\newtheorem{theorem}{Theorem}
\newtheorem{corollary}[theorem]{Corollary}
\newtheorem{definition}[theorem]{Definition}
\newtheorem{lemma}[theorem]{Lemma}
\theoremstyle{remark}
\newtheorem{remark}[theorem]{Remark}
\def\be{\begin{equation}}
\def\ee{\end{equation}}
\def\bea{\begin{eqnarray}}
\def\eea{\end{eqnarray}}
\def\hsm1{\hspace{-1mm}}
\providecommand\underarrow@[3]{%
  \vtop{\ialign{##\crcr$\m@th\hfil#2#3\hfil$\crcr
  \noalign{\nointerlineskip\kern.12\baselineskip}#1#2\crcr}}}
\providecommand{\underrightarrow}{%
  \mathpalette{\underarrow@\rightarrowfill@}}
\providecommand\rightarrowfill@{\arrowfill@\relbar\relbar\rightarrow}
\providecommand\arrowfill@[4]{%
  $\m@th\thickmuskip0mu\medmuskip\thickmuskip\thinmuskip\thickmuskip
   \relax#4#1\mkern-7mu%
   \cleaders\hbox{$#4\mkern-2mu#2\mkern-2mu$}\hfill
   \mkern-7mu#3$%
}
\begin{document}

\newcommand{\correct}[1]{#1}
\newcommand{\correction}[3]{#3}

\title[Existence of the Lorenz attractor in the Shimizu-Morioka system]{Computer assisted proof of the existence of the Lorenz attractor in the Shimizu-Morioka system}
	
\author{Maciej J. Capi\'nski \footnote[1]{This work was partially supported by the Polish
National Science Centre grants 2015/19/B/ST1/01454, 2016/21/B/ST1/02453 and by the Faculty of Applied Mathematics AGH UST statutory tasks 11.11.420.004 within subsidy of Ministry of Science and Higher Education.}}

\address{Faculty of Applied Mathematics, AGH University of Science and Technology, Mickiewicza 30, 30-059 Krak\'{o}w, Poland}
\ead{maciej.capinski@agh.edu.pl}

\author{Dmitry Turaev \footnote[2]{This work was supported by the RSF grant 14-41-00044. The support of EPSRC is gratefully acknowledged.}}

\address{Imperial College, London SW7 2AZ, United Kingdom, \\
and Lobachevsky University of Nizhni Novgorod, pr. Gagarina 23, 603950, Nizhny Novgorod, Russia}
\ead{d.turaev@imperial.ac.uk}

\author{Piotr~Zgliczy\'nski \footnote[3]{This work was supported by the Polish
National Science Centre grant 2015/19/B/ST1/01454}}

\address{Institute of Computer Science,
Faculty of Mathematics and Computer Science, Jagiellonian University,
 ul. S. {\L}ojasiewicza 6, 30-348 Krak\'ow, Poland}

\ead{umzglicz@cyf-kr.edu.pl, piotr.zgliczynski@ii.uj.edu.pl}

\begin{abstract}
We prove that the Shimizu-Morioka system has a Lorenz attractor for an open set of parameter values. For the proof we employ a criterion proposed by Shilnikov, which allows to conclude the existence of the attractor by examination of the behaviour of only one orbit. The needed properties of the orbit are established by using computer assisted numerics. Our result is also applied to the study of local bifurcations of triply degenerate periodic points of three-dimensional maps. It provides a formal proof of the birth of discrete Lorenz attractors at various global bifurcations.
\end{abstract}


\section{Introduction and main results}
In this paper we provide a solution to a long-standing open problem. We prove, by employing
rigorous numerics, that Shimizu-Morioka system has a Lorenz attractor for an open set of parameter
values. 
\correction{1}{1}{The fact itself is well-known, see \cite{ASh1,ASh2,SST}; however its rigorous proof was missing
which created a formal obstacle to some further developments in the mathematical theory of homoclinic bifurcations (see remarks in Section \ref{dla}). }
Here, we report a computer assisted proof which closes this problem. \correction{2}{2}{}
\correction{3}{3}{Note that the use of computer assistance is sufficiently mild here,
since we employ a Shilnikov criterion that allows us to conclude the existence of the Lorenz attractor in our system by examination of the behavior of only a single orbit of the system (see Sections \ref{shc} and \ref{numres}).}

The Shimizu-Morioka system
\begin{equation}\label{1}
\left\{\begin{array}{l} \dot x=y, \\ \dot y= (1-z) x -\lambda y, \\ \dot z= -\alpha z + x^2,
\end{array}\right.
\end{equation}
where $(x,y,z)$ are coordinates in $\mathbb{R}^3$ and $\alpha>0$, $\lambda>0$ are parameters,
was introduced by T.Shimizu and N.Morioka in \cite{SM} and extensively studied numerically by A. Shilnikov in \cite{ASh1,ASh2}.
One of the main findings was that there is a large open region in the $(\alpha,\lambda)$-plane where this system has a strange attractor very similar to the classical attractor of the Lorenz model (\ref{lor}), see Fig.~\ref{MSfig1}. This is more than just the similarity in shape: as one can infer from the
pictures numerically obtained in \cite{ASh1}, the Poincar\'e map on a two-dimensional cross-section is hyperbolic (very strongly contracting in one direction and expanding in the other
direction), so the attractor in the Shimizu-Morioka system can be described by the Afraimovich-Bykov-Shilnikov geometric Lorenz model \cite{ABS77,ABS82}, i.e., it is a Lorenz attractor.

\begin{figure}[tbp]
\begin{center}
\includegraphics[height=4.4cm]{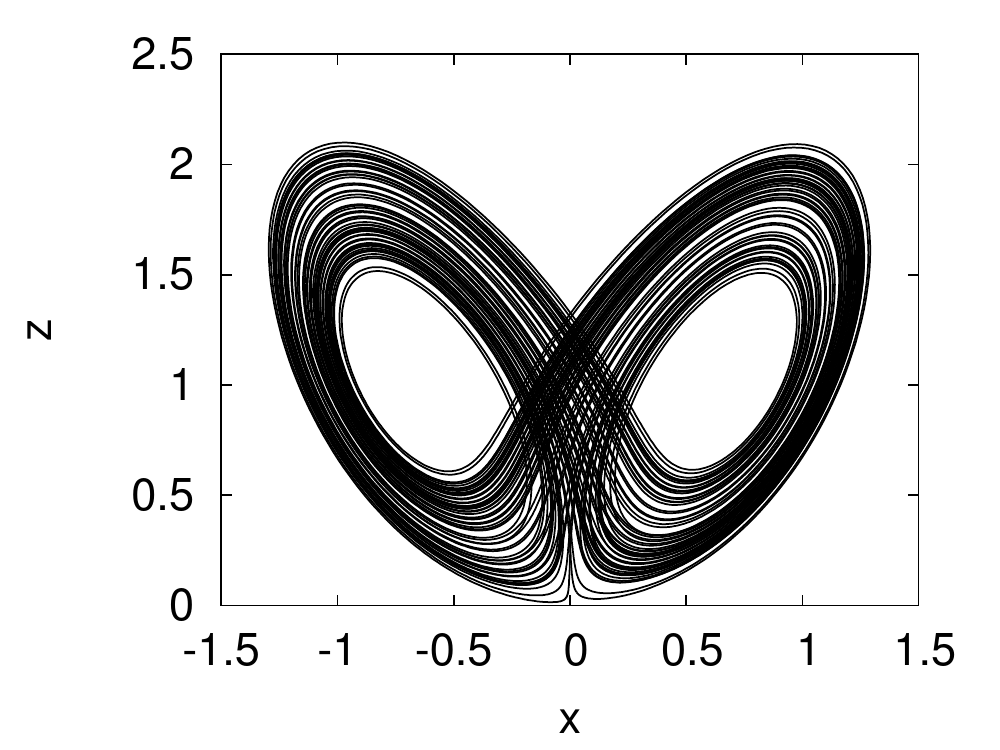}\hspace{0.2cm}\includegraphics[height=4.4cm]{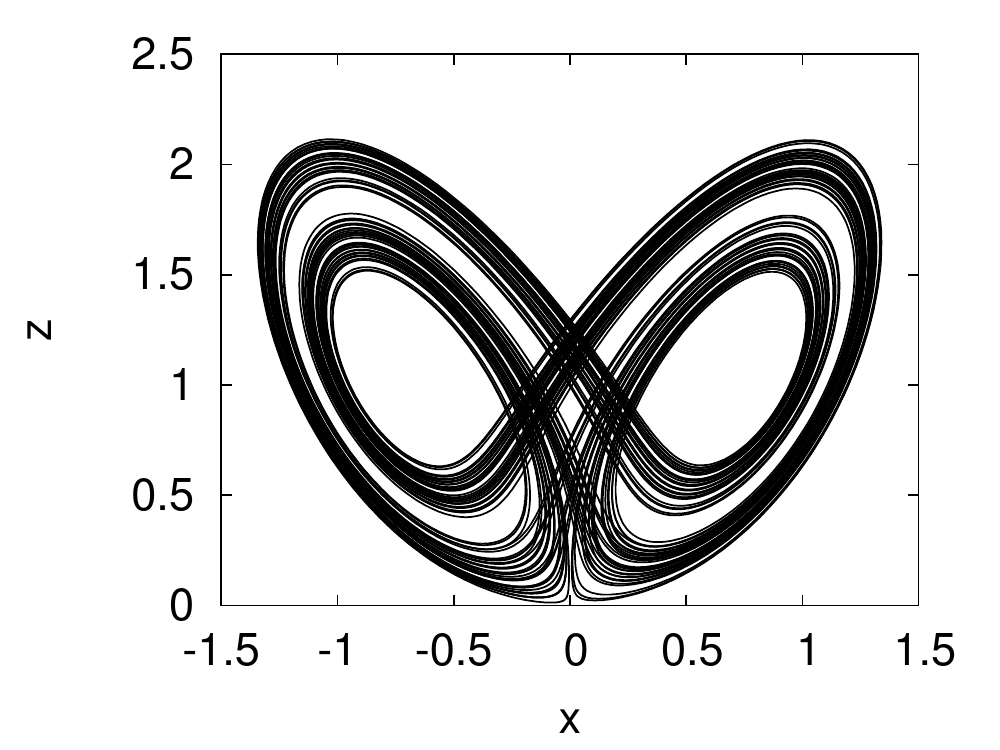}
\end{center}
\caption{Two kinds of Lorenz attractors in the Shimizu-Morioka system: standard (for $\alpha=0.45$, $\lambda=0.9$) on the left, and with a lacuna (for $\alpha=0.5$, $\lambda=0.85$) on the right.}
\label{MSfig1}
\end{figure}

We will describe the geometric Lorenz model and give the corresponding definition of the Lorenz attractors
in a moment. However, we first want to stress that Shimizu-Morioka model is special (and maybe more important than the classical Lorenz model). The reason is that system (\ref{1})
is a truncated normal form for certain codimension-3 bifurcations of equilibria and periodic
orbits \cite{SST,PST}. Because Lorenz or Lorenz-like attractors persist at small perturbations, our result on the existence of the Lorenz attractor in the normal form system (\ref{1}) implies that Lorenz-like attractors
exist for any small perturbation of (\ref{1}). This proves the emergence of
the Lorenz attractor (or its discrete analogue) in the corresponding class of codimension-3 bifurcations.
In particular, in this paper, using Theorem~\ref{thm:main}, we prove the existence of discrete Lorenz attractors in a class of three-dimensional polynomial maps (3D H\'enon maps, see Theorem~\ref{thm:hen}).

\subsection{Pseudohyperbolicity}
In order to talk about Lorenz attractors, we need a proper definition of them. \correction{4}{4}{Classical definitions go back to the Guckenheimer-Williams \cite{G76,GW,W79,Ro} and Afraimovich-Bykov-Shilnikov \cite{ABS77,ABS82} geometric models; modern generalisations can be found in \cite{MPP}. }Here we use
the Afraimovich-Bykov-Shilnikov model for Lorenz attractors, which we describe using the notion of {\em pseudo-hyperbolicity} introduced in \cite{TSH98,TSH07}. A system (a smooth flow or a diffeomorphism) in $\mathbb{R}^n$
is called pseudo-hyperbolic in a strictly forward-invariant domain ${\cal D}\subseteq \mathbb{R}^n$ if:

1) there are directions in which the dynamical system (a flow or a diffeomorphism) is strongly contracting
(``strongly'' means that any possible contraction in transverse directions is always strictly weaker);

2) transverse to the contracting directions the system is volume-expanding (i.e. the volume is stretched
exponentially).

In precise terms, condition 1 reads as follows. For each point of $\cal D$, we assume that there exists a pair
of transversal subspaces $N_1$ and $N_2$ (with $dim(N_2) = k \geq1$ and $dim(N_1) =
n-k$), continuously depending on the point, such that the families of these subspaces
are invariant with respect to the derivative $DX_t$ of the time-$t$ map $X_t$ of the system,
i.e., $D X_t N_1(x) = N_1(X_t(x))$ and $DX_t N_2(x) = N_2(X_t(x))$ for all $t\geq 0$ (in the case of a diffeomorphism, $t$ runs
all positive integer values). We also assume that
there exist constants $C>0$, $\alpha>0$ and $\beta>0$ such that for each $x\in \cal D$ and all $t\geq 0$
\begin{equation}\label{contr}
\|DX_t(x)|_{N_2}\|\leq C e^{-\alpha t}
\end{equation}
and
\begin{equation}\label{doms}
\|DX_t(x)|_{N_2}\| \cdot \|(DX_t(x)|_{N_1})^{-1}\|\leq C e^{-\beta t}.
\end{equation}
The volume-expansion condition 2 reads as follows: there exist constants $C>0$ and $\sigma>0$ such that
for each $x\in \cal D$ and all $t\geq 0$
\begin{equation}\label{expv}
det(DX_t(x)|_{N_1}) \geq C e^{\sigma t}.
\end{equation}

Inequality (\ref{doms}) (the so-called cone condition) ensures that the invariant families of subspaces $N_1$ and
$N_2$ continuously persist for all $C^1$-small perturbations of the system, so the pseudohyperbolic structure is a robust
property of the system. Inequality (\ref{expv}) guarantees that for every orbit in $\cal D$ its maximal Lyapunov exponent
is positive. Therefore, the existence of the pseudohyperbolic structure in a bounded domain $\cal D$ ensures chaotic dynamics in $\cal D$, which cannot be destroyed by small smooth perturbations.

Conditions (\ref{contr}),(\ref{doms}) imply also the existence of a strong-stable invariant foliation ${\cal N}^{ss}$ in $\cal D$,
whose field of tangents is the family $N_2$. For every two points in the same leaf of ${\cal N}^{ss}$ the distance between their
forward orbits tends to zero exponentially, i.e., the forward dynamics of all points in the same leaf are the same.

\subsection{Lorenz attractor}\label{abs}
The Lorenz attractor of the Afraimovich-Bykov-Shilnikov model is the attractor of a pseudohyperbolic system of differential
equations with $dim (N_1)=2$.
Specifically, consider a system $X$ of differential equations with a saddle equilibrium state $O$. Assume that $O$ has a one-dimensional unstable manifold $W^u(O)$ and an $(n-1)$-dimensional stable manifold $W^s(O)$, i.e., if $\lambda_1,\dots,\lambda_n$ are
the eigenvalues of the linearized system at $O$, then $\lambda_1>0$ and $Re \lambda_j <0$ for $j\geq 2$. Assume that
$$\lambda_2 > Re\lambda_j \quad \mbox{   for   } j\geq 3$$
and
$$\lambda_1+\lambda_2 >0.$$
This means that the pseudohyperbolicity conditions (\ref{contr}), (\ref{doms}) and (\ref{expv}) are fulfilled at the point $O$,
with $N_1$ being the two-dimensional eigenspace corresponding to the eigenvalues $\lambda_1$ and $\lambda_2$ and $N_2$ being the eigenspace corresponding to the rest of eigenvalues. We assume that the pseudohyperbolicity property also holds
in a sufficiently large, bounded neighborhood $\cal D$ of $O$. We assume that $\cal D$ is strictly forward-invariant, i.e., there exists $T>0$ such that the image of the closure of $\cal D$ by the time-$T$ map lies strictly inside $\cal D$.

Moreover, we assume  that in $\cal D$ there exists an $(n-1)$-dimensional cross-section $\Pi$ to the flow, such that for every point in $\cal D$  its forward orbit either tends to $O$ (i.e., it lies in $W^s(O)$) or hits $\Pi$ at some moment of time. We assume that
$\Pi$ is divided by a smooth $(n-2)$-dimensional surface $\Pi_0$ into 2 halves, $\Pi_+$ and $\Pi_-$, such that the orbits starting
in $\Pi_+$ and $\Pi_-$ return to $\Pi$ again, while the orbits starting in $\Pi_0$ tend to $O$ as $t\to+\infty$, i.e.,
\correction{5}{5}{$\Pi_0\subset W^s(O)\cap \Pi$}. Thus, the orbits of $X$ define the Poincar\'e map $T:\Pi_+\cup\Pi_-\to \Pi$.

This map is smooth \correction{6}{6}{outside the }discontinuity surface $\Pi_0$. The orbits starting close to $\Pi_0$ pass near $O$, so the return time
tends to infinity as the initial point $M$ tends to $\Pi_0$; note that $TM$ tends to one of the two points, $M_+$ and $M_-$, where $W^u(O)$ intersects $\Pi$. The unstable manifold $W^u(O)$ is one-dimensional, so $W^u(O)\backslash O$ consists of exactly two orbits, $\Gamma_+$ and $\Gamma_-$, called {\em separatrices}, and the points $M_+$ and $M_-$ are the first points where the $\Gamma_+$ and $\Gamma_-$ intersect $\Pi$. We take the convention that $\lim_{M\to\Pi_0} T(M)$ equals to $M_+$ if the initial point $M$ approaches $\Pi_0$ from $\Pi_+$ and $M_-$ if $M$ approaches $\Pi_0$ from $\Pi_-$.

The invariant foliation ${\cal N}^{ss}$ of the system $X$ also corresponds to a codimension-1 strong-stable invariant foliation
${\cal N}^{ss}_\Pi$ for the map $T$: the leaves of ${\cal N}^{ss}_\Pi$ are obtained as intersections with $\Pi$ of the orbits of the leaves of ${\cal N}^{ss}$ by the flow. The foliation is contracting, i.e., the iterations by $T$ of any two points in the same leaf of
${\cal N}^{ss}_\Pi$ converge exponentially to each other. However, the dynamics transverse to the strong-stable foliation are chaotic. The expansion of areas by the flow transverse to ${\cal N}^{ss}$ implies that the Poincar\'e map $T$ is uniformly expanding in the direction transverse to ${\cal N}^{ss}_\Pi$.

In other words, the pseudohyperbolicity of the flow in $\cal D$ implies that the Poincar\'e map $T$ is locally uniformly hyperbolic.
However, one cannot directly apply the theory of uniform hyperbolicity to this map, as it has a singularity (at $\Pi_0$). Nevertheless, the (singular) hyperbolicity of the Poincar\'e map $T$ makes a comprehensive analysis of the structure of the Lorenz attractor in the above described Afraimovich-Bykov-Shilnikov model possible, as it was done in \cite{ABS82}. The results of these papers can be summarized as follows \cite{OT17}:

\begin{theorem}
 \cite{ABS82}  Under conditions above, the set of non-wandering orbits of the system in $\cal D$ consists of a uniquely defined, two-dimensional
closed invariant set ${\cal A}\subset {\cal D}$ (which is called the Lorenz attractor) and a (possibly empty) one-dimensional
closed invariant set $\Sigma$ (which may intersect $\cal A$ but is not a subset of $\cal A$) such that\\
(1) the separatrices $\Gamma_+$ and $\Gamma_-$ and the saddle $O$ lie in $\cal A$;\\
(2) $\cal A$ is transitive, and saddle periodic orbits are dense in $\cal A$;\\
(3) $\cal A$ is the limit of a nested sequence of hyperbolic, transitive, compact invariant sets each of
which is equivalent to a suspension over a finite Markov chain with positive topological
entropy;\\
(4) $\cal A$ is structurally unstable: arbitrarily small smooth perturbations of the system lead to
the creation of homoclinic loops to $O$ and to the subsequent birth and/or disappearance
of saddle periodic orbits within $\cal A$;\\
(5) when $\Sigma=\emptyset$, the set $\cal A$ is the maximal attractor in $\cal D$;\\
(6) when the set $\Sigma$ is non-empty, it is a hyperbolic set equivalent to a suspension over a finite
Markov chain (it may have zero entropy, e.g. be a single saddle periodic orbit);\\
(7) every forward orbit in $\cal D$ tends to ${\cal A} \cup \Sigma$ as $t\to+\infty$;\\
(8) when $\Sigma\neq \emptyset$, the maximal attractor in $\cal D$ is \correction{7}{7}{$cl(W^u(\Sigma)) = {\cal A} \cup W^u(\Sigma)$};\\
(9) $A$ attracts all orbits from its neighbourhood when \correction{8}{8}{${\cal A} \cap \Sigma = \emptyset$}.
\end{theorem}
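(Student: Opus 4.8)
The plan is to reduce the three-dimensional flow dynamics in $\mathcal{D}$ to a one-dimensional expanding map with a single discontinuity, and then to transfer the conclusions (1)--(9) from this model map back to the flow. The first step is to quotient by the strong-stable foliation. Pseudohyperbolicity of $X$ in $\mathcal{D}$, i.e.\ (\ref{contr}) and (\ref{doms}), gives the strong-stable foliation $\mathcal{N}^{ss}$, whose trace $\mathcal{N}^{ss}_\Pi$ on the cross-section $\Pi$ is a $T$-invariant contracting foliation by codimension-one leaves. Collapsing each leaf to a point turns $\Pi$ into an interval $I$, the discontinuity surface $\Pi_0$ into an interior point $c\in I$, and the Poincar\'e map $T$ into a map $f:I\setminus\{c\}\to I$ whose one-sided limits $f(c^-)$ and $f(c^+)$ are the images under the collapsing of the separatrix intersection points $M_-$ and $M_+$. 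The area-expansion of the flow transverse to $\mathcal{N}^{ss}$, condition (\ref{expv}), together with the contraction (\ref{contr}), translates into $|f'|>1$ away from $c$, with $|f'|\to\infty$ as the argument tends to $c$ because the return time blows up near $\Pi_0$. One checks that $f$ extends continuously from each side up to $c$ and that the pair $f(c^\pm)$ governs the whole dynamics.

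The second step is to analyse the interval map $f$ via kneading theory. For a piecewise-expanding map of the interval with a single discontinuity the non-wandering set is well understood: it splits into a transitive piece $A_f$ that contains the forward orbits of $c^\pm$ and carries positive topological entropy (expansion, plus the fact that $\Gamma_\pm$ return to $\Pi$, forces the orbits of $c^\pm$ to fill out a subinterval), and a possibly empty hyperbolic set $\Sigma_f$ of zero or positive entropy consisting of points whose orbits avoid a neighbourhood of $c$. Density of periodic points, transitivity of $A_f$, and the representation of $A_f$ as an increasing limit of subshifts of finite type all follow from the standard picture: one fixes a finite partition of $I$ by preimages of $c$, which is "almost Markov", and the genuinely Markov subsystems supported on shrinking neighbourhoods of the complement of $c$ exhaust $A_f$. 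The structural instability, item (4), comes from the observation that a homoclinic loop to $O$ corresponds exactly to an orbit of $c^\pm$ returning to $c$, i.e.\ $f^n(c^\pm)=c$; such parameter values are dense because moving $f(c^\pm)$ by an arbitrarily small amount changes the (expanding, hence sensitive) itinerary and can be tuned to hit $c$.

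The third step is to lift back. Since $\mathcal{N}^{ss}$ is genuinely invariant and contracting, every $f$-periodic orbit lifts to a unique $T$-periodic orbit of saddle type, every $f$-invariant transitive set lifts to a $T$-invariant transitive set, entropy is preserved, and suspending by the flow recovers the corresponding invariant sets of $X$ in $\mathcal{D}$. One sets $\mathcal{A}$ to be the suspension of the lift of $A_f$ (necessarily containing $O$ and $\Gamma_\pm$, since $f(c^\pm)$, hence $M_\pm$, lie in the lift) and $\Sigma$ the suspension of the lift of $\Sigma_f$; items (1)--(3) are then immediate. For (5)--(9): every forward orbit in $\mathcal{D}$ either falls into $W^s(O)$ or hits $\Pi$ infinitely often with a shadowed itinerary, so its $\omega$-limit set lies in $\mathcal{A}\cup\Sigma$, giving (7); when $\Sigma_f=\emptyset$ every point of $I$ is attracted to $A_f$, so $\mathcal{A}$ is the maximal attractor, giving (5); when $\Sigma_f\neq\emptyset$, $\mathcal{A}\cup W^u(\Sigma)$ is the maximal attractor and $\mathcal{A}$ alone attracts a neighbourhood exactly when $\mathcal{A}\cap\Sigma=\emptyset$, because otherwise orbits near $\Sigma$ escape every small neighbourhood of $\mathcal{A}$ along $W^u(\Sigma)$, giving (6), (8), (9).

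The main obstacle I expect is making the quotient step rigorous: one needs enough regularity of the strong-stable foliation (at least an absolutely continuous, effectively $C^1$ holonomy) to define $f$ and to verify $|f'|>1$ uniformly off $c$, and one must control the map at the discontinuity -- ruling out wandering intervals and stable periodic orbits of $f$, and confirming that the one-sided limits $f(c^\pm)$ are precisely the separatrix images -- since all the delicate structure of $\mathcal{A}$, and especially the structural-instability statement, is driven by the behaviour of those two points. Once the one-dimensional picture is in place, the remaining work is bookkeeping: propagating it through the foliation quotient and the suspension.
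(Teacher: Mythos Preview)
The paper does not prove this theorem. It is stated with the citation \cite{ABS82} and introduced as a summary (``The results of these papers can be summarized as follows \cite{OT17}''); no argument is given, and the remainder of the paper uses it as a black box. So there is no ``paper's own proof'' to compare against.

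That said, your outline is a fair sketch of the classical Afraimovich--Bykov--Shilnikov / Guckenheimer--Williams approach that underlies the cited result: quotient the cross-section by the strong-stable foliation to obtain a one-dimensional map with a single discontinuity, analyse that map (expansion, kneading data of the two one-sided images of the discontinuity, Markov approximations), and then lift back through the foliation and the suspension. The identification of the separatrix images $M_\pm$ with the one-sided limits $f(c^\pm)$, the role of $f^n(c^\pm)=c$ in producing homoclinic loops, and the dichotomy between the transitive piece and a residual hyperbolic set avoiding a neighbourhood of $c$ are all the right ingredients. Your ``main obstacle'' paragraph is also on point: in the original works the regularity of the strong-stable foliation and the resulting smoothness/expansion of the quotient map are exactly where the analytic work lies, and the ABS papers impose and verify explicit conditions on the Poincar\'e map (rather than deducing everything from abstract pseudohyperbolicity) precisely to make this step go through.
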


In \cite{ABS82} the pseudohyperbolicity conditions were expressed in a different (equivalent) form, as explicitly verifiable conditions that ensure the hyperbolicity of the Poincar\'e map $T$. Similar hyperbolicity conditions were checked for the classical Lorenz model
\begin{equation}\label{lor}
\dot x= -10(x-y), \qquad \dot y=x(28-z)-y, \qquad \dot z= -\frac{8}{3} z+ xy
\end{equation}
by W.Tucker, with the use of rigorous numerics. In this way, in \cite{WT1,WT2}, a computer assisted proof of the existence of the Lorenz attractor in system (\ref{lor}) was done.

\subsection{Shilnikov criterion}\label{shc}
In our approach to the proof of the existence of the Lorenz attractor in the \correction{9}{9}{Shimizu-Morioka }system we also rely on the computer assistance, however we need much less computations. We use a criterion proposed by Shilnikov in \cite[p. 240-241]{ShC} that allows to show the existence of the Lorenz attractor by examination of the behavior of only one orbit (a separatrix) of the system of differential equations, instead of the direct check of the hyperbolicity of the Poincar\'e map $T$ which would require a high precision computation of a huge number of orbits.

In \cite[p. 240-241]{ShC}\label{corrected-citation}, several criteria for the birth of the Lorenz attractor were proposed. We use the following one. Consider a system
$X$ of differential \correction{10}{10}{equations }in $\mathbb{R}^n$, which have a saddle equilibrium state $O$ with one-dimensional unstable manifold. Namely, let
$\lambda_1,\dots,\lambda_n$ are the eigenvalues of the linearized system at $O$. We assume that
$$\lambda_1>\; 0\; > \lambda_2 > Re\lambda_j \quad \mbox{   for   } j\geq 3.$$
Let the system be symmetric with respect to a certain involution $\cal R$ and that $O$ is a symmetric equilibrium, i.e.,
${\cal R} O=O$. The eigenvectors $e_1$ and $e_2$ corresponding to the eigenvalues $\lambda_1$ and $\lambda_2$
must be $\cal R$-invariant. We assume that ${\cal R} e_1=- e_1$ and ${\cal R} e_2 = e_2$. This, in particular,
implies that the two unstable separatrices $\Gamma_+$ and $\Gamma_-$, which are tangent at $O$ to $e_1$,
are symmetric to each other, $\Gamma_+ = {\cal R}\Gamma_-$.

Let the system satisfy the following 3 conditions:

1. \correction{11}{11}{Assume that both separatrices $\Gamma_+$ and $\Gamma_-$ return to O as $t\to+\infty$
and are tangent to the vector $e_2$ when entering $O$ (it follows from the symmetry that they form a ``homoclinic butterfly'', i.e., they
enter $O$ from the same direction).}

2. Assume that the so-called {\em saddle value} $\sigma=\lambda_1+\lambda_2$ is zero.

3. Assume that the so-called {\em separatrix value} $A$ satisfies the condition
\begin{equation}\label{maincheck}
0 < |A| < 2.
\end{equation}
The definition of the separatrix value for our case is given in Section \ref{numres}, see (\ref{eqa}).
One can describe $|A|$ as the maximal extent, to which infinitesimal two-dimensional areas can be expanded by the system along the homoclinic loop (the sign of $A$ determines whether the orientation of the areas for which this maximal expansion is achieved
is changed during the propagation along the loop); more about the definition of $A$ can be seen in \cite{Rob1,Rob2, book}.

According to \cite[p. 240-241]{ShC}, bifurcations of systems satisfying the above described conditions lead to the birth of a Lorenz attractor.
To make this statement precise, we note that conditions 1 and 2 describe a codimension-2 bifurcation in the class of
$\cal R$-symmetric systems. Suppose the system is embedded into a two-parameter family of systems $X_{\mu,\varepsilon}$ such that by changing the parameters $\mu$ and $\varepsilon$ we can independently vary the saddle value $\sigma$ near zero and
split the homoclinic loop $\Gamma_+$ (by the symmetry, the homoclinic loop $\Gamma_-$ will be split as well). Then we have

\begin{theorem} \cite[p. 240-241]{ShC}  If condition (\ref{maincheck}) holds at the bifurcation moment (when the system
 $X_{\mu,\varepsilon}$ has a homoclinic butterfly with a zero saddle value), then there exists an open region in the plane of parameters
$\mu,\varepsilon$, for which system $X_{\mu,\varepsilon}$ has a Lorenz attractor of the Afraimovich-Bykov-Shilnikov model.
\end{theorem}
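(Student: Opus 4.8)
The plan is to reduce the flow near the homoclinic butterfly to a Poincar\'e first-return map on a cross-section and to verify for that map the hypotheses of the Afraimovich--Bykov--Shilnikov geometric model, so that the theorem of Section~\ref{abs} applies directly. First I would choose local coordinates $(u,v,w)\in\Rset\times\Rset\times\Rset^{n-2}$ near $O$, with $u$ along $e_1$, $v$ along $e_2$, and $w$ along the strong-stable subspace, normalised (by a sufficiently smooth local change of variables of the type available for such saddles) so that $W^s_{loc}(O)=\{u=0\}$, $W^u_{loc}(O)=\{v=0,\ w=0\}$, and the strong-stable foliation is $\{(u,v)=\mathrm{const}\}$. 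I take an incoming cross-section $\Pi^{in}=\{v=\delta\}$ with coordinates $(u,w)$ and trace $\Pi_0^{in}=\{u=0\}=W^s_{loc}\cap\Pi^{in}$, and an outgoing cross-section $\Pi^{out}=\{u=\delta\}$ with coordinates $(v,w)$. Integrating the local flow and using the standard saddle estimates, the local transition map $T_{loc}\colon\Pi^{in}\setminus\Pi_0^{in}\to\Pi^{out}$ has the form
\[
T_{loc}(u,w)=\Bigl(\ \delta\,(u/\delta)^{\nu}\bigl(1+o(1)\bigr),\ \ o\!\left(|u|^{\nu}\right)\ \Bigr),\qquad \nu=-\lambda_2/\lambda_1,
\]
the second component being $o(|u|^{\nu})$ precisely because $\mathrm{Re}\,\lambda_j<\lambda_2$ for $j\ge3$. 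The decisive feature is that $\nu=1$ exactly at the bifurcation moment, since $\sigma=\lambda_1+\lambda_2=0$; hence along the local arc of the loop two-dimensional areas spanned by $e_1,e_2$ are asymptotically preserved and $T_{loc}$ is affine in the leading coordinate up to corrections that are only logarithmically small, whereas when $\varepsilon$ shifts $\sigma$ to positive values one has $\nu<1$ and the derivative of $T_{loc}$ blows up at $\Pi_0^{in}$.

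Next I would compose $T_{loc}$ with the global transition maps $T^{\pm}_{glob}\colon\Pi^{out}\to\Pi^{in}$ along $\Gamma_{\pm}$, which are diffeomorphisms defined near the points where the separatrices leave the neighbourhood of $O$; by the assumed ${\cal R}$-symmetry one has $T^{-}_{glob}={\cal R}\circ T^{+}_{glob}\circ{\cal R}$, so it suffices to describe one of them and extend the resulting return map by oddness. Inserting the two-parameter unfolding $X_{\mu,\varepsilon}$, with $\varepsilon$ governing $\sigma$ (hence $\nu$) and $\mu$ splitting the loop $\Gamma_{+}$, and passing to coordinates on $\Pi^{in}$ adapted to the split, one obtains a two-parameter family of singular, ${\cal R}$-symmetric return maps of Lorenz type,
\[
T_{\mu,\varepsilon}(x,y)=\Bigl(\ \mathrm{sign}(x)\bigl(\mu+A\,|x|^{\nu}\bigl(1+o(1)\bigr)\bigr)+O(\|y\|),\ \ \Psi_{\pm}(x,y)\ \Bigr),\qquad \pm x>0,
\]
with a single discontinuity on $\Pi_0^{in}=\{x=0\}$, where $\Psi_{\pm}$ is uniformly contracting in $y$ with $x$-derivative $o(|x|^{\nu})$, and $A$ is exactly the separatrix value --- the leading coefficient of the affine part at $\nu=1$, equal to the total two-dimensional area expansion accumulated along the homoclinic loop, with its sign recording whether the orientation is reversed.

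Then I would verify the geometric-model hypotheses for $T_{\mu,\varepsilon}$ on an open subset of the $(\mu,\varepsilon)$-plane accumulating on $(0,0)$ from the side $\sigma\ge0$. The uniform $y$-contraction of $\Psi_{\pm}$, dominated by the $x$-expansion, yields a continuous contracting strong-stable invariant foliation ${\cal N}^{ss}_\Pi$ for $T_{\mu,\varepsilon}$; quotienting it out produces a one-dimensional Lorenz map $f_{\mu,\varepsilon}(x)=\mathrm{sign}(x)\bigl(\mu+A|x|^{\nu}(1+o(1))\bigr)$ with a single discontinuity at $0$ and with the two points $M_{\pm}$ as its one-sided limits at $0$. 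On the critical slice $\sigma=0$ the rescaling $x=|\mu|X$ turns $f_{\mu,\varepsilon}$ into the $\mu$-independent affine model $g(X)=A\,X-\mathrm{sign}(X)$ on the fixed interval $[-1,1]$ (for the sign of $\mu$ that realises the Lorenz-type gap at the discontinuity); $g$ is expanding with constant slope $|A|$ and maps $[-1,1]$ into itself with the tips $M_{\pm}$ at its endpoints precisely when $|A|<2$, so this self-map condition is the source of the upper bound in (\ref{maincheck}), while the lower bound $|A|>0$ guarantees that the affine part does not degenerate and, together with the blow-up of $f'$ near the discontinuity for $\nu<1$, that the required expansion holds on the invariant interval. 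Since $T_{\mu,\varepsilon}$ is a small perturbation of this model for $(\mu,\varepsilon)$ close to $(0,0)$, all the conditions of the Afraimovich--Bykov--Shilnikov model hold throughout an open parameter region, and the theorem quoted in Section~\ref{abs} then produces the Lorenz attractor there.

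The step I expect to be the genuine obstacle is establishing the asymptotics of the local transition map $T_{loc}$ sharply and, above all, \emph{uniformly} in the resonant regime $\nu\approx1$: there $T_{loc}$ is only finitely smooth and carries logarithmic corrections, and one must control these corrections and their dependence on $(\mu,\varepsilon)$ well enough to guarantee that the expansion estimate $|f'_{\mu,\varepsilon}|>1$ and the self-map property of $f_{\mu,\varepsilon}$ on its invariant interval survive on an honestly open --- not merely residual or full-measure --- set of parameters. It is precisely this quantitative bookkeeping near the resonance that pins down the sharp threshold $0<|A|<2$; the remaining ingredients --- the strong-stable foliation, the reduction to the one-dimensional map, and the final appeal to the Afraimovich--Bykov--Shilnikov theorem --- are comparatively standard.
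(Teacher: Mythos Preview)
The paper does not prove this theorem. Immediately after stating it, the authors write: ``A proof of this result was given by Robinson in \cite{Rob1,Rob2} under certain additional assumptions on the eigenvalues $\lambda$; in full generality this theorem was proven in \cite{OT17}.'' The theorem is quoted as an external input, and the paper's own contribution lies in verifying its hypotheses (Theorems~\ref{th:homoclinic} and~\ref{th:sep-val}) for the Shimizu--Morioka system. So there is no proof in the paper to compare your proposal against.

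That said, your outline is essentially the route taken in the cited references: local passage map near the saddle with exponent $\nu=-\lambda_2/\lambda_1$, composition with the global diffeomorphisms along $\Gamma_\pm$, reduction via the strong-stable foliation to a one-dimensional Lorenz map with discontinuity at $0$, and verification of the Afraimovich--Bykov--Shilnikov conditions on a parameter wedge. Your identification of $|A|<2$ with the self-mapping condition for the rescaled interval is correct, as is your flagging of the resonant asymptotics near $\nu=1$ as the genuine technical obstacle.

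One point to be careful about: your treatment of the slice $\sigma=0$ is slightly misleading. There the rescaled map $g(X)=AX-\mathrm{sign}(X)$ has constant slope $|A|$, which is \emph{not} expanding when $|A|<1$; so the Lorenz attractor does not sit on the line $\sigma=0$ and one cannot argue by perturbation off that slice. The open region lies strictly on the side $\sigma>0$ (i.e.\ $\nu<1$), where the derivative of the one-dimensional factor blows up at the discontinuity, and the role of the condition $|A|>0$ is to control the behaviour away from $x=0$ as the invariant interval scales with $\mu$. This is exactly the ``quantitative bookkeeping'' you anticipate, and it is where the work in \cite{Rob1,Rob2,OT17} is concentrated.
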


A proof of this result was given by Robinson in \cite{Rob1,Rob2} under certain additional assumptions on the eigenvalues $\lambda$; in full generality this theorem was proven in \cite{OT17}.

We show in Theorems~\ref{th:homoclinic} and \ref{th:sep-val} that Shimizu-Morioka system (\ref{1}) satisfies conditions 1-3 for some value of parameters $(\alpha,\lambda)$ and that the homoclinic loops can be split and the saddle
value can be varied independently when $\alpha$ and $\lambda$ vary. This, as explained, implies our main result:

\begin{theorem}\label{thm:main}
There exists an open set in the plane of parameters $(\alpha,\lambda)$ for which the Shimizu-Morioka system has
a Lorenz attractor.
\end{theorem}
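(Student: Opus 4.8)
The plan is to bring the Shimizu-Morioka system under the Shilnikov criterion recalled above (the theorem from \cite[p.~240-241]{ShC}, proved in full in \cite{OT17}) and then simply quote it; the genuine content is pushed into the auxiliary Theorems~\ref{th:homoclinic} and \ref{th:sep-val}, which are the rigorous-numerics statements. First I would record the linear data and symmetry. System (\ref{1}) is equivariant under the involution $\mathcal{R}:(x,y,z)\mapsto(-x,-y,z)$, and $O=0$ is an $\mathcal{R}$-symmetric equilibrium. The $z$-axis is invariant, with eigenvalue $-\alpha$ and eigenvector $e_2=(0,0,1)$ fixed by $\mathcal{R}$, while the $(x,y)$-block has eigenvalues $\lambda_{1,3}=\frac{1}{2}(-\lambda\pm\sqrt{\lambda^2+4})$ and eigenvectors in $\{z=0\}$; putting $e_1=(1,\lambda_1,0)$ one has $\mathcal{R}e_1=-e_1$ and $\mathcal{R}e_2=e_2$. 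For $\alpha$ in the relevant range (in particular everywhere on the curve introduced below, where $\alpha<1$) the ordering $\lambda_1>0>\lambda_2=-\alpha>\lambda_3$ holds, so $O$ is a saddle with one-dimensional unstable manifold and leading stable direction $e_2$, and the $z$-axis lies in $W^s(O)$. Thus all the structural hypotheses of the criterion are satisfied. Moreover the saddle value is $\sigma=\lambda_1+\lambda_2=\lambda_1-\alpha$, which vanishes exactly on the explicit curve $\Sigma_0=\{(\alpha,\lambda):\alpha(\alpha+\lambda)=1\}$; this reduces conditions 1--2 to a one-parameter search along $\Sigma_0$.

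Next, by Theorem~\ref{th:homoclinic} there is a parameter value $(\alpha_0,\lambda_0)\in\Sigma_0$ (so $\sigma=0$) at which the separatrix $\Gamma_+$ returns to $O$ and enters it tangent to $e_2$; since $\mathcal{R}e_2=e_2$ and $\mathcal{R}$ preserves the half-space the separatrix comes back through, $\Gamma_-=\mathcal{R}\Gamma_+$ enters $O$ from the same side, so $\Gamma_+\cup\Gamma_-\cup\{O\}$ is a genuine homoclinic butterfly and conditions 1--2 hold. By Theorem~\ref{th:sep-val}, at the same $(\alpha_0,\lambda_0)$ the separatrix value $A$ defined by (\ref{eqa}) satisfies $0<|A|<2$, i.e. condition (\ref{maincheck}).

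It then remains to verify the unfolding requirement. Take the two-parameter family in the criterion to be $X_{\alpha,\lambda}$ itself. One must check that near $(\alpha_0,\lambda_0)$ the saddle value and the splitting parameter of $\Gamma_+$ can be varied independently, equivalently that the homoclinic-bifurcation curve $\mathcal{H}=\{\text{split}=0\}$ is transversal to $\Sigma_0=\{\sigma=0\}$ at $(\alpha_0,\lambda_0)$. Since $\Sigma_0$ is explicit and the first intersection of the separatrix with a fixed cross-section depends non-degenerately on $(\alpha,\lambda)$ (again certified inside Theorems~\ref{th:homoclinic}--\ref{th:sep-val}, using rigorous bounds on the parameter-derivatives), this transversality holds. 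The cited theorem from \cite{ShC} now applies and yields an open set of parameters $(\alpha,\lambda)$ for which $X_{\alpha,\lambda}$ has a Lorenz attractor of the Afraimovich-Bykov-Shilnikov model, which is exactly Theorem~\ref{thm:main}.

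The main obstacle is everything inside Theorems~\ref{th:homoclinic} and \ref{th:sep-val}: one must rigorously integrate $\Gamma_+$, together with the variational equation along it, from a small neighbourhood of $O$ in the direction $e_1$, around the global loop, and back into a small neighbourhood of $O$, and then certify with interval arithmetic (a) that the returning trajectory is captured by $W^s(O)$ and enters $O$ tangent to $e_2$ --- the delicate step, because the return time is infinite, so one needs a trapping-region/cone estimate near $O$ to close the loop rigorously; (b) the two-sided bound $0<|A|<2$ for the separatrix value; and (c) the transversality of $\mathcal{H}$ and $\Sigma_0$ via rigorous parameter-derivative bounds. These rigorous-numerics estimates, rather than the formal assembly above, are the real work.
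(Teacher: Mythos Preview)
Your proposal is correct and follows exactly the paper's route: check the symmetry and eigenvalue structure of $O$, restrict to the curve $\Sigma_0=\{\sigma=0\}$, invoke Theorem~\ref{th:homoclinic} for the homoclinic butterfly and Theorem~\ref{th:sep-val} for the bound $0<|A|<2$, then apply the Shilnikov criterion. One small correction on the unfolding step: the paper does not compute rigorous parameter-derivatives as you suggest; instead, since the rescaled system (\ref{eq:S-M-ode}) is parameterised by $a$ along $\Sigma_0$, Theorem~\ref{th:homoclinic} shows via a Bolzano sign-change argument ($h(a_l)$ and $h(a_r)$ have opposite signs) that the loop splits as $a$ moves along $\Sigma_0$, while $\sigma$ is varied simply by moving off $\Sigma_0$ --- so the independence of the two unfolding directions is immediate without any derivative estimates, and Theorem~\ref{th:sep-val} plays no role in that part.
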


\subsection{Discrete Lorenz attractor in local and global bifurcations.}\label{dla}
The main theorem can be applied to the study of local bifurcations of triply degenerate periodic points of three-dimensional maps.
For example, consider a 3D H\'enon-like map $(x,y,z)\mapsto(\bar x, \bar y, \bar z)$
\begin{equation}\label{3dhe}
\bar x=y, \qquad \bar y=z, \qquad \bar z =M_1 + Bx + M_2y - z^2,
\end{equation}
where $M_1$, $M_2$ and $B$ are parameters and $(x,y,z)\in \mathbb{R}^3$. Pictures numerically obtained in \cite{GOST5}
show that this map has a strange attractor which looks very similar to the Lorenz attractor, even though this is
a discrete dynamical system and not a system of differential equations. The explanation to this fact (the emergence of
a discrete analogue of a Lorenz attractor) can be obtained based on the following observation.

At $M_1=-1/4$, $M_2=1$, $B=1$ this map has a fixed point at $x=y=z=1/2$, and this point has all three multipliers (the eigenvalues of the linearization matrix  at this point) on the unit circle: $(-1,-1,1)$. This is a codimension-3 bifurcation and, as
shown in \cite{GOST5}, the flow normal form for this bifurcation in this map is given by the Shimizu-Morioka system. 
More precisely, at $(M_1,M_2,B)$ close to $(-1/4,1,1)$ and $(B+M_1-1)^2+4M_1>0$ we can shift the coordinate origin
to the fixed point $x=y=z=x^*=(B+M_2-1+\sqrt{(B+M_2-1)^2+4M_1})/2$. Then the map will take the form
\begin{equation}\label{3dh}
\bar x=y, \qquad \bar y=z, \qquad \bar z =Bx + M_2y - 2x^*z- z^2.
\end{equation}
Introduce small parameters $\varepsilon_1=1-B$, $\varepsilon_2=1-M_2$, $\varepsilon_3=2x^*-1$. It was shown in \cite{GOST5} that in the region $\varepsilon_1>0$, $\varepsilon_1+\varepsilon_3>\varepsilon_2$, the second iteration of (\ref{3dh}) near the origin is $O(s^2)$-close, in appropriately chosen rescaled coordinates, to the time-$s$ shift by the flow of a system, which is
$O(s)$-close to the Shimizu-Morioka system (\ref{1}) with $\alpha=(\varepsilon_1+\varepsilon_2+\varepsilon_3)/(4s)$, $\lambda=(\varepsilon_1-\varepsilon_3)/(2s)$,  $s=\sqrt{(\varepsilon_1-\varepsilon_2+\varepsilon_3)/2}$.
This means that if we make $N$ iterations of map (\ref{3dh}), such that $N$ is even and of order $s^{-1}$, the result will be $O(s)$-close to the time-1 map of the Shimizu-Morioka system. 

\correction{12}{12}{In other words, the $N$-th iteration of map (\ref{3dh}) is the time-1 map for a certain small, time-periodic perturbation of the Shimizu-Morioka system:
\begin{equation}\label{1time}
\left\{\begin{array}{l} \dot x=y+ f_1(x,y,z,t), \\ \dot y= (1-z) x -\lambda y+f_2(x,y,z,t), \\ \dot z= -\alpha z + x^2+f_3(x,y,z,t),
\end{array}\right.
\end{equation}
where $f_{1,2,3}$ are $C^1$-small functions, 1-periodic in time $t$. According to \cite{TSH07}, the pseudohyperbolicity persists
at small time-periodic perturbations. Therefore, as by Theorem \ref{thm:main} the Shimizu-Morioka system has
a pseudohyperbolic attractor for an open set of $(\alpha,\lambda)$ values, the same holds true for the map (\ref{3dh}) for the corresponding set of values of $\varepsilon_{1,2,3}$. This attractor is called a discrete Lorenz attractor because it shape is similar
to the Lorenz attractor in the Shimizu-Morioka system. However, its structure is much more complicated than that of the Lorenz attractor described in section \ref{abs}. In particular, the discrete Lorenz attractor may contain homoclinic tangencies \cite{TSH07} and heterodimensional cycles involving saddle periodic orbits with different dimensions of the unstable manifold \cite{LT18}
(more discussions can be found in \cite{GOT13}). This makes a complete description of the dynamics of the discrete Lorenz attractor
impossible but, anyway, its pseudohyperbolicity allows to conclude that every orbit in this attractor is unstable and
this property persists for all small perturbations.}

Since the map (\ref{3dh}) is obtained from (\ref{3dhe}) just by a change of coordinates, we have the following
\begin{theorem}\label{thm:hen}
There exists an open set in the space of parameters $(M_1,M_2,B)$ for which the 3D H\'enon map (\ref{3dhe})
has a pseudohyperbolic discrete Lorenz attractor.
\end{theorem}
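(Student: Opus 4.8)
The plan is to combine Theorem~\ref{thm:main} with the normal-form rescaling of \cite{GOST5} recalled above and with the fact --- noted right after (\ref{doms}) --- that the invariant subspace families $N_1,N_2$ and the estimates (\ref{contr})--(\ref{expv}) persist under $C^1$-small perturbations. First I would use Theorem~\ref{thm:main} to fix a nonempty open set $\mathcal{U}\subset\{\alpha>0,\ \lambda>0\}$ on which system (\ref{1}) has a Lorenz attractor; for each $(\alpha,\lambda)\in\mathcal{U}$ it is then pseudohyperbolic in a bounded, strictly forward-invariant domain $\mathcal{D}_{\alpha,\lambda}$, and hence so is its time-$1$ map $\Phi_{\alpha,\lambda}$, regarded as a diffeomorphism, which therefore possesses a pseudohyperbolic attractor in $\mathcal{D}_{\alpha,\lambda}$ (its maximal attractor there, containing the flow's Lorenz attractor). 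Passing to a smaller open set $\mathcal{U}'$ with compact closure in $\mathcal{U}$, the domains, the splitting, and the constants can be chosen uniformly over $\overline{\mathcal{U}'}$, so there is $\delta>0$ such that every diffeomorphism that is $C^1$-$\delta$-close to $\Phi_{\alpha,\lambda}$ on $\mathcal{D}_{\alpha,\lambda}$ is again pseudohyperbolic there and has a pseudohyperbolic attractor.

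Next I would realize these model maps by iterates of the H\'enon map. For $(M_1,M_2,B)$ near $(-1/4,1,1)$ in the region $\varepsilon_1>0$, $\varepsilon_1+\varepsilon_3>\varepsilon_2$, the origin shift turns (\ref{3dhe}) into (\ref{3dh}), and, by \cite{GOST5}, in the appropriate $s$-dependent rescaled coordinates the $N$-th iterate of (\ref{3dh}) with $N$ even of order $s^{-1}$ is $C^1$-$O(s)$-close to $\Phi_{\alpha,\lambda}$ for $\alpha=(\varepsilon_1+\varepsilon_2+\varepsilon_3)/(4s)$, $\lambda=(\varepsilon_1-\varepsilon_3)/(2s)$, $s=\sqrt{(\varepsilon_1-\varepsilon_2+\varepsilon_3)/2}$ --- equivalently, it is the time-$1$ map of the $C^1$-small time-periodic perturbation (\ref{1time}) of (\ref{1}), whose pseudohyperbolicity also follows from \cite{TSH07}. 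Inverting these relations yields, for each $s>0$, $\varepsilon_1=(\alpha+\lambda)s+s^2/2$, $\varepsilon_2=2\alpha s-s^2$, $\varepsilon_3=(\alpha-\lambda)s+s^2/2$; since $\alpha+\lambda>0$, $\varepsilon_1+\varepsilon_3-\varepsilon_2=2s^2>0$ and $\varepsilon_1+\varepsilon_2+\varepsilon_3=4\alpha s>0$, the admissibility constraints hold for all small $s>0$ and the map $(\varepsilon_1,\varepsilon_2,\varepsilon_3)\mapsto(M_1,M_2,B)$ is single-valued, with $M_1=\frac{1}{4}[(\varepsilon_1+\varepsilon_2+\varepsilon_3)^2-(1-\varepsilon_1-\varepsilon_2)^2]$, $M_2=1-\varepsilon_2$, $B=1-\varepsilon_1$. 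A short determinant computation then shows that the composite $(\alpha,\lambda,s)\mapsto(\varepsilon_1,\varepsilon_2,\varepsilon_3)\mapsto(M_1,M_2,B)$ has Jacobian $(-8s^3)\cdot(-2\alpha s)=16\alpha s^4\neq0$ for $s>0$ (the two factors being the Jacobians of $(\alpha,\lambda,s)\mapsto\varepsilon$ and $\varepsilon\mapsto(M_1,M_2,B)$), hence is a local diffeomorphism, so it maps $\mathcal{U}'\times(0,s^\ast)$ onto an open set $\mathcal{V}$ in $(M_1,M_2,B)$-space.

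Taking $s^\ast$ small enough (using the uniformity from the first paragraph together with the $O(s)$ bound), I would conclude that for every $(M_1,M_2,B)\in\mathcal{V}$ the rescaled $N$-th iterate $\Psi$ of (\ref{3dh}) is $C^1$-$\delta$-close to the corresponding $\Phi_{\alpha,\lambda}$, hence pseudohyperbolic with a pseudohyperbolic attractor $\mathcal{A}_N=\bigcap_{j\ge0}\Psi^j(\mathcal{D}_{\alpha,\lambda})$. Undoing the rescaling and the origin shift transports $\Psi$ to $F^N$, $F$ being the H\'enon map (\ref{3dhe}), so $F^N$ has a pseudohyperbolic attractor in a strictly forward-invariant domain $U$. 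Since $F$ is a diffeomorphism (invertible because $B\neq0$), $\bigcup_{k=0}^{N-1}F^k(U)$ is strictly forward invariant under $F$, the set $\mathcal{A}=\bigcup_{k=0}^{N-1}F^k(\mathcal{A}_N)$ is compact and $F$-invariant (it is the maximal attractor of $F$ there), and the $DF^N$-invariant splitting and cone field on $\mathcal{A}_N$ are carried by the $DF^k$ to a $DF$-invariant pseudohyperbolic structure on $\mathcal{A}$, with the estimates (\ref{contr})--(\ref{expv}) inherited from those for $F^N$. Thus $F$ has a pseudohyperbolic attractor for all parameters in the open set $\mathcal{V}$, which is the asserted discrete Lorenz attractor, and Theorem~\ref{thm:hen} follows.

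The genuinely hard work is all imported here: the computer-assisted Theorem~\ref{thm:main}, the delicate normal-form rescaling of \cite{GOST5}, and the robustness of pseudohyperbolicity. Within this argument, the step I expect to need the most care is the openness in the second paragraph --- one must verify that the parameter correspondence is a true local diffeomorphism rather than collapsing the two-parameter family $\mathcal{U}'$ onto a lower-dimensional surface, so that the resulting parameter set for (\ref{3dhe}) is genuinely three-dimensional and open --- together with the bookkeeping that keeps the $C^1$-closeness and the pseudohyperbolicity constants uniform over $\overline{\mathcal{U}'}$, so that a single threshold $s^\ast$ works for the whole open set.
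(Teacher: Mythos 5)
Your proposal follows essentially the same route as the paper: Theorem~\ref{thm:main} for the Shimizu-Morioka system, the rescaling of \cite{GOST5} identifying the $N$-th iterate of (\ref{3dh}) with the time-$1$ map of the small time-periodic perturbation (\ref{1time}), persistence of pseudohyperbolicity under such perturbations \cite{TSH07}, and the coordinate change back to (\ref{3dhe}). Your explicit inversion of the parameter relations and the Jacobian computation showing that $(\alpha,\lambda,s)\mapsto(M_1,M_2,B)$ is a local diffeomorphism (so the resulting parameter set is genuinely open), as well as the passage from an attractor of $F^N$ to one of $F$, are correct details that the paper leaves implicit.
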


Note that the same conclusion holds for larger classes of three-dimensional maps. It was shown in \cite{GOT13} that
the normal form for the bifurcations of the zero fixed point of any map of the type
\begin{eqnarray*}\label{3dhg}
\bar x&= y, \\ \bar y &= z, \\ \bar z &= (1-\varepsilon_1)x + (1-\varepsilon_2) y - (1+\varepsilon_3) z
+ ay^2 + byz + cz^2 + O(\|x,y,z\|^3),
\end{eqnarray*}
is the Shimizu-Morioka system, provided the condition
\begin{equation}\label{scon}
(c-a)(a-b+c)>0
\end{equation}
is fulfilled. Therefore, by the same arguments as for the map (\ref{3dh}), Theorem \ref{thm:main} provides a formal
justification for the claim of \cite{GOT13} (see Lemma 3.1 there) about the existence of pseudohyperbolic attractors
for an open set of parameters $\varepsilon_{1,2,3}$ in maps (\ref{3dhg}) whose coefficients satisfy condition (\ref{scon}).

\correct{Map (\ref{3dh}) is particularly important for the theory of global bifurcations because it appears as a normal form
 for the first-return maps near many types of homoclinic tangencies and heteroclinic cycles with tangencies in three-dimensional and higher-dimensional maps \cite{GGS}. Given a homoclinic or heteroclinic cycle, one can estimate its effective dimension --- the maximal possible number of zero Lyapunov exponents that periodic orbits born at bifurcations of such cycle can have \cite{T96}. Several different classes of maps with effectively 3-dimensional homoclinic or heteroclinic cycles were considered in \cite{GMO,GST9,GI,GIT,GI17,O17}. It was shown in these papers that the corresponding bifurcations should produce pseudohyperbolic (discrete Lorenz) attractors. Namely, it was shown that a generic three-parameter unfolding of each of these bifurcations creates regions in the phase space where the first-return map is closely (as close as one wants) approximated, in appropriately chosen coordinates, by the 3D-H\'enon map (\ref{3dh}). Therefore, by the robustness of the pseudohyperbolicity property, the birth of the pseudohyperbolic Lorenz-like attractors at each of these bifurcations is established once the existence of a pseudohyperbolic attractor is shown in map (\ref{3dh}).

As we mentioned, the first numerical evidence for the existence of such attractor in map (\ref{3dh}) was obtained in \cite{GOST5}.
Theorem \ref{thm:hen} makes it a rigorous result. Thus, now we have a full formal proof to the results of \cite{GMO,GST9,GI,GIT,GI17,O17} about the emergence of pseudohyperbolic attractors at bifurcations of effectively 3-dimensional homoclinic and heteroclinic cycles, including the birth of infinitely many coexisting pseudohyperbolic attractors in some situations, see \cite{GST9,GI}.}

\subsection{Main results of the rigorous numerics}\label{numres}
We establish Theorem \ref{thm:main} in the following steps. First, we find  good bounds for the values of
parameters $(\alpha,\lambda)$ for which Shimizu-Morioka system (\ref{1}) \correction{13}{13}{satisfies conditions }1 and 2 of Section \ref{shc}
(the existence of a homoclinic butterfly with zero saddle value $\sigma$). It is easy to check that condition $\sigma=0$
reads as
\begin{equation}\label{sv}
\lambda=\frac{1}{\alpha}-\alpha.
\end{equation}
We will study how the separatrices $\Gamma_\pm$ move as $\alpha$ varies while $\lambda$ is given by (\ref{sv});
the moment a separatrix forms a homoclinic loop corresponds to the parameter values we are looking for.

\correction{14}{14}{It is convenient to scale time $t\to t/\alpha =T$ and variables $x\to x/\sqrt{\alpha}=X$, $y\to y/(\alpha\sqrt{\alpha})=Y$}. Then the system
takes the form
\begin{eqnarray}
\dot{X}& =Y,  \nonumber \\
\dot{Y}& =\left( a+1\right) \left( 1-Z\right) X-aY,  \label{eq:S-M-ode} \\
\dot{Z}& =-Z+X^{2}, \nonumber
\end{eqnarray}
where
$$a=\frac{1}{\alpha^2}-1=\lambda/\alpha.$$

The eigenvalues of the linearization matrix at $(0,0,0)$ are $(-1,1,-(1+a))$ with corresponding
eigenvectors given by $(0,0,1)$, $(1,1,0)$ and $(-(1+a)^{-1},1,0)$,
respectively. We will investigate (\ref{eq:S-M-ode}) for $a\approx 1.72$,
so the hyperbolic equilibrium state $(0,0,0)$ has a one dimensional
unstable manifold, and a two dimensional stable manifold. When the manifolds
intersect, we have a solution that tends to zero both as $t\to+\infty$ and $t\to-\infty$, i.e., a
homoclinic loop to the equilibrium state.
\begin{theorem}
\label{th:homoclinic}There exists $a=a_{0}\in \left[ a_{l},a_{r}\right]$, where
\begin{eqnarray*}
a_{l}& =1.72432329151541-10^{-13}, \\
a_{r}& =1.72432329151541+10^{-13},
\end{eqnarray*}%
for which (\ref{eq:S-M-ode}) has a homoclinic orbit $X_{0}\left( t\right)
,Y_{0}\left( t\right) ,Z_{0}\left( t\right) $ to the equilibrium state $(0,0,0)$.
As $a$ changes between $a_l$ and $a_r$, the loop splits.
Taking
\begin{equation*}
\xi =1-10^{-4},\qquad c=3.5,\qquad T=26,
\end{equation*}
we have the bound:
\begin{equation*}
\left\vert X_{0}\left( T+t\right) \right\vert ,\left\vert Y_{0}\left(
T+t\right) \right\vert ,\left\vert Z_{0}\left( T+t\right) \right\vert \leq
ce^{-\xi t}\left\Vert \left( X_{0}\left( T\right) ,Y_{0}\left( T\right)
,Z_{0}\left( T\right) \right) \right\Vert ,
\end{equation*}
for $t\geq 0$, and for $t\leq 0$ we have:
\begin{equation*}
\left\vert X_{0}\left( t\right) \right\vert ,\left\vert Y_{0}\left( t\right)
\right\vert ,\left\vert Z_{0}\left( t\right) \right\vert \leq ce^{-\xi
\left\vert t\right\vert }\left\Vert \left( X_{0}\left( 0\right) ,Y_{0}\left(
0\right) ,Z_{0}\left( 0\right) \right) \right\Vert .
\end{equation*}
\end{theorem}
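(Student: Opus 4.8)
The plan is a computer-assisted argument: build a validated enclosure of the invariant manifolds of the origin, integrate the unstable separatrix rigorously with interval arithmetic, and close the existence of the homoclinic connection by an intermediate value argument in the parameter $a$. First I would exploit the symmetry ${\cal R}:(X,Y,Z)\mapsto(-X,-Y,Z)$ of (\ref{eq:S-M-ode}): the origin is a saddle whose one-dimensional unstable manifold is tangent to $e_u=(1,1,0)$, the saddle value $\lambda_1+\lambda_2=1+(-1)$ is automatically zero in these coordinates, and ${\cal R}\Gamma_+=\Gamma_-$, so it suffices to track the separatrix $\Gamma_+$ leaving the origin into $\{X>0\}$. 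It forms a homoclinic loop precisely when $\Gamma_+\subset W^s(0)$, and then $\{\Gamma_+,\Gamma_-\}$ is automatically a \emph{homoclinic butterfly}. Since $W^s(0)$ is two-dimensional, $\Gamma_+\subset W^s(0)$ is a single scalar equation, so one expects it to hold on a discrete set of $a$-values and to unfold transversally — which is exactly the claim ``the loop splits''.

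Step 1 (local manifolds). In a small box $B$ around the origin I would produce, by validated numerics (cone conditions / a validated graph or parametrisation in the spirit of the CAPD toolbox), a rigorous enclosure of $W^u_{\mathrm{loc}}(0)$ as a short arc and of $W^s_{\mathrm{loc}}(0)$ as a graph $\{u=\psi(v,w)\}$ over the stable eigenspace, uniformly for $a\in[a_l,a_r]$; here $u$ is the unstable coordinate and $(v,w)$ the stable ones. I would fix $(X_0(0),Y_0(0),Z_0(0))$ as the point where this arc meets $\partial B$, and establish inside $B$ quadratic Lyapunov-type estimates yielding: (i) backward along $W^u_{\mathrm{loc}}$, $\|(X_0(t),Y_0(t),Z_0(t))\|\le c\,e^{-\xi|t|}\|(X_0(0),Y_0(0),Z_0(0))\|$ for $t\le 0$, and (ii) forward along a neighbourhood of $W^s_{\mathrm{loc}}$, $\|(X_0(T+t),Y_0(T+t),Z_0(T+t))\|\le c\,e^{-\xi t}\|(X_0(T),Y_0(T),Z_0(T))\|$ for $t\ge 0$. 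The slack $1-\xi=10^{-4}$ absorbs the nonlinearity in $B$, while $\xi\approx 1$ reflects that the unstable eigenvalue is $+1$ and the leading stable eigenvalue is $-1$.

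Step 2 (long integration and IVT). Using a validated Taylor-model/Lohner-type integrator I would propagate the enclosure of $(X_0(0),Y_0(0),Z_0(0))$ forward for $t\in[0,T]$ with $T=26$, carrying $a\in[a_l,a_r]$ as an interval parameter and simultaneously integrating the variational equation in $a$. This certifies that $\Gamma_+$ stays bounded, returns near the origin, and re-enters $B$, and yields enclosures of $\gamma_+(T;a)$ and $\partial_a\gamma_+(T;a)$ in $B$. Define $g(a)=u(\gamma_+(T;a))-\psi\bigl(v(\gamma_+(T;a)),w(\gamma_+(T;a))\bigr)$; then $g$ is smooth and $g(a)=0$ iff $\gamma_+(T;a)\in W^s_{\mathrm{loc}}(0)$, i.e. iff $\Gamma_+$ is homoclinic. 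Evaluating $g$ rigorously at the two endpoint values of $a$, the computation is arranged so that $g(a_l)$ and $g(a_r)$ have strictly opposite signs; the intermediate value theorem then gives $a_0\in(a_l,a_r)$ with $g(a_0)=0$, and we set $(X_0,Y_0,Z_0)=\gamma_+(\cdot;a_0)$. The $C^1$ enclosure shows $g'(a)\neq 0$ throughout $[a_l,a_r]$, which is the transversal unfolding (``the loop splits''). The decay bounds follow by gluing: for $t\le 0$ the orbit lies on $W^u_{\mathrm{loc}}(0)\subset B$, so estimate (i) applies; for $t\ge 0$ we have $\gamma_+(T;a_0)\in W^s_{\mathrm{loc}}(0)$, so estimate (ii) applies; and $|X_0|,|Y_0|,|Z_0|\le\|(X_0,Y_0,Z_0)\|$ is trivial.

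\textbf{Main obstacle.} The hard part is Step 2: propagating a validated enclosure over the comparatively long time $t\in[0,26]$ while keeping the wrapping effect small enough to (a) certify that $\Gamma_+$ re-enters the tiny box $B$ and (b) resolve the sign of $g$ at $a_l$ versus $a_r$ even though $a_r-a_l=2\cdot10^{-13}$ — the integration must stay accurate enough to separate parameter values $10^{-13}$ apart after a long flow. This forces careful choices of coordinates and Poincar\'e sections, high-order Taylor models, a $C^1$ (Lohner) scheme, and use of the known eigenstructure at $O$ to avoid integrating too close to the slow stable manifold where return times blow up; matching the validated-manifold data in $B$ with the ODE integrator at both ends of the orbit also requires care.
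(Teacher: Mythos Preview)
Your proposal is correct and follows essentially the same route as the paper: validated cone-condition enclosures of $W^u_{\mathrm{loc}}$ and $W^s_{\mathrm{loc}}$ in a small box around the origin (the paper uses $R=10^{-5}$, Lipschitz slope $L=4\cdot10^{-5}$, and its Theorems~\ref{th:Wu-cap-bound} and~\ref{th:contraction-bound} to get the decay rates $\xi_1,\xi_2>1-10^{-4}$, with the factor $c=3.5$ coming from $2\sqrt{1+L^2}\cdot\|C\|\,\|C^{-1}\|$), rigorous CAPD integration of the separatrix up to $T=26$, and the intermediate value theorem applied to the scalar separation function $h(a)$ (your $g$). The only difference is that the paper does not integrate the variational equation in $a$ nor verify $g'\neq0$: it simply evaluates $h$ at the two endpoints $a_l,a_r$ separately, certifies the enclosures have opposite sign, and this already establishes both the existence of $a_0$ and that the loop is absent (``splits'') at $a_l$ and $a_r$.
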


The proof of the above theorem is obtained with a computer assistance and is given in section \ref{sec:CAPhomoclinic}.
The mere existence of a homoclinic loop in system (\ref{eq:S-M-ode}) can be obtained  purely analytically \cite{TT}.
However, we need good estimates on $a_0$ and the corresponding homoclinic solution, which the methods of \cite{TT}
do not provide. Crucially, these estimates are used in the next theorem where the separatrix value $A$ is estimated.

The separatrix value can be defined as follows (cf. \cite{Rob1,Rob2,TT}). Let a system of three differential equations
have a homoclinic solution $(X_0(t), Y_0(t), Z_0(t))$ to a hyperbolic equilibrium state at zero, so
$(X_0(t), Y_0(t), Z_0(t))\to 0$ as  $t\to\pm\infty$. Let
\begin{equation}\label{eq12}
\frac{d}{dt}\left(\begin{array}{c}x \\ y \\ z\end{array}\right)= B(t) \left(\begin{array}{c}x \\ y \\ z\end{array}\right)
\end{equation}
be the linearization of the system along the loop. In particular, in the case of system  (\ref{eq:S-M-ode}) we have
$$B =\left(\begin{array}{ccc} 0 & 1 & 0\\
(a_0 + 1)(1 - Z_0(t))  & -a_0 & -(a_0 + 1)X_0(t)\\
2X_0(t) & 0 &-1\end{array}\right).$$
Let $\xi_1$, $\xi_2$ be any two vectors and let $\eta = \xi_1 \times \xi_2$ be their vector
product. If the evolution of $\xi_1$ and $\xi_2$ is defined by (\ref{eq12}), then the
evolution of $\eta$ is governed by
\begin{equation}\label{areas}
\frac{d\eta}{dt} = -(B^\top - tr(B) I) \eta,
\end{equation}
where $I$ is the $(3 \times 3)$ identity matrix. This equation describes the evolution of infinitesimal two-dimensional areas near the homoclinic loop.

Since $(X_0(t), Y_0(t), Z_0(t))$ tends to zero exponentially, the asymptotic behavior of solutions of (\ref{areas}) as $t\to\pm\infty$
is determined by the limit matrix
$$\hat B = -(B_\infty^\top - tr(B_\infty) I)$$
where $B_\infty=\lim_{t\to\pm\infty} B(t)$, which is the linearization of the original system at the hyperbolic equilibrium at zero. If
$\lambda_1 >0 > \lambda_2 >\lambda_3$ are the eigenvalues of $B_\infty$, then the eigenvalues of $\hat B$ are
$$\sigma_1=\lambda_1+\lambda_2, \qquad \sigma_2=\lambda_1+\lambda_3, \qquad \sigma_3=\lambda_2+\lambda_3.$$
We are interested here in the case of zero saddle value, i.e., $\lambda_1+\lambda_2=0$. Then the eigenvalues of $\hat B$ are
$0$, $\sigma_2<0$ and $\sigma_3 < 0$, so every solution of (\ref{areas}) tends, as $t\to+\infty$ to a constant times the eigenvector of $\hat B$ that corresponds to the zero eigenvalue (this is the vector $v_0$, which is the vector product
of the eigenvectors of $B_\infty$ that correspond to the eigenvalues $\lambda_1$ and $\lambda_2$). It also follows that
only one solution of (\ref{areas}) tends to $v_0$ in backward time, as $t\to -\infty$. We take this particular solution $\eta_0(t)$
and denote
\begin{equation}\label{eqa}
\lim_{t\to+\infty} \eta_0(t) = A v_0.
\end{equation}
The coefficient $A$ is the sought separatrix value. \correction{15}{15}{From the definition of $A$ one can see that
$$|A| = \sup \lim_{t\to+\infty} \frac{\|\eta (t)\|}{\|\eta(-t)\|},$$
where the supremum is taken over all the solutions of (\ref{areas}). }Thus, $A$ determines
the maximal expansion of infinitesimal areas along the homoclinic loop.

In the case of system  (\ref{eq:S-M-ode}) system (\ref{areas}) becomes
\begin{equation}
\eta ^{\prime }\left( t\right) =\left(
\begin{array}{ccc}
-\left( a_{0}+1\right) & -\left( a_{0}+1\right) \left( 1-Z_{0}\left(
t\right) \right) & -2X_{0}\left( t\right) \\
-1 & -1 & 0 \\
0 & \left( a_{0}+1\right) X_{0}\left( t\right) & -a_{0}%
\end{array}%
\right) \eta \left( t\right).  \label{eq:S-M-ode2}
\end{equation}
This equation, in the limit $t\rightarrow \pm \infty$ becomes
\begin{equation}
\eta ^{\prime }\left( t\right) =\hat B \eta \left( t\right) = \left(
\begin{array}{ccc}
-(a_{0}+1) & -(a_{0}+1) & 0 \\
-1 & -1 & 0 \\
0 & 0 & -a_{0}%
\end{array}%
\right) \eta \left( t\right) .  \label{eq:S-M-ode2-limit}
\end{equation}%
The eigenvalues are $0,-(a+2),-a$ with corresponding eigenvectors $%
v_{0}=(1,-1,0)$, $v_{-\left( a+2\right) }=(1+a,1,0)$ and $v_{-a}(0,0,1)$,
respectively.

\begin{theorem}
\label{th:sep-val}\correction{16}{16}{There exists an orbit $\eta \left( t\right) $ of (\ref{eq:S-M-ode2}), for which
\begin{eqnarray*}
\lim_{t\rightarrow -\infty }\eta \left( t\right) &\in &\left[
0.99984336210766,1.0001566378923\right] v_{0}, \\
\lim_{t\rightarrow +\infty }\eta \left( t\right) &\in
&[0.62606812264791,0.62663392848044] v_{0},
\end{eqnarray*}
meaning that
\begin{equation}\label{etaa}
\lim_{t\rightarrow +\infty }\frac{\left\Vert \eta \left( t\right)
\right\Vert }{\left\Vert \eta \left( -t\right) \right\Vert }\in \left[
0.62597007201516,0.6267320984754\right].
\end{equation}}
\end{theorem}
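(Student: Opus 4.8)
The plan is to identify the orbit $\eta$ as, up to a scalar multiple, the \emph{unique} solution of (\ref{eq:S-M-ode2}) that stays bounded as $t\to-\infty$, and then to follow it forward to $t=+\infty$. Because $X_0(t)$ and $Z_0(t)$ decay exponentially by Theorem~\ref{th:homoclinic}, equation (\ref{eq:S-M-ode2}) is an exponentially small, hence integrable, non-autonomous perturbation of the constant-coefficient equation (\ref{eq:S-M-ode2-limit}), whose matrix $\hat B$ has simple spectrum $\{0,-(a_0+2),-a_0\}$ with eigenvectors $v_0$, $v_{-(a_0+2)}$, $v_{-a_0}$. Passing to the coordinates $(c_0,c_1,c_2)$ along these eigenvectors, the limit equation decouples as $\dot c_0=0$, $\dot c_1=-(a_0+2)c_1$, $\dot c_2=-a_0c_2$; by the Levinson asymptotic theorem a generic solution of (\ref{eq:S-M-ode2}) therefore blows up along $v_{-(a_0+2)}$ or $v_{-a_0}$ as $t\to-\infty$, and the backward-bounded solutions form a one-dimensional family, every member of which tends to a multiple of $v_0$. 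At $t\to+\infty$ the same picture gives $c_1,c_2\to0$ and $c_0\to\mathrm{const}$, so \emph{every} solution tends to a multiple of $v_0$. With the normalisation $\eta(t)\to v_0$ as $t\to-\infty$, the coefficient $A$ in $\eta(t)\to Av_0$ as $t\to+\infty$ is the separatrix value, and (\ref{etaa}) is exactly the ratio of the forward and backward limits.

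The rigorous estimate splits into three parts. (i) \emph{Backward tail, on $(-\infty,0]$}: writing (\ref{eq:S-M-ode2}) in the $(c_0,c_1,c_2)$ variables and applying variation of constants, a backward-bounded solution must satisfy $c_j(t)=\int_{-\infty}^t e^{-\mu_j(t-s)}g_j(s)\,ds$ for $j=1,2$ (with $\mu_1=a_0+2$, $\mu_2=a_0$) together with $c_0(t)=c_0(-\infty)+\int_{-\infty}^t g_0(s)\,ds$, where the inhomogeneities $g_j$ are linear in $(c_0,c_1,c_2)$ with coefficients bounded by a constant times $|X_0(s)|+|Z_0(s)|\le \mathrm{const}\cdot e^{\xi s}$ on $s\le0$ by Theorem~\ref{th:homoclinic}; since $\xi>0$ these integrals converge, and a contraction argument (equivalently, a cone condition around $\mathbb{R}v_0$) establishes the backward-bounded solution, shows it is unique up to the free scalar $c_0(-\infty)$, encloses $\eta(0)$ in an explicit narrow set for a convenient normalisation, and bounds $\lim_{t\to-\infty}\eta(t)=c_0(-\infty)v_0$ by the backward interval in the statement. (ii) \emph{Loop, on $[0,26]$}: propagate the enclosure of $\eta(0)$ forward to an enclosure of $\eta(26)$ by rigorous interval-arithmetic integration of (\ref{eq:S-M-ode2}), using as input the rigorous enclosure of $(X_0(t),Z_0(t))$ on $[0,26]$ produced in the proof of Theorem~\ref{th:homoclinic}. (iii) \emph{Forward tail, on $[26,\infty)$}: an estimate analogous to (i), but easier since no uniqueness is at stake, shows that for $t\ge26$ the components $c_1(t),c_2(t)$ decay while $c_0(t)$ converges, so $\lim_{t\to+\infty}\eta(t)$ equals the $v_0$-component of $\eta(26)$ up to an explicitly bounded, exponentially small correction. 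Combining (i)--(iii) produces the two stated intervals, and dividing them gives (\ref{etaa}).

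I expect step (i) to be the main obstacle. The zero eigenvalue of $\hat B$ --- which is a consequence of the vanishing saddle value, i.e.\ condition~2 of Section~\ref{shc} --- means there is no contraction in the $v_0$-direction to drive the fixed-point argument, so convergence must be obtained solely from the decay of the perturbation $X_0,Z_0$; this is precisely why the sharp exponential bounds of Theorem~\ref{th:homoclinic}, and not merely the existence of the homoclinic loop, are indispensable. A second, more computational difficulty is to prevent the interval enclosures from inflating over the fairly long integration span $[0,26]$, since infinitesimal areas may be expanded transiently along the loop even though $|A|<1$ overall; a careful choice of coordinates and of the normalising representative for $\eta$ will be needed to keep this under control.
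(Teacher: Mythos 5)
Your proposal is correct, and its three-part decomposition --- backward asymptotics in the eigenbasis of $\hat B$, rigorous interval integration along the loop, forward asymptotics --- is exactly the structure of the paper's argument (the paper passes to coordinates $\gamma=P^{-1}\eta$, normalises at $t^{\ast}=0$ on the slice $U_{1,0}$, integrates that enclosure forward to time $T$, and then applies a forward-tail lemma). Where you genuinely differ is in how the tail lemmas are proved. You propose a Perron-type argument: represent the backward-bounded solution as a fixed point of the variation-of-constants operator $c_j(t)=\int_{-\infty}^t e^{-\mu_j(t-s)}g_j(s)\,ds$ for $j=1,2$ together with $c_0(t)=c_0(-\infty)+\int_{-\infty}^t g_0(s)\,ds$, and contract using the $e^{\xi s}$ decay of the coefficients supplied by Theorem \ref{th:homoclinic}. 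The paper instead works with two explicit Lyapunov functions, $V=\left\Vert y\right\Vert^2$ on a slab and $V=\left\Vert y\right\Vert^2-x^2$ on a cone (Lemmas \ref{lem:Lap-f} and \ref{lem:Lap-f2}), plus the elementary integral bound (\ref{eq:cdelta-bound}) on the drift of the $x$-coordinate; existence of the backward-convergent point then comes from a topological alignment and compactness argument (extracting a limit of the points $p(t_0)$ as $t_0\to-\infty$), and uniqueness from the cone Lyapunov function. Your route delivers existence, uniqueness and the enclosure of $\eta(0)$ in one stroke and is analytically cleaner; the paper's route reduces everything to a short list of explicit inequalities on $c_b$, $\lambda$, $a_{22}$, $a_{33}$, $r$, $\rho$ (the corollaries following Lemmas \ref{lem:backward-sep-val} and \ref{lem:forward-sep-val}), which is convenient for verification in interval arithmetic. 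Both approaches correctly recognise that the zero eigenvalue of $\hat B$ is harmless because convergence of the $v_0$-component is driven solely by the integrability of the perturbation, which is precisely why the quantitative decay rates of Theorem \ref{th:homoclinic}, and not merely the existence of the homoclinic loop, are indispensable.
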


The proof of the above theorem is obtained with computer assistance and is given in Section \ref{sec:CAP-separatrix-value}.
By definition (\ref{eqa}) of the separatrix value, estimate (\ref{etaa}) gives the following bounds for the separatrix value of the
homoclinic loop in system (\ref{eq:S-M-ode}):
$$A\in  \left[0.62597007201516,0.6267320984754\right].$$
Importantly $0<A<1$. So, by applying Shilnikov criterion we obtain our main result, Theorem \ref{thm:main}, see section \ref{shc}.

The plot of the homoclinic trajectory from Theorem \ref{th:homoclinic} can
be seen on the left plot in Figure \ref{fig:original-coordinates} on page
\pageref{fig:original-coordinates}. The plot of the heteroclinic orbit from
Theorem \ref{th:sep-val} is given in Figure \ref{fig:eta-trajectory} on page
\pageref{fig:eta-trajectory}.

\begin{remark}\label{rem:waves}
\correction{18}{18}{The techniques we use for the proofs of Theorems \ref{th:homoclinic} and \ref{th:sep-val} could also be used for the study of traveling waves and their stability in problems coming from PDEs on the line \cite{wave1,wave2,wave3,wave4}. This is because proving their existence requires establishing the existence of heteroclinic/homoclinic connecting orbits. Investigating their stability requires studying additionally some linearized equations. We develop tools for such problems in Sections \ref{sec:homoclinic}, \ref{sec:sep-val}.}
\end{remark}


\section{Some notations}

In the subsequent sections we present a methodology for establishing homoclinic orbits and for the computation of the separatrix value.  First we introduce the following notations. 

We will write $B_{k}(R)$
for ball in $\mathbb{R}^{k}$ of radius $R$, centered at zero. For a matrix $A
$ we define the logarithmic norm of $A$ as (see \cite{CZmelnikov} and literature cited there)
\begin{equation*}
l\left( A\right) =\lim_{h\rightarrow 0^{+}}\frac{\left\Vert I+Ah\right\Vert
-\left\Vert I\right\Vert }{h}.
\end{equation*}%
We will also use the notation%
\begin{eqnarray*}
m\left( A\right)  &=&\left\{
\begin{array}{l}
\frac{1}{\left\Vert A\right\Vert ^{-1}}\qquad \det A\neq 0, \\
0\qquad \qquad \text{otherwise,}%
\end{array}%
\right.  \\
m_{l}\left( A\right)  &=&-l\left( -A\right) .
\end{eqnarray*}%
The $m(A)$ is a number with the property that $\left\Vert Av\right\Vert \geq
m\left( A\right) \left\Vert v\right\Vert .$ The $m_{l}\left( A\right) $ can
be interpreted as a `bound from below' of the logarithmic norm.

Let $u,s\in \mathbb{N}$. For a function $f:\mathbb{R}^{u}\times \mathbb{R}%
^{s}\rightarrow \mathbb{R}^{u}\times \mathbb{R}^{s}$, we will use the
notations $\left( x,y\right) \in \mathbb{R}^{u}\times \mathbb{R}^{s}$, where
$x\in \mathbb{R}^{u}$ and $y\in \mathbb{R}^{s}$. The $x$ will play the role
of an ``unstable" coordinate and $y$ will be the ``stable" coordinate, hence
the choice of the notation $u,s$.  We will also write $f_{x}$, and $f_{y}$
for the projections of $f$ onto $\mathbb{R}^{u}$ and $\mathbb{R}^{s}$,
respectively. For a set $D\subset \mathbb{R}^{u}\times \mathbb{R}^{s}$ we
define%
\begin{eqnarray*}
\left[ \frac{\partial f_{x}}{\partial x}\left( D\right) \right] &:&=\left\{
A=\left( a_{ij}\right) \in \mathbb{R}^{u\times u}:a_{i,j}\in \left[
\inf_{p\in D}\frac{\partial f_{x_{i}}}{\partial x_{j}}\left( p\right)
,\sup_{p\in D}\frac{\partial f_{x_{i}}}{\partial x_{j}}\left( p\right) %
\right] \right\} , \\
\left[ \frac{\partial f_{x}}{\partial y}\left( D\right) \right] &:&=\left\{
A=\left( a_{ij}\right) \in \mathbb{R}^{u\times s}:a_{i,j}\in \left[
\inf_{p\in D}\frac{\partial f_{x_{i}}}{\partial y_{j}}\left( p\right)
,\sup_{p\in D}\frac{\partial f_{x_{i}}}{\partial y_{j}}\left( p\right) %
\right] \right\} .
\end{eqnarray*}%
We also define%
\begin{eqnarray*}
m\left( \frac{\partial f_{x}}{\partial x}\left( D\right) \right)
&:=&\inf_{A\in \left[ \frac{\partial f_{x}}{\partial x}\left( D\right) %
\right] }m\left( A\right) , \\
m_{l}\left( \frac{\partial f_{x}}{\partial x}\left( D\right) \right)
&:=&\inf_{A\in \left[ \frac{\partial f_{x}}{\partial x}\left( D\right) %
\right] }m_{l}\left( A\right) , \\
\left\Vert \frac{\partial f_{x}}{\partial y}\left( D\right) \right\Vert
&:=&\sup_{A\in \left[ \frac{\partial f_{x}}{\partial y}\left( D\right) %
\right] }\left\Vert A\right\Vert .
\end{eqnarray*}

We will use the notation $\mathrm{int}(D)$, $\overline{D}$ and $\partial D$
for the interior, closure and boundary of a set $D$, respectively.


\section{Establishing homoclinics\label{sec:homoclinic}}

In this section we give an overview of the method for establishing the
existence of homoclinic orbits to fixed points. The method is written for
the case where we consider a parameter dependent ODE with the vector field $%
f:\mathbb{R}^{3}\times \mathbb{R}\rightarrow \mathbb{R}^{3}$, 
\begin{equation}
p^{\prime }=f(p,a).  \label{eq:ode_Shil}
\end{equation}%
and $a\in A$ is a parameter, with $A=[a_{l},a_{r}]\subset \mathbb{R}$. We
assume that for each $a\in A$ (\ref{eq:ode_Shil}) has a hyperbolic fixed
point $p_{a}^{\ast }$, with one dimensional unstable manifold and two
dimensional stable manifolds.
(If dimensions are the other way around we can change the sign of the vector
field.)

Let $\Phi _{t}(p,a)$ be the flow induced by (\ref{eq:ode_Shil}). Let $%
\overline{B}_{u}\left( R\right) =\left[ -R,R\right] \subset \mathbb{R}$, $%
\overline{B}_{s}\left( R\right) \subset \mathbb{R}^{2}$ and let%
\begin{equation*}
D=\overline{B}_{u}\left( R\right) \times \overline{B}_{s}\left( R\right)
\subset \mathbb{R}^{3},
\end{equation*}%
be a neighborhood of the smooth family of fixed points, meaning that we
assume $p_{a}^{\ast }\in \mathrm{int}D$ for any $a\in A$. 
\begin{figure}[tbp]
\begin{center}
\includegraphics[height=3cm]{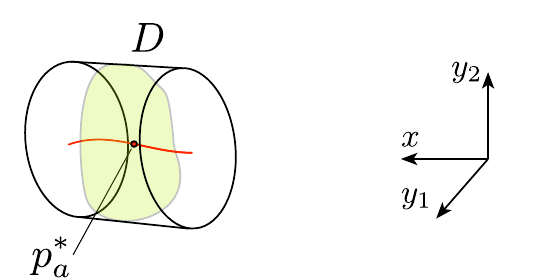}
\end{center}
\caption{The local unstable manifold $W_{a }^{u}$ in red, and
the local stable manifold $W_{a }^{s}$ in green.}
\label{fig:D-setup}
\end{figure}

We denote by $W_{a}^{u}$ the local unstable manifold of $p_{a}^{\ast }$ in $%
D $ and by $W_{a}^{s}$ the local stable manifold of $p_{a}^{\ast }$ in $D$,
i.e.%
\begin{eqnarray}
W_{a}^{u}& =\left\{ p\in D:\Phi _{t}\left( p,a\right) \in D\text{ for }t\leq
0\text{ and }\lim_{t\rightarrow -\infty }\Phi _{t}\left( p,a\right)
=p_{a}^{\ast }\right\} ,  \label{eq:Wu-def} \\
W_{a}^{s}& =\left\{ p\in D:\Phi _{t}\left( p,a\right) \in D\text{ for }t\geq
0\text{ and }\lim_{t\rightarrow +\infty }\Phi _{t}\left( p,a\right)
=p_{a}^{\ast }\right\} .  \label{eq:Ws-def}
\end{eqnarray}%
We assume that $W_{a}^{u}$ and $W_{a}^{s}$ are graphs of $C^{1}$ functions%
\begin{eqnarray*}
w_{a}^{u}& :\overline{B}_{u}\left( R\right) \rightarrow \overline{B}%
_{s}\left( R\right) , \\
w_{a}^{s}& :\overline{B}_{s}\left( R\right) \rightarrow \overline{B}%
_{u}\left( R\right) ,
\end{eqnarray*}%
meaning that (see Figure \ref{fig:D-setup})%
\begin{eqnarray}
W_{a}^{u}& =\left\{ \left( x,w_{a}^{u}\left( x\right) \right) :x\in 
\overline{B}_{u}\left( R\right) \right\} ,  \nonumber \\
W_{a}^{s}& =\left\{ \left( w_{a}^{s}\left( y\right) ,y\right) :y\in 
\overline{B}_{s}\left( R\right) \right\} .  \label{eq:Ws-graph}
\end{eqnarray}

\begin{figure}[ptb]
\begin{center}
\includegraphics[height=4cm]{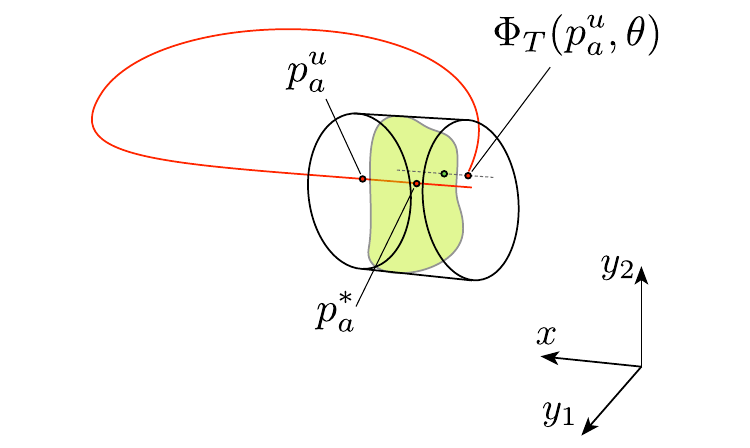}
\end{center}
\caption{We have the $1$-dimensional unstable manifold of $p_{a}^{\ast }$ in red, and the $2$-dimensional local stable manifold $W_{%
a}^{s}$ in $D$ in green. The $h\left( a\right) $
is the signed distance along the $x$ coordinate between $W_{a}^{s}$ and $\Phi_{T}\left( p_{a}^{u},a\right) $;
this is the distance along the dotted line on the plot. }
\label{fig:homoclinic-2}
\end{figure}

Let 
\begin{equation}
p_{a}^{u}:=\left( R,w_{a}^{u}\left( R\right) \right) \in \mathbb{R}^{3}.
\label{eq:pu-theta-def}
\end{equation}\correction{17}{17}{}
Consider $T>0$ and assume that for all $a\in A$, $\Phi _{T}\left(
p_{a}^{u},a\right) \in D$. Let us define%
\begin{equation*}
h:A\rightarrow \mathbb{R},
\end{equation*}%
as%
\begin{equation}
h\left( a\right) =\pi _{x}\Phi _{T}\left( p_{a}^{u},a\right) -w_{a}^{s}(\pi
_{y}\Phi _{T}\left( p_{a}^{u},a\right) ).  \label{eq:h-function-def}
\end{equation}

We now state a natural result, that $h\left( a\right) =0$ implies an
intersection of the stable and unstable manifolds of $p_{a}^{\ast }$. (See
Figure \ref{fig:homoclinic-2}.)

\begin{theorem}
\cite{CW}\label{th:homoclinic-existence}If 
\begin{equation}
h(a_{l})<0\qquad \text{and}\qquad h(a_{r})>0  \label{eq:Bolzano-assmpt}
\end{equation}%
then there exists a $\psi \in A$ for which we have a homoclinic orbit to $%
p_{\psi }^{\ast }$.
\end{theorem}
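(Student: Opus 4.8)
The plan is to prove Theorem~\ref{th:homoclinic-existence} by applying the intermediate value theorem to the scalar function $h$ of (\ref{eq:h-function-def}); the substance is the continuity of $h$ on $A$ together with the correct interpretation of a zero of $h$ as a homoclinic orbit. I would first check that $h$ is well defined and continuous on $A=[a_l,a_r]$. By the parametrised (un)stable manifold theorem the hyperbolic fixed point $p_a^\ast$ and its local manifolds $W_a^u,W_a^s$ in $D$ persist and depend continuously (indeed $C^1$) on $a$; in particular the graph maps $w_a^u:\overline{B}_u(R)\to\overline{B}_s(R)$ and $w_a^s:\overline{B}_s(R)\to\overline{B}_u(R)$ of (\ref{eq:Ws-graph}) vary continuously in the $C^0$ norm, so by (\ref{eq:pu-theta-def}) the map $a\mapsto p_a^u=(R,w_a^u(R))$ is continuous. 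Since $\Phi_T(p_a^u,a)$ is defined for every $a\in A$ by hypothesis, standard continuous dependence of solutions of (\ref{eq:ode_Shil}) on initial conditions and parameters over the fixed time interval $[0,T]$ shows that $a\mapsto\Phi_T(p_a^u,a)$ is continuous. Moreover $\Phi_T(p_a^u,a)\in D=\overline{B}_u(R)\times\overline{B}_s(R)$, so $\pi_y\Phi_T(p_a^u,a)\in\overline{B}_s(R)$ and $w_a^s$ may legitimately be applied to it; being a difference and composition of continuous maps,
\[
h(a)=\pi_x\Phi_T(p_a^u,a)-w_a^s(\pi_y\Phi_T(p_a^u,a))
\]
is continuous on $A$.

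Now (\ref{eq:Bolzano-assmpt}) gives $h(a_l)<0<h(a_r)$, so Bolzano's theorem produces $\psi\in(a_l,a_r)$ with $h(\psi)=0$. Put $q:=\Phi_T(p_\psi^u,\psi)$ and $y:=\pi_y q\in\overline{B}_s(R)$; then $h(\psi)=0$ reads $\pi_x q=w_\psi^s(y)$, hence $q=(w_\psi^s(y),y)\in W_\psi^s$ by (\ref{eq:Ws-graph}). On the other hand $p_\psi^u=(R,w_\psi^u(R))\in W_\psi^u$. By the defining property (\ref{eq:Wu-def}) of $W_\psi^u$ we have $\Phi_t(p_\psi^u,\psi)\to p_\psi^\ast$ as $t\to-\infty$, while (\ref{eq:Ws-def}) applied to $q\in W_\psi^s$ gives $\Phi_{t+T}(p_\psi^u,\psi)=\Phi_t(q,\psi)\to p_\psi^\ast$ as $t\to+\infty$. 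Thus the orbit through $p_\psi^u$ at parameter $\psi$, which exists for all $t\in\mathbb{R}$, is bi-asymptotic to $p_\psi^\ast$. Finally $p_\psi^u\neq p_\psi^\ast$, since $\pi_x p_\psi^u=R$ whereas $|\pi_x p_\psi^\ast|<R$ because $p_\psi^\ast\in\mathrm{int}\,D$; therefore this orbit is a genuine homoclinic loop to $p_\psi^\ast$ and not the equilibrium itself, which is the claim.

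The only step here that is not a routine continuity statement is the continuous dependence of the local stable manifold graph $w_a^s$ on the parameter $a$, uniformly on $\overline{B}_s(R)$. This is classical — for instance, one adjoins $a$ as an extra phase variable with $\dot a=0$ and applies the stable manifold theorem (graph transform / fibre contraction) to the extended autonomous system, which also yields the corresponding statement for $w_a^u$ — but it is the point that must be invoked explicitly rather than assumed. Everything else reduces to continuous dependence of the time-$T$ flow map on initial data and parameter, and the topology of $\mathbb{R}$.
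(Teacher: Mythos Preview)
Your argument is correct and is exactly the natural one: continuity of $h$ on $A$ (via continuous dependence of $w_a^u$, $w_a^s$ and of the time-$T$ map on the parameter) plus Bolzano, followed by the observation that $h(\psi)=0$ places $\Phi_T(p_\psi^u,\psi)$ on the graph of $w_\psi^s$, hence on $W_\psi^s$, so the orbit through $p_\psi^u$ is bi-asymptotic to $p_\psi^\ast$ and nontrivial. There is nothing to compare against in this paper: Theorem~\ref{th:homoclinic-existence} is stated as a ``natural result'' and simply cited from~\cite{CW} without proof, so your write-up supplies precisely the argument the paper omits; the paper's own framework (Theorem~\ref{th:Wu-cap-bound} applied over the whole parameter interval) in fact provides the uniform graph bounds and Lipschitz constants that make the continuity of $a\mapsto w_a^{u},w_a^{s}$ quantitative, which is consistent with your invocation of the parametrised stable/unstable manifold theorem.
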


\subsection{Computer assisted bounds for unstable manifolds of fixed points
of ODEs\label{sec:Wu-bounds}}

In order to apply Theorem \ref{th:homoclinic-existence}, we need to be able
to establish bounds for $h$ defined in (\ref{eq:h-function-def}). Using a
rigorous, interval arithmetic based integrator\footnote{%
in our application we use the CAPD package: http://capd.ii.uj.edu.pl/}, it
is possible to obtain rigorous enclosures for $\Phi _{T}$. In this section
we discuss how to obtain bounds on parameterizations $w_{a}^{s},w_{a}^{u}$
of stable and unstable manifolds of fixed points. For simplicity, we will
skip the parameter $a$ and consider an ODE%
\begin{equation}
p^{\prime }=f(p).  \label{eq:ode-wu-ws}
\end{equation}%
When combined with Theorem \ref{th:homoclinic-existence}, we can apply below
results for (\ref{eq:ode_Shil}) with fixed $a.$

Let $D\subset \mathbb{R}^{u}\times \mathbb{R}^{s}$, 
\begin{equation*}
D=\overline{B}_{u}\left( R\right) \times \overline{B}_{s}\left( R\right) .
\end{equation*}%
In this section we do not need to assume that $u=1$ and $d=2.$ The result
works for arbitrary dimensions.

We define%
\begin{eqnarray}
\overrightarrow{\mu } &=&\sup_{z\in D}\left\{ l\left( \frac{\partial f_{y}}{%
\partial y}(z)\right) +\frac{1}{L}\left\Vert \frac{\partial f_{y}}{\partial x%
}(z)\right\Vert \right\} ,  \label{eq:mu-arrow-def} \\
\overrightarrow{\xi } &=&m_{l}\left( \frac{\partial f_{x}}{\partial x}%
(D)\right) -L\left\Vert \frac{\partial f_{x}}{\partial y}(D)\right\Vert .
\label{eq:xi-arrow-def}
\end{eqnarray}

\begin{definition}
We say that the vector field $f$ satisfies rate conditions if%
\begin{equation}
\overrightarrow{\mu }<0<\overrightarrow{\xi },  \label{eq:rate-cond}
\end{equation}
\end{definition}

\begin{definition}
We say that $D=\overline{B}_{u}\left( R\right) \times \overline{B}_{s}\left(
R\right) $ is an \correction{19}{19}{isolating block for }(\ref{eq:ode-wu-ws}) if

\begin{enumerate}
\item For any $q\in\partial\overline{B}_{u}\left( R\right) \times \overline{B%
}_{s}\left( R\right) $, 
\begin{equation*}
\left( \pi_{x}f(q)|\pi_{x}q\right) >0.
\end{equation*}

\item For any $q\in\overline{B}_{u}\left( R\right) \times\partial \overline{B%
}_{s}\left( R\right) $,%
\begin{equation*}
\left( \pi_{y}f(q)|\pi_{y}q\right) <0.
\end{equation*}
\end{enumerate}
\end{definition}

\begin{definition}
We define the unstable set in $D$ as%
\begin{equation*}
W^{u}=\{z:\text{ }\Phi _{t}(z)\in D\text{ for all }t<0\}.
\end{equation*}
\end{definition}

\correction{20}{20}{Below theorem is a simplified (adapted to fixed points) version of the
results from \cite[Theorem 30]{CZmelnikov}. The paper \cite{CZmelnikov} is
in the setting of normally hyperbolic invariant manifolds, hence the Theorem
30 from that paper is more involved than below result.
\begin{theorem}\cite[Theorem 30]{CZmelnikov}
\label{th:Wu-cap-bound}Let $k\geq 1$. Assume that $f$ is $C^{1}$ and
satisfies the rate conditions. Assume also that $D=\overline{B}_{u}\left(
R\right) \times \overline{B}_{s}\left( R\right) $ is an isolating block
for $f$. Then the set $W^{u}$ is a manifold, which is a graph over $%
\overline{B}_{u}\left( R\right) $. To be more precise, there exists a $C^{1}$
function 
\begin{equation*}
w^{u}:\overline{B}_{u}(R)\rightarrow \overline{B}_{s}(R),
\end{equation*}%
such that 
\begin{equation}
W^{u}=\left\{ \left( x,w^{u}(x)\right) :x\in \overline{B}_{u}(R)\right\} .
\label{eq:Wu-as-graph}
\end{equation}%
Moreover, $w^{u}$ is Lipschitz with constant $L$.
\end{theorem}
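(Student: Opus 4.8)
The plan is to combine three ingredients: a Wa\.{z}ewski-type topological argument showing that the unstable set covers the whole base $\overline{B}_u(R)$; a cone-invariance argument, driven by the rate conditions, promoting it to a single-valued Lipschitz graph; and a linearised version of the same cone argument giving $C^1$ regularity.

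First I would exploit the isolating block structure. Condition~1 says the vector field points strictly outward along the ``horizontal'' boundary $\partial\overline{B}_u(R)\times\overline{B}_s(R)$, so this is an immediate exit set for the forward flow; Condition~2 says it points strictly inward along the ``vertical'' boundary $\overline{B}_u(R)\times\partial\overline{B}_s(R)$, which is therefore an immediate exit set for the backward flow, with the backward flow transverse to it. Consequently the backward exit time of any point whose backward orbit leaves $D$ is finite and depends continuously on the point. Now fix $x_0\in\overline{B}_u(R)$ and suppose $(\{x_0\}\times\overline{B}_s(R))\cap W^u=\emptyset$. Then every point of the ball $\{x_0\}\times\overline{B}_s(R)$ leaves $D$ in backward time, necessarily through the vertical boundary, and sending each point to the $\overline{B}_s(R)$-component of its backward exit point produces a continuous map $\overline{B}_s(R)\to\partial\overline{B}_s(R)$ that restricts to the identity on $\partial\overline{B}_s(R)$ --- a retraction of a ball onto its boundary, which does not exist. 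Hence $\pi_x(W^u)=\overline{B}_u(R)$.

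Next I would use the rate conditions to force $W^u$ to be a single-valued Lipschitz graph. For two orbits $p_1(t),p_2(t)$ staying in $D$ for $t\le 0$, the difference $\Delta(t)=p_1(t)-p_2(t)$ satisfies $\Delta'=M(t)\Delta$ with $M(t)=\int_0^1 Df(p_2(t)+\theta\Delta(t))\,d\theta$, whose blocks lie in the interval enclosures $[\partial f_x/\partial x(D)]$, $[\partial f_x/\partial y(D)]$, etc., by convexity of $D$. The logarithmic-norm inequalities $\frac{d}{dt}\|\Delta_x\|\ge m_l(\partial f_x/\partial x(D))\|\Delta_x\|-\|\partial f_x/\partial y(D)\|\,\|\Delta_y\|$ and $\frac{d}{dt}\|\Delta_y\|\le l(\partial f_y/\partial y)\|\Delta_y\|+\|\partial f_y/\partial x\|\,\|\Delta_x\|$ then show that on the cone boundary $\|\Delta_y\|=L\|\Delta_x\|$ one has $\frac{d}{dt}\log(\|\Delta_y\|/\|\Delta_x\|)\le \overrightarrow{\mu}-\overrightarrow{\xi}<0$; this is exactly the point of definitions (\ref{eq:mu-arrow-def}), (\ref{eq:xi-arrow-def}) and of the rate condition (\ref{eq:rate-cond}), the combined supremum in $\overrightarrow{\mu}$ being what keeps the estimate closed after averaging the Jacobian over the segment. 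Thus the horizontal cone $\{\|\Delta_y\|\le L\|\Delta_x\|\}$ is forward invariant along orbits that stay in $D$, hence the open vertical cone is backward invariant. If $\Delta(0)$ lay in the open vertical cone it would stay there for all $t\le 0$, where $\|\Delta_x\|/\|\Delta_y\|\le 1/L$ gives $\frac{d}{dt}\log\|\Delta_y\|\le \overrightarrow{\mu}<0$ and so $\|\Delta_y(t)\|\to\infty$ as $t\to-\infty$, contradicting $W^u\subset D$. Therefore every difference of points of $W^u$ lies in the closed horizontal cone; together with $\pi_x(W^u)=\overline{B}_u(R)$ this yields a well-defined function $w^u:\overline{B}_u(R)\to\overline{B}_s(R)$ with $\|w^u(x_1)-w^u(x_2)\|\le L\|x_1-x_2\|$ and $W^u$ equal to its graph, which is (\ref{eq:Wu-as-graph}).

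Finally, for $C^1$ regularity I would run the same cone argument on the variational equation $V'=Df(\Phi_t(z))V$ along orbits in $W^u$: the identical inequalities make the horizontal cone in tangent space forward invariant, and the strict contraction $\overrightarrow{\mu}-\overrightarrow{\xi}<0$ forces $D\Phi_{-t}(\Phi_t(z))$, applied to the horizontal subspace, to converge as $t\to+\infty$ to a $u$-dimensional subspace $E^u(z)$ inside the horizontal cone, depending Lipschitz-continuously on $z$; a standard telescoping/comparison estimate (as in the graph-transform proof of the invariant-manifold theorem) identifies $E^u$ with the tangent field of the graph of $w^u$, so $w^u\in C^1$ with $\|Dw^u\|\le L$. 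The main obstacle is precisely this last step: the topological and Lipschitz parts are soft, but turning an invariant tangent-cone field into an honest continuous derivative of $w^u$ requires the quantitative estimates of \cite{CZmelnikov}, so I would invoke Theorem~30 there rather than reproduce its bookkeeping.
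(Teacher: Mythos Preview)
Your argument is correct, but it follows a genuinely different route from the paper's proof in the appendix.

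The paper proceeds via the \emph{graph transform}: it starts from the flat disc $x\mapsto 0$, applies the time-$h$ map $\Phi_h$ iteratively, and passes to the limit to obtain $W^u$. The Lipschitz bound $L$ is extracted from a cone-preservation result for maps (Theorem~\ref{th:cone-alignment-maps}), applied to $\Phi_h$ with the constants $\mu(h),\xi(h)$ supplied by Theorem~\ref{th:coeff-ode-map}: since the initial flat disc lies in every cone $J_u((x,0),L)$ and the cones are forward-invariant, the limit graph inherits the same bound. The existence of the limit, and the $C^1$ smoothness, are imported wholesale from \cite{CZmelnikov}; the paper's only new contribution is tightening the Lipschitz constant from $1/L$ (the center-coordinate version) down to $L$.

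You instead prove surjectivity of $\pi_x|_{W^u}$ by a Wa\.{z}ewski retraction argument on each vertical fibre, and then obtain single-valuedness and the Lipschitz bound $L$ by working directly with the logarithmic-norm inequalities on the difference of two backward-bounded orbits, with no iteration or limiting procedure. This is more elementary and self-contained for the Lipschitz part: you never need to set up the graph-transform machinery or to invoke Theorem~\ref{th:cone-alignment-maps} for the discretised flow. Your observation that the combined supremum defining $\overrightarrow{\mu}$ (and the interval-hull infimum defining $\overrightarrow{\xi}$) is exactly what is needed to close the estimate after averaging the Jacobian along the segment is the key technical point, and it is correct because the logarithmic norm is convex and the averaged Jacobian blocks lie in the interval hulls $[\partial f/\partial\,\cdot\,(D)]$.

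What the graph-transform approach buys is a more natural route to higher regularity: once the iteration is set up, $C^k$ smoothness follows from the same machinery in \cite{CZmelnikov}. Your approach is cleaner for existence and Lipschitz, but --- as you correctly flag --- you still need to invoke \cite{CZmelnikov} for the $C^1$ step, so in the end both proofs lean on the same external reference for smoothness.
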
}

\begin{proof}
The proof is given in Appendix \ref{sec:Wu-cap-bound-proof}. It is a
modification of the argument from \cite{CZmelnikov}. The main difference is
that the results from \cite{CZmelnikov} are for the setting in which in
addition to the hyperbolic coordinates $x,y$ we have a centre coordinate.
Due to this the Lipschitz bound from Theorem \ref{th:Wu-cap-bound} is
sharper compared with \cite{CZmelnikov}. In the proof we focus on this issue.
\end{proof}

We will now discuss the contraction rate along the stable manifold $W^{u}$
from Theorem \ref{th:Wu-cap-bound}. First we shall need an auxiliary result.
Consider a flow $\Phi _{t}\left( z\right) $, for $\Phi :\mathbb{R\times R}%
^{u}\times \mathbb{R}^{s}\rightarrow \mathbb{R}^{u}\times \mathbb{R}^{s}$, 
\begin{equation*}
\Phi _{t}\left( x,y\right) =\left( \pi _{x}\Phi _{t}\left( x,y\right) ,\pi
_{y}\Phi _{t}\left( x,y\right) \right) ,
\end{equation*}%
and define the following constants \correction{21}{21}{
\begin{eqnarray}
\mu \left( h\right) &=&\sup_{z\in D}\left\{ \left\Vert \frac{\partial \pi
_{y}\Phi }{\partial y}\left( h,z\right) \right\Vert +\frac{1}{L}%
\left\Vert \frac{\partial \pi _{y}\Phi }{\partial x}\left( h,z\right)
\right\Vert \right\} ,  \label{eq:mu-h} \\
\xi \left( h\right) &=&m\left[ \frac{\partial \pi _{x}\Phi }{\partial x}%
(h,D)\right] -L\sup_{z\in D}\left\Vert \frac{\partial \pi _{x}\Phi }{%
\partial y}\left( h,z\right) \right\Vert .  \label{eq:xi-h}
\end{eqnarray}}

\begin{theorem}
\label{th:coeff-ode-map}\cite[Theorem 31]{CZmelnikov}Let $\overrightarrow{%
\xi }$ and $\overrightarrow{\mu }$ be the constants defined in (\ref%
{eq:mu-arrow-def}) and (\ref{eq:xi-arrow-def}). If $\Phi _{t}$ is the flow
induced by (\ref{eq:ode-wu-ws}), then for $h>0$%
\begin{eqnarray*}
\mu \left( h\right) &=&1+h\overrightarrow{\mu }+O\left( h^{2}\right) , \\
\xi \left( h\right) &=&1+h\overrightarrow{\xi }+O\left( h^{2}\right) .
\end{eqnarray*}
\end{theorem}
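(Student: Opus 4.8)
The plan is to tie the time-$h$ coefficients $\mu(h)$ and $\xi(h)$ to the infinitesimal coefficients $\overrightarrow{\mu}$ and $\overrightarrow{\xi}$ through the variational equation, and then to read off the first-order term in $h$ from the defining property of the logarithmic norm $l(\cdot)$.

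First I would set $M(h,z):=D_z\Phi_h(z)$, which is the value at time $h$ of the fundamental matrix solution of the variational equation $\dot M=Df(\Phi_t(z))M$, $M(0,z)=I$. For $h$ small all the points $\{\Phi_s(z):s\in[0,h],\ z\in D\}$ stay in a fixed compact set, so $Df(\Phi_s(z))$ and $M(s,z)$ are uniformly bounded there; from $M(h,z)=I+\int_0^h Df(\Phi_s(z))M(s,z)\,ds$ and the Gronwall bound $\|M(s,z)-I\|=O(s)$ one gets, uniformly in $z\in D$,
$$M(h,z)=I+h\,Df(z)+O(h^2).$$
(For a merely $C^1$ field the remainder is only $o(h)$, which would still give $1+h\overrightarrow{\mu}+o(h)$ etc.; it is genuinely $O(h^2)$ once $Df$ is locally Lipschitz, which holds here since $f$ is polynomial.) Splitting $Df(z)$ into the blocks $\frac{\partial f_x}{\partial x}(z)$, $\frac{\partial f_x}{\partial y}(z)$, $\frac{\partial f_y}{\partial x}(z)$, $\frac{\partial f_y}{\partial y}(z)$, the corresponding blocks of $M(h,z)$ are exactly $\frac{\partial\pi_x\Phi}{\partial x}(h,z)$, $\frac{\partial\pi_x\Phi}{\partial y}(h,z)$, $\frac{\partial\pi_y\Phi}{\partial x}(h,z)$, $\frac{\partial\pi_y\Phi}{\partial y}(h,z)$, so that the diagonal blocks are $I+h\frac{\partial f_x}{\partial x}(z)+O(h^2)$ and $I+h\frac{\partial f_y}{\partial y}(z)+O(h^2)$ while the off-diagonal ones are $h\frac{\partial f_x}{\partial y}(z)+O(h^2)$ and $h\frac{\partial f_y}{\partial x}(z)+O(h^2)$, uniformly in $z\in D$.

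Next I would record three elementary asymptotics, each stable under an $O(h^2)$ perturbation and uniform as the matrix $A$ ranges over a compact set: (i) $\|I+hA+O(h^2)\|=1+h\,l(A)+O(h^2)$, which is just the definition of $l(A)$ up to the quadratic correction — in fact exact for the $1$- and $\infty$-operator norms, and $O(h^2)$ for the Euclidean one by a first-order perturbation of $\lambda_{\max}\big((I+hA)^\top(I+hA)\big)$; (ii) $m(I+hA+O(h^2))=1+h\,m_l(A)+O(h^2)$, obtained from (i) applied to $(I+hA)^{-1}=I-hA+O(h^2)$ together with $m_l(A)=-l(-A)$; and (iii) $\|hA+O(h^2)\|=h\|A\|+O(h^2)$. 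Feeding the block expansions into the definition of $\mu(h)$, estimate (i) with $A=\frac{\partial f_y}{\partial y}(z)$ and (iii) with $A=\frac{\partial f_y}{\partial x}(z)$ give that the bracketed quantity equals $1+h\big(l(\frac{\partial f_y}{\partial y}(z))+\frac{1}{L}\|\frac{\partial f_y}{\partial x}(z)\|\big)+O(h^2)$ uniformly in $z$, and since this remainder is uniform, taking $\sup_{z\in D}$ yields $\mu(h)=1+h\overrightarrow{\mu}+O(h^2)$. For $\xi(h)$, every matrix of the interval enclosure $\big[\frac{\partial\pi_x\Phi}{\partial x}(h,D)\big]$ has the form $I+hC+O(h^2)$ with $C\in\big[\frac{\partial f_x}{\partial x}(D)\big]$, so (ii) and the definition $m_l\big(\frac{\partial f_x}{\partial x}(D)\big)=\inf_C m_l(C)$ give $m\big[\frac{\partial\pi_x\Phi}{\partial x}(h,D)\big]=1+h\,m_l\big(\frac{\partial f_x}{\partial x}(D)\big)+O(h^2)$; combining with $L\sup_{z\in D}\|\frac{\partial\pi_x\Phi}{\partial y}(h,z)\|=hL\|\frac{\partial f_x}{\partial y}(D)\|+O(h^2)$ from (iii) gives $\xi(h)=1+h\overrightarrow{\xi}+O(h^2)$.

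The only genuinely delicate points are the uniformity in $z\in D$ of the $O(h^2)$ (or $o(h)$) remainder coming from the variational equation, and the careful matching of the $\sup$/$\inf$ taken over $D$ with those taken over the interval enclosures $[\,\cdot\,(D)]$ when passing between the pointwise and the enclosure forms of the constants; both are exactly the content of \cite[Theorem 31]{CZmelnikov}, so in the paper I would either quote that result or reproduce the short computation above. Everything else is block-matrix bookkeeping plus the definition of the logarithmic norm, and I expect no further obstacle.
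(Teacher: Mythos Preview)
The paper does not give its own proof of this theorem: it is stated with the citation \cite[Theorem~31]{CZmelnikov} and used as a black box. So there is nothing in the paper to compare against beyond the reference.

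Your sketch is the standard argument one expects behind such a result and is essentially correct: expand the fundamental matrix $D_z\Phi_h(z)=I+h\,Df(z)+O(h^2)$ via the variational equation, split into the $(x,y)$ blocks, and then use the defining asymptotics $\|I+hA\|=1+h\,l(A)+O(h^2)$, $m(I+hA)=1+h\,m_l(A)+O(h^2)$, $\|hA\|=h\|A\|+O(h^2)$, all uniform over the compact parameter set. Your caveat that a merely $C^1$ field only yields an $o(h)$ remainder is well placed; in the paper's application $f$ is polynomial so $O(h^2)$ is legitimate. The one point worth tightening if you were to write this out in full is the passage between the \emph{pointwise} quantities $\frac{\partial\pi_x\Phi}{\partial x}(h,z)$ and the \emph{interval enclosure} $\bigl[\frac{\partial\pi_x\Phi}{\partial x}(h,D)\bigr]$: the latter is the coordinatewise hull, so one has to argue that each matrix in it is $I+hC+O(h^2)$ for some $C$ in $\bigl[\frac{\partial f_x}{\partial x}(D)\bigr]$ and conversely, with uniform remainders; you flag this yourself, and it is exactly the bookkeeping that the cited theorem packages.
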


\begin{theorem}
\label{th:contraction-bound}Let $W^{u}$ be the manifold established in
Theorem \ref{th:Wu-cap-bound}. Then for any $p_{1},p_{2}\in W^{u}$%
\begin{equation*}
\left\Vert \Phi _{-t}\left( p_{1}\right) -\Phi _{-t}\left( p_{2}\right)
\right\Vert \leq ce^{-\overrightarrow{\xi }t}\left\Vert \pi _{x}\left(
p_{1}-p_{2}\right) \right\Vert \qquad \text{for all }t\geq 0,
\end{equation*}%
for $c=2\sqrt{1+L^{2}}.$
\end{theorem}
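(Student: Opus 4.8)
The plan is to bound the contraction rate along $W^u$ by splitting the displacement $\Phi_{-t}(p_1) - \Phi_{-t}(p_2)$ into its $x$-component and $y$-component, controlling the $x$-component by $\overrightarrow{\xi}$ via the graph-transform/cone structure and then using the Lipschitz bound on $w^u$ to dominate the $y$-component by the $x$-component. Since $W^u$ is invariant under the backward flow (by its definition as the set of points staying in $D$ for all negative time), for $p_1,p_2\in W^u$ we have $\Phi_{-t}(p_i)=(x_i(t), w^u(x_i(t)))$ where $x_i(t):=\pi_x\Phi_{-t}(p_i)$. Hence
\begin{equation*}
\|\Phi_{-t}(p_1)-\Phi_{-t}(p_2)\| \le \|x_1(t)-x_2(t)\| + \|w^u(x_1(t))-w^u(x_2(t))\| \le \sqrt{1+L^2}\,\|x_1(t)-x_2(t)\|,
\end{equation*}
using that $w^u$ is Lipschitz with constant $L$ (Theorem \ref{th:Wu-cap-bound}) and combining the two Euclidean components. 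So everything reduces to proving $\|x_1(t)-x_2(t)\| \le 2\, e^{-\overrightarrow{\xi} t}\,\|\pi_x(p_1-p_2)\|$ — actually, the cleaner target is $\|x_1(t)-x_2(t)\| \le \sqrt{1+L^2}\,e^{-\overrightarrow{\xi} t}\|\pi_x(p_1-p_2)\|$, which after the display above yields the constant $c=2\sqrt{1+L^2}$ (one factor $\sqrt{1+L^2}$ from each side), or one tunes the bookkeeping to land exactly on $c=2\sqrt{1+L^2}$.

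The core estimate on the $x$-component is where Theorems \ref{th:coeff-ode-map} and the quantity $\xi(h)$ from (\ref{eq:xi-h}) enter. I would work on the small-time map $\Phi_h$ for a fixed small step $h>0$, iterate, and pass to the limit. Fix a step $h$; write $q_i = \Phi_{-(n h)}(p_i)\in W^u$, so $q_i = (\chi_i, w^u(\chi_i))$. Applying $\Phi_{-h}$ and using the mean value / interval bound on the derivative of the time-$(-h)$ map over $D$, one gets
\begin{equation*}
\|\pi_x \Phi_{-h}(q_1) - \pi_x\Phi_{-h}(q_2)\| \ge m\!\left[\tfrac{\partial \pi_x\Phi}{\partial x}(-h,D)\right]\|\chi_1-\chi_2\| - \sup_{z\in D}\left\|\tfrac{\partial \pi_x\Phi}{\partial y}(-h,z)\right\|\,\|w^u(\chi_1)-w^u(\chi_2)\|,
\end{equation*}
and since $\|w^u(\chi_1)-w^u(\chi_2)\|\le L\|\chi_1-\chi_2\|$, the right-hand side is $\ge \xi(-h)\|\chi_1-\chi_2\|$ — wait, one must be careful that $\xi(h)$ as defined uses the forward flow; running the argument forward in time (i.e. comparing $q_i$ with $\Phi_h$ of the preimage) is the right way, so that the $x$-coordinate is \emph{expanded} by at least $\xi(h)$ per step under the forward flow, hence \emph{contracted} by at least $\xi(h)$ per step under $\Phi_{-h}$. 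Iterating $n=\lfloor t/h\rfloor$ times gives $\|x_1(nh)-x_2(nh)\| \le \xi(h)^{-n}\|\pi_x(q_1^{(0)}-q_2^{(0)})\|$ where $q_i^{(0)}=p_i$, and $\pi_x(p_1-p_2)$ may differ from $\pi_x q_i$ by a bounded factor — here again the Lipschitz bound and the fact that $p_i\in W^u$ keep things controlled, contributing the second $\sqrt{1+L^2}$. Finally, by Theorem \ref{th:coeff-ode-map}, $\xi(h)=1+h\overrightarrow{\xi}+O(h^2)$, so $\xi(h)^{-\lfloor t/h\rfloor}\to e^{-\overrightarrow{\xi} t}$ as $h\to 0^+$, and taking the limit yields the claimed bound.

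The main obstacle, I expect, is the bookkeeping of constants so that the final constant is exactly $c=2\sqrt{1+L^2}$: one has to be careful about (i) the difference between the intrinsic distance along $W^u$ (measured in the $x$-coordinate) and the ambient Euclidean distance, which costs a factor $\sqrt{1+L^2}$ at the start and again at the end, and (ii) ensuring the per-step estimate really is governed by $\xi(h)$ and not a slightly weaker quantity after the graph substitution — this is exactly the point where the rate condition $\overrightarrow{\xi}>0$ and the Lipschitz constant $L$ interact, and it must be checked that the isolating-block property guarantees the trajectories $x_i(t)$ stay in $\overline B_u(R)$ so the derivative bounds over $D$ are legitimate for all $t\ge 0$. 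Passing from the discrete iteration to the continuous-time exponential is then a routine limit using Theorem \ref{th:coeff-ode-map}, with no further obstacle.
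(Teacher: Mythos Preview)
Your approach is essentially the same as the paper's: reduce to the $x$-component via the Lipschitz bound on $w^u$, obtain the per-step expansion $\xi(h)$ for the forward flow, iterate and pass to the limit using Theorem~\ref{th:coeff-ode-map}, then invert to get contraction under $\Phi_{-t}$.

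The only point where you diverge from the paper is the constant bookkeeping, and there your worry is unfounded. The paper actually obtains the sharper constant $\sqrt{1+L^2}$ (the factor $2$ in the statement is slack): since the right-hand side of the theorem already uses $\|\pi_x(p_1-p_2)\|$, no conversion is needed at $t=0$. The forward-time estimate
\[
\|\pi_x(\Phi_h(q_1)-\Phi_h(q_2))\| \;\ge\; \xi(h)\,\|\pi_x(q_1-q_2)\|
\]
holds with no extra factor because the Lipschitz bound $\|\pi_y(q_1-q_2)\|\le L\|\pi_x(q_1-q_2)\|$ is exactly what is absorbed into the term $-L\sup\|\partial\pi_x\Phi/\partial y\|$ in the definition of $\xi(h)$. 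Iterating and inverting gives $\|\pi_x(\Phi_{-T}(p_1)-\Phi_{-T}(p_2))\|\le e^{-\overrightarrow{\xi}T}\|\pi_x(p_1-p_2)\|$ with constant $1$; the single factor $\sqrt{1+L^2}$ then enters only once, at the end, when passing from $\pi_x$-distance to full distance for $\Phi_{-T}(p_i)$. So your anticipated ``second $\sqrt{1+L^2}$ at the start'' does not arise.
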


\begin{proof}
Let $p_{1},p_{2}\in W^{u}$. Since $w^{u}$ is Lipschitz with constant $L$, 
\begin{equation}
\left\Vert \pi _{y}\left[ p_{1}-p_{2}\right] \right\Vert \leq L\left\Vert
\pi _{x}\left[ p_{1}-p_{2}\right] \right\Vert .  \label{eq:Lip-ineq-tmp-1}
\end{equation}

Let $q_{1},q_{2}\in W^{u}$. If $\Phi _{t}(q_{1}),\Phi _{t}(q_{2})\in D$ for $%
t\in (0,T]$, then for $0<h\leq T$ holds 
\begin{eqnarray*}
\Vert \pi _{x}(\Phi _{h}(q_{1})-\Phi _{h}(q_{2}))\Vert &\geq &\xi (h)\Vert
\pi _{x}(q_{1}-q_{2})\Vert \\
&=&(1+h\overrightarrow{\xi }+O(h^{2}))\Vert \pi _{x}(q_{1}-q_{2})\Vert .
\end{eqnarray*}%
If we take $h=\frac{T}{N}$, then 
\begin{eqnarray*}
\Vert \pi _{x}(\Phi _{T}(q_{1})-\Phi _{T}(q_{2}))\Vert &\geq &(1+h%
\overrightarrow{\xi }+O(h^{2}))^{N}\Vert \pi _{x}(q_{1}-q_{2})\Vert \\
&\rightarrow &e^{T\overrightarrow{\xi }}\Vert \pi _{x}(q_{1}-q_{2})\Vert
,\quad N\rightarrow \infty .
\end{eqnarray*}%
Observe that from the above it follows that (we set $p_{i}=\Phi _{-T}(q_{i})$%
) 
\begin{equation}
\Vert \pi _{x}(p_{1}-p_{2})\Vert \geq e^{T\overrightarrow{\xi }}\Vert \pi
_{x}(\Phi _{-T}(p_{1})-\Phi _{-T}p_{2})\Vert
\label{eq:contraction-bound-tmp-1}
\end{equation}%
for any $T>0$.

Using (\ref{eq:Lip-ineq-tmp-1}) in the third line and (\ref%
{eq:contraction-bound-tmp-1}) in the last line,%
\begin{eqnarray*}
&&\left\Vert \Phi _{-T}\left( p_{1}\right) -\Phi _{-T}\left( p_{2}\right)
\right\Vert ^{2} \\
&=&\left\Vert \pi _{x}\left[ \Phi _{-T}\left( p_{1}\right) -\Phi _{-T}\left(
p_{2}\right) \right] \right\Vert ^{2}+\left\Vert \pi _{y}\left[ \Phi
_{-T}\left( p_{1}\right) -\Phi _{-T}\left( p_{2}\right) \right] \right\Vert
^{2} \\
&\leq &\left( 1+L^{2}\right) \left\Vert \pi _{x}\left[ \Phi _{-T}\left(
p_{1}\right) -\Phi _{-T}\left( p_{2}\right) \right] \right\Vert ^{2} \\
&\leq &\left( 1+L^{2}\right) e^{-2T\overrightarrow{\xi }}\left\Vert \pi _{x}%
\left[ p_{1}-p_{2}\right] \right\Vert ^{2},
\end{eqnarray*}%
which concludes the proof.
\end{proof}

\begin{remark}
Theorems \ref{th:Wu-cap-bound}, \ref{th:contraction-bound} can also be
applied to establish bounds on the stable manifold. In order to do so, it is
enough to consider $p^{\prime }=-f(p)$ instead of (\ref{eq:ode-wu-ws}), and
to swap the roles of the coordinates $x,y$. The unstable manifold for the
vector field $-f$ is the stable manifold for $f$.
\end{remark}

\subsection{Computer assisted proof of homoclinic intersection\label%
{sec:CAPhomoclinic}}

In this section we give an overview of the computer assisted proof of
Theorem \ref{th:homoclinic}.

To apply the method from Sections \ref{sec:homoclinic}, \ref{sec:Wu-bounds}
to conduct a computer assisted proof we follow the steps, as outlined in 
\cite{CW}: \medskip

\noindent \textbf{Algorithm 1.}

\begin{enumerate}
\item \label{step:Wu} In local coordinates around zero, using Theorem \ref%
{th:Wu-cap-bound}, establish the bounds on the unstable manifolds for the
family of vector fields .

\item By changing sign of the vector field, using the same procedure as in
step \ref{step:Wu}, establish bounds on the stable manifolds.

\item Propagate the bounds on the unstable manifold along the flow, and
establish the homoclinic intersection using Theorem \ref%
{th:homoclinic-existence}.
\end{enumerate}

To obtain bounds for the stable/unstable manifolds, we use the local
coordinates $(x,y_{1},y_{2})$, 
\begin{equation}
\left( X,Y,Z\right) =C\left( x,y_{1},y_{2}\right) ,  \label{eq:local-coord}
\end{equation}%
with, 
\begin{equation*}
C=\left( 
\begin{array}{ccc}
1 & -0.36706363121968 & 0 \\ 
1 & 1 & 0 \\ 
0 & 0 & 1%
\end{array}%
\right) .
\end{equation*}
Note that $\left\Vert C\right\Vert \approx 1.527\,8$ and $\left\Vert
C^{-1}\right\Vert \approx 1.117\,5$.

Coordinates $x,y_{1},y_{2}$ align the system (\ref{eq:S-M-ode}) so that $x$ is the (rough) unstable
direction, and $y_{1},y_{2}$ are (roughly) stable. Note that we use the same
local coordinates for all the parameters $a$.


To obtain the bound on $W^u$ we choose 
\begin{equation*}
D=\overline{B}_{u}\left( R\right) \times \overline{B}_{s}\left( R\right) ,
\end{equation*}%
with $R=10^{-5}$, and use Theorem \ref{th:Wu-cap-bound} to obtain an
enclosure of the unstable manifold $W^{u}$. In our computer assisted proof,
we have a Lipschitz bound $L=4\cdot 10^{-5}$ for the slope of the manifold
for all parameters $a\in A$. See Figure \ref{Fig:Wu}. (Note the scale on the
axes. The enclosure is in fact quite sharp.) Applying Theorem \ref%
{th:contraction-bound} we obtain%
\begin{eqnarray*}
\xi _{1} &=&0.99999999967813, \\
c &=&2\sqrt{1+L^{2}}.
\end{eqnarray*}%
For any trajectory $\left( X(t),Y(t),Z(t)\right) $ starting from a point $%
p=C\left( x,w^{u}(x)\right), $ for $t\leq 0$, holds%
\begin{equation*}
\left\vert X\left( t\right) \right\vert ,\left\vert Y\left( t\right)
\right\vert ,\left\vert Z\left( t\right) \right\vert \leq c\left\Vert
C\right\Vert \left\Vert C^{-1}\right\Vert e^{-\xi _{1}\left\vert
t\right\vert }\left\Vert p\right\Vert <3.5e^{-\xi \left\vert t\right\vert
}\left\Vert p\right\Vert .
\end{equation*}

\begin{figure}[tbp]
\begin{center}
\includegraphics[height=6cm]{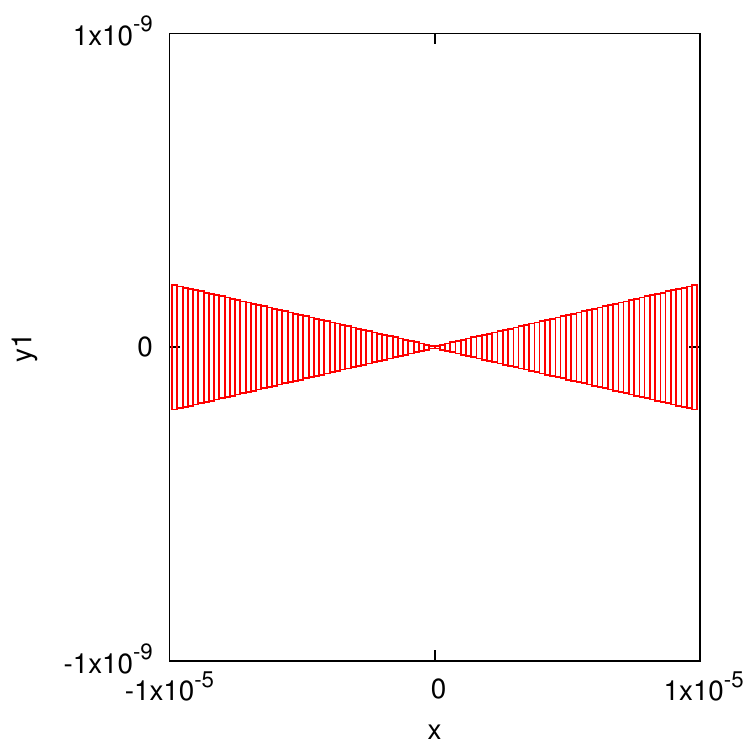}
\end{center}
\caption{The projection onto $x,y_{1}$ coordinates of the bounds on $%
W_{a}^{u}$. }
\label{Fig:Wu}
\end{figure}

To establish the bounds for the two dimensional stable manifold $W^{s}$, we
consider the vector field (\ref{eq:S-M-ode}) with reversed sign (this means that we also swap the roles of the coordinates $x$, $y_1$, $y_2$), and
apply Theorem \ref{th:Wu-cap-bound} once again. We use the same Lipschitz
bound $L$. We consider all the parameters $a\in A.$ We do so by subdividing $%
A$ into several intervals, and performing the interval arithmetic enclosure
of $W^{s}$; we use the interval enclosure of the map $f$ with the intervals
on $a$. In Figure \ref{fig:Ws} we see the bound on the enclosure. From
Theorem \ref{th:contraction-bound} we obtain%
\begin{equation*}
\xi _{2}=0.99998045374688.
\end{equation*}%
Thus, for any trajectory $\left( X(t),Y(t),Z(t)\right) $ starting from a
point $p=C\left( w^{s}(y_{1},y_{2}),y_{1},y_{2}\right) $ holds%
\begin{equation*}
\left\vert X\left( t\right) \right\vert ,\left\vert Y\left( t\right)
\right\vert ,\left\vert Z\left( t\right) \right\vert \leq c\left\Vert
C\right\Vert \left\Vert C^{-1}\right\Vert e^{-\xi _{2}t}\left\Vert
p\right\Vert <3.5e^{-\xi t}\left\Vert p\right\Vert \qquad \text{for }t\geq 0.
\end{equation*}

We now take $T=26$. The two rectangles in Figure \ref{fig:Ws} are the $\Phi
_{T}\left( Cp_{a}^{u},a\right) $ for $a=a_{l}$ and $a=a_{r}$ (see (\ref%
{eq:pu-theta-def}) for the definition of $p_{u}^{u}$).  Note
that Figure \ref{fig:Ws} corresponds to the sketch from Figure \ref%
{fig:homoclinic-2}. In Figure \ref{fig:Ws} we have the projection onto $%
x,y_{1}$ coordinates of what happens inside of the set $D$, without plotting
the trajectory along the unstable manifold. Figure \ref%
{fig:original-coordinates} presents the bounds, plotted in the original
coordinates of the system.

We use the rigorous estimates for $\Phi _{T}\left(
Cp_{a_{l}}^{u},a_{l}\right) $ and $\Phi _{T}\left(
Cp_{a_{r}}^{u},a_{r}\right) $ to compute the following bounds (see (\ref%
{eq:h-function-def}) for the definition of the function $h$,)%
\begin{eqnarray*}
h\left( a_{l}\right) & \in \lbrack 1.5093787863274e-09,3.9653443827625e-09],
\\
h\left( a_{r}\right) & \in \left[ -3.9570292285809e-09,-1.51777586447e-09%
\right] .
\end{eqnarray*}%
We also make sure that $\Phi _{T}\left( Cp_{a}^{u},a\right) \in D$ for all $%
a\in A$. We see that assumption (\ref{eq:Bolzano-assmpt}) of Theorem \ref%
{th:homoclinic-existence} is satisfied, which means that we have a
homoclinic connection for at least one of the parameters $a\in A$.

We have thus established a homoclinic orbit 
\begin{equation*}
\left( X_{0}(t),Y_{0}(t),Z_{0}(t)\right) =\Phi _{t}\left(
Cp_{a_{0}}^{u},a_{0}\right) ,
\end{equation*}%
for some $a_{0}\in A$.

\begin{figure}[tbp]
\begin{center}
\includegraphics[height=4cm]{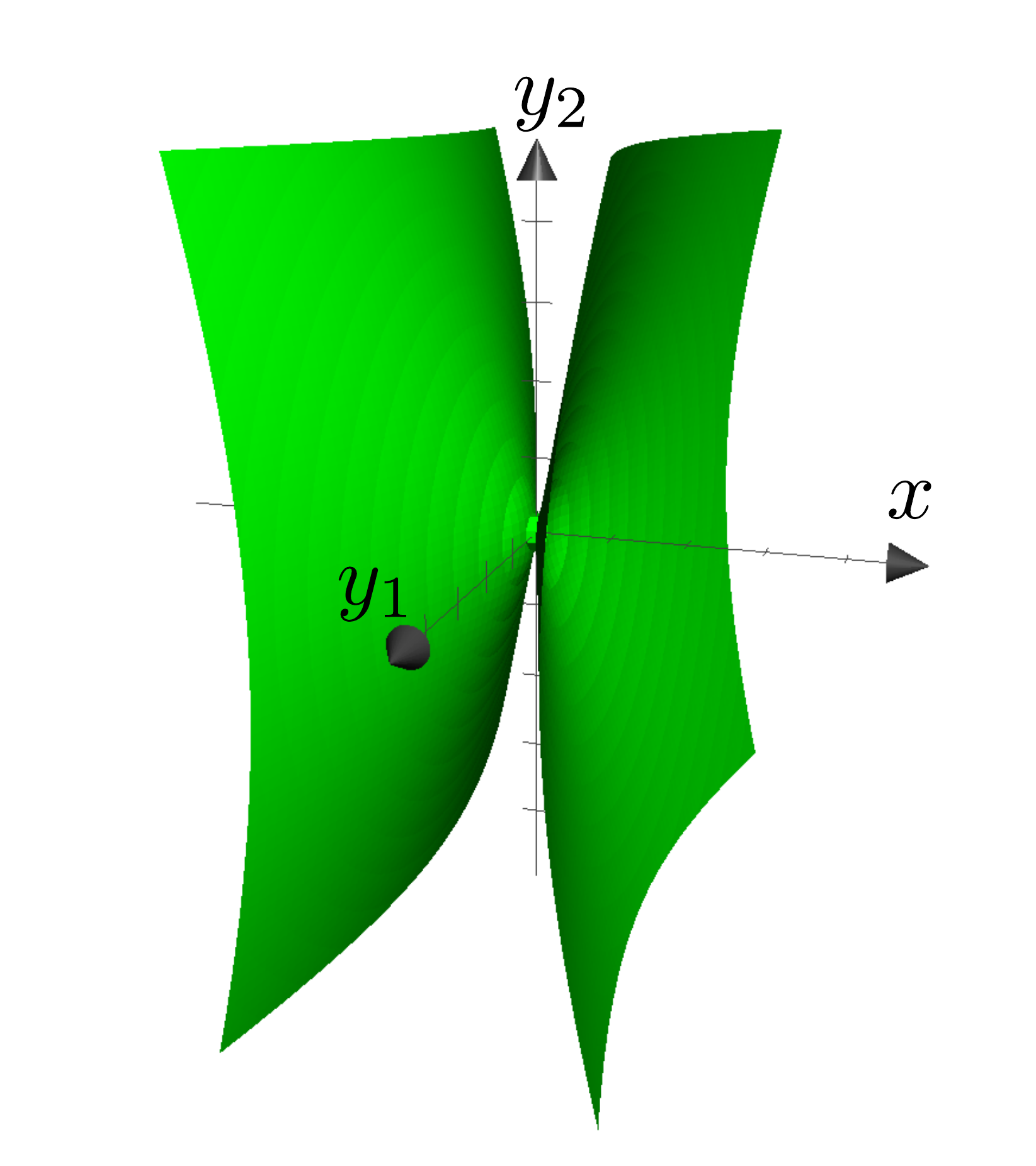}%
\includegraphics[height=4cm]{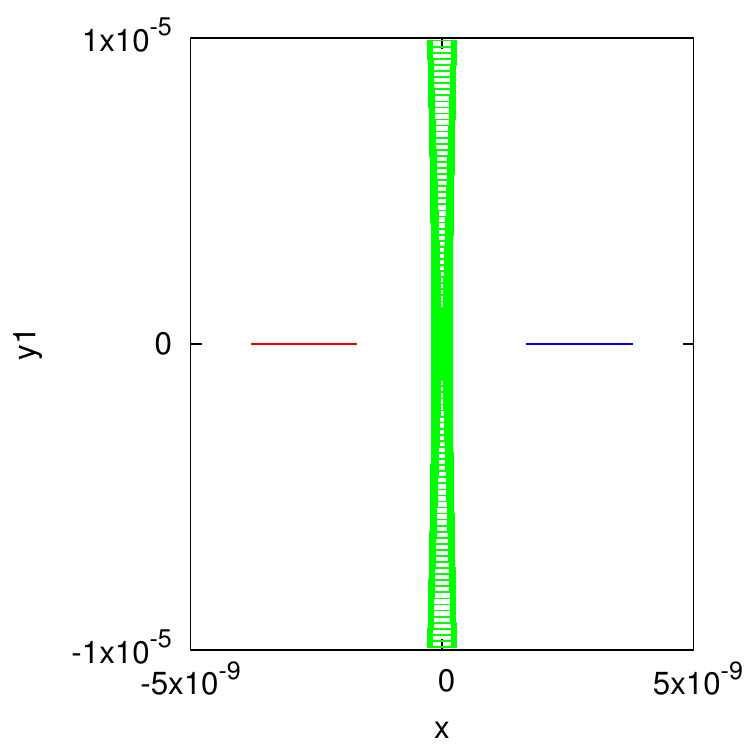} %
\includegraphics[height=4cm]{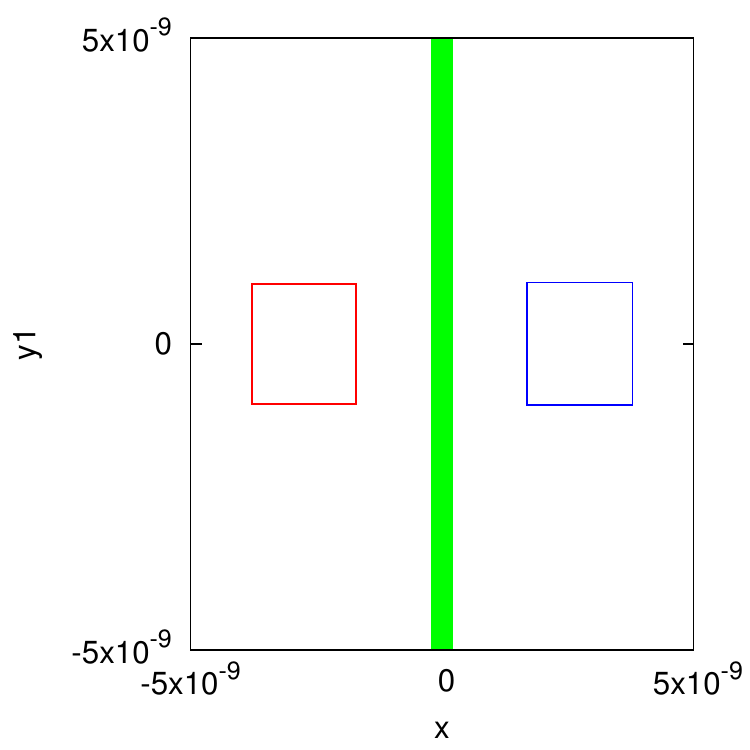}
\end{center}
\caption{The bound on $W_{a}^{s}$, for all parameters $a$. On the left we
have a non-rigorous plot, to illustrate the shape of our bound in three
dimensions. In the middle and on the right, we have a projection onto the $%
x,y_{1}$ coordinates of the rigorous, computer assisted enclosure. The two
rectangles depicted on the right hand side plots are $\Phi _{T}\left(
p_{a_{l}}^{u},a_{l}\right) $ (on the left, in red) and $\Phi _{T}\left(
p_{a_{r}}^{u},a_{r}\right) $ (on the right, in blue). All these plots are in
the local coordinates.}
\label{fig:Ws}
\end{figure}

\begin{figure}[tbp]
\begin{center}
\includegraphics[height=6cm]{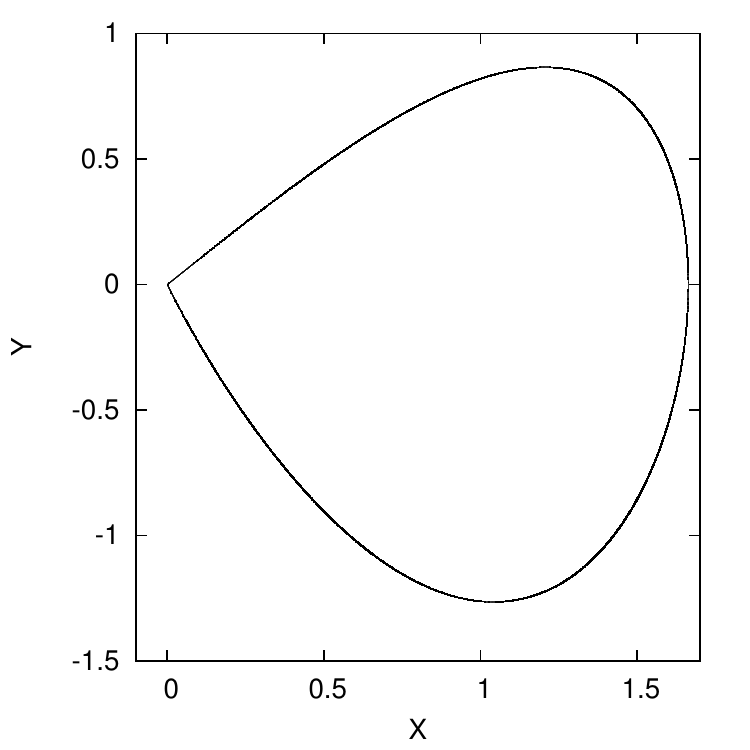}%
\includegraphics[height=6cm]{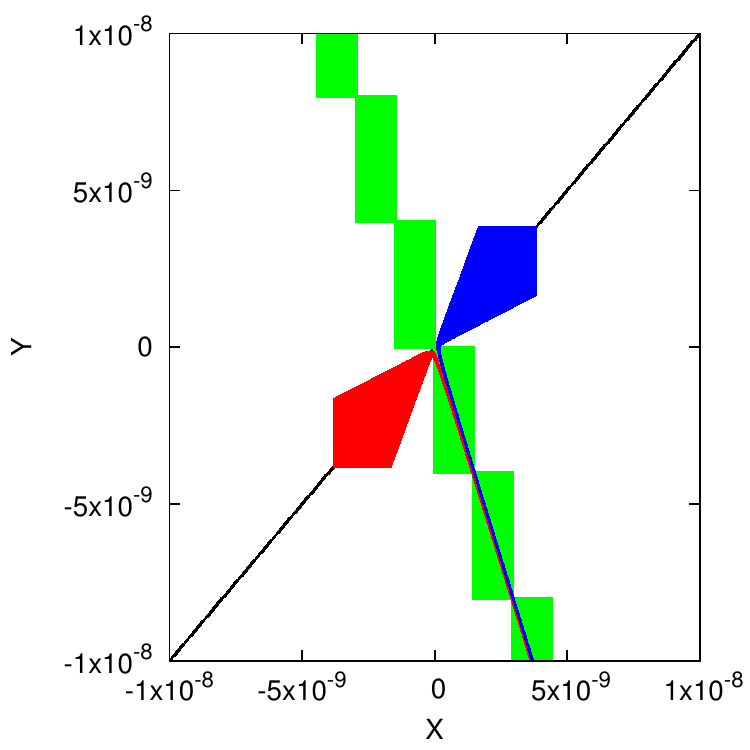}
\end{center}
\caption{The bound on $W_{a}^{u}$, for all parameters $a\in [a_l,a_r]$ on
the left (in black). On the right, we have the bound on $W_{a}^{u}$ in
black, valid for all $a\in [a_l,a_r]$. A trajectory along $W_{a}^{u}$, which
leaves the neighbourhood by the top right corner, returns through the lower
edge of the plot. In red, we have the $W_{a_l}^{u}$, as it returns to the
neighbourhood, and in blue we have $W_{a_r}^{u}$. In green is the bound on $%
W_{a}^{s}$, for all parameters $a\in [a_l,a_r]$. The bound on $W_{a}^{s}$ in
local coordinates is in fact is much tighter than on this plot. It is a
strip that passes through the intersections of the green rectangles. The
plots are in the original coordinates of the system, projected onto the $X,Y$
coordinates.}
\label{fig:original-coordinates}
\end{figure}

The computer assisted proof has been done entirely by using the CAPD%
\footnote[1]{%
computer assisted proofs in dynamics: http://capd.ii.uj.edu.pl/} package and
took under a second on a single core 3Ghz Intel i7 processor.


\section{\textbf{Computation of the separatrix value}}

\label{sec:sep-val} In this section we will show how we prove Theorem \ref%
{th:sep-val}. First we will describe the method. We shall investigate the
following ODE%
\begin{equation}
\gamma ^{\prime }\left( t\right) =\left( A+B\left( t\right) \right) \gamma
\left( t\right) ,  \label{eq:ode}
\end{equation}%
and assume that $a_{22},a_{33}>0$ and that%
\begin{equation*}
A=\left( 
\begin{array}{lll}
0 & 0 & 0 \\ 
0 & -a_{22} & 0 \\ 
0 & 0 & -a_{33}%
\end{array}%
\right) ,\qquad B\left( t\right) =\left( 
\begin{array}{lll}
b_{11}\left( t\right) & b_{12}\left( t\right) & b_{13}\left( t\right) \\ 
b_{21}\left( t\right) & b_{22}\left( t\right) & b_{23}\left( t\right) \\ 
b_{31}\left( t\right) & b_{32}\left( t\right) & b_{33}\left( t\right)%
\end{array}%
\right) .
\end{equation*}%
We assume also that there exist constants $c_{b},\lambda >0$ for which 
\begin{equation}
|b_{ij}(t)|\leq c_{b}e^{-\lambda \left\vert t\right\vert }.
\label{eq:lin-contraction-bounds}
\end{equation}

\begin{remark}
In section \ref{sec:CAP-separatrix-value}, where we will apply the method to
the Shimizu-Marioka system (\ref{eq:S-M-ode}), the constant $c_{b}$ from (%
\ref{eq:lin-contraction-bounds}) will be associated with the constant $c$
from Theorem \ref{th:homoclinic}, with the size of the neighborhood of zero
in which we investigate the rate of convergence along the homoclinic, and
also on some coordinate changes.
\end{remark}

\begin{remark}
\correction{22}{22}{Equation (\ref{eq:ode}) }can be seen as (\ref{eq:S-M-ode2}) considered in
appropriate local coordinates. This will be our approach when applying the
results from this section for the proof of Theorem \ref{th:sep-val}.
\end{remark}

In the sequel in our investigations of the problem (\ref{eq:ode}) we will
use the variables $(x,y_{1},y_{2})$ and quite often we will also write $%
y=(y_{1},y_{2})$, so that our coordinates will be $(x,y)$, where $x\in 
\mathbb{R}$ and $y\in \mathbb{R}^{2}$.

Our objective will be to obtain a proof of an orbit $\gamma ^{\ast }\left(
t\right) $ of (\ref{eq:ode}) for which%
\begin{eqnarray}
\lim_{t\rightarrow -\infty }\gamma ^{\ast }\left( t\right) & =(x_{-}^{\ast
},0,0),  \label{eq:x-minus-def} \\
\lim_{t\rightarrow +\infty }\gamma ^{\ast }\left( t\right) & =(x_{+}^{\ast
},0,0),  \label{eq:x-plus-def}
\end{eqnarray}%
and also to obtain explicit bounds on the fraction $\frac{x_{+}^{\ast }}{%
x_{-}^{\ast }}$.

To understand the behavior of (\ref{eq:ode}) for large $|t|$, we treat the
line $\{(x,0,0),\,x\in \mathbb{R}\}$ as the normally hyperbolic manifold,
with two stable directions ($y_{1}$ and $y_{2}$). This is clearly visible in (%
\ref{eq:ode}) with $B\equiv 0$, but for $|t|$ large $B$ is just a small
perturbation so the normally hyperbolic behavior will survive. A bit
problematic in this picture is the fact that we have here an non-autonomous
system, so the above scenario needs some adjustments. This is the idea,
which underlines our approach.

\begin{definition}
\label{def:LapFunc} Let $Z\subset \mathbb{R}\times \mathbb{R}^{2}$ and $%
J\subset \mathbb{R}$. Let $V:Z\rightarrow \mathbb{R}$ be a $C^{1}$ function.
We say that $V$ is a Lyapunov function on $J\times Z$ for (\ref{eq:ode}) if
for any solution $z:J\rightarrow \mathbb{R}\times \mathbb{R}^{2}$ of (\ref%
{eq:ode}), $V(z(t))$ is decreasing ($\frac{d}{dt}V(z(t))<0$) for $z(t)\in Z$
and $t\in J$.
\end{definition}

We have 
\begin{equation}
\frac{d}{dt}V(z(t))=((A+B(t))z(t)|\nabla V(z(t))).  \label{eq:derAlongSol}
\end{equation}

We start with two technical lemmas: 
\begin{figure}[tbp]
\begin{center}
\includegraphics[height=5cm]{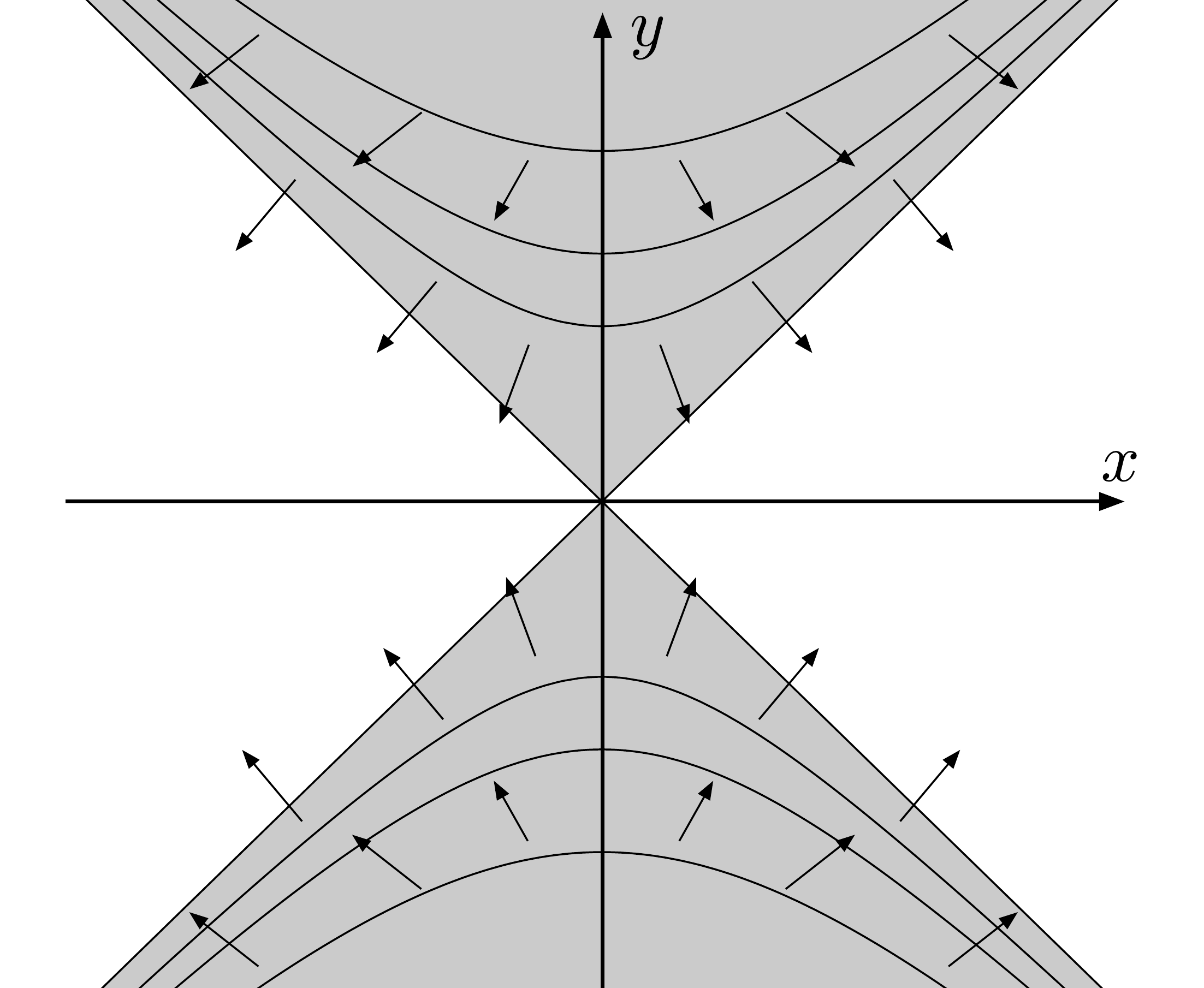}
\end{center}
\caption{The set $D$ from Lemma \protect\ref{lem:Lap-f2} is the vertical
cone. The curved lines are the level sets of $V$. The arrows indicate the
vector field of (\protect\ref{eq:ode}).}
\label{fig:Lyap1}
\end{figure}

\begin{lemma}
\label{lem:Lap-f2} Let $D=\left\{ \left\Vert y\right\Vert ^{2}\geq
x^{2}\right\} \setminus \{0\}$ (see Figure \ref{fig:Lyap1}). Let $J\subset 
\mathbb{R}$ be a set of all $t\in \mathbb{R}$, for which the following
condition holds 
\begin{equation}
6c_{b}e^{-\lambda \left\vert t\right\vert }<\min \left( a_{22},a_{33}\right)
.  \label{eq:t-cond1}
\end{equation}%
Then \correction{23}{23}{
\begin{eqnarray*}
V& :\mathbb{R\times R}^{2}\rightarrow \mathbb{R}, \\
V\left( x,y\right) & =\left\Vert y\right\Vert ^{2}-x^{2}
\end{eqnarray*}}
is a Lyapunov function for (\ref{eq:ode}) on $J\times D$.
\end{lemma}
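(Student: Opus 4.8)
The plan is to show that along any solution $z(t)=(x(t),y(t))$ of (\ref{eq:ode}) with $z(t)\in D$ and $t\in J$, the derivative $\frac{d}{dt}V(z(t))$ is strictly negative. Using (\ref{eq:derAlongSol}) together with $\nabla V(x,y) = (-2x, 2y_1, 2y_2)$, I would write
\[
\tfrac{d}{dt}V(z(t)) = \bigl((A+B(t))z\,\big|\,\nabla V(z)\bigr) = \bigl(Az\,\big|\,\nabla V(z)\bigr) + \bigl(B(t)z\,\big|\,\nabla V(z)\bigr).
\]
The first term is the ``autonomous'' contribution: since $A=\mathrm{diag}(0,-a_{22},-a_{33})$, we get $\bigl(Az\,\big|\,\nabla V(z)\bigr) = -2a_{22}y_1^2 - 2a_{33}y_2^2 \le -2\min(a_{22},a_{33})\,\|y\|^2$, which is strictly negative off the $x$-axis (and in particular on $D$, where $\|y\|\ge |x|>0$ forces $\|y\|>0$). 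The second term is the perturbation, which I must bound so that (\ref{eq:t-cond1}) makes it dominated by the first.

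First I would estimate $\bigl|\bigl(B(t)z\,\big|\,\nabla V(z)\bigr)\bigr|$. Writing it out, $\bigl(B(t)z\,\big|\,\nabla V(z)\bigr) = -2x\,(B(t)z)_1 + 2y_1\,(B(t)z)_2 + 2y_2\,(B(t)z)_3$, so by (\ref{eq:lin-contraction-bounds}) each entry of $B(t)z$ is bounded by $c_b e^{-\lambda|t|}(|x|+|y_1|+|y_2|) \le c_b e^{-\lambda|t|}\cdot\sqrt{3}\,\|z\|$ (or, more simply, $c_b e^{-\lambda|t|}\cdot 3\max(|x|,|y_1|,|y_2|)$; I will use whichever constant makes the bookkeeping cleanest to arrive at the factor $6$). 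On $D$ we have $|x|\le\|y\|$, hence $\|z\|^2 = x^2+\|y\|^2 \le 2\|y\|^2$, and likewise $|x|\le\|y\|$ lets me absorb every occurrence of $|x|$ into $\|y\|$. Combining, $\bigl|\bigl(B(t)z\,\big|\,\nabla V(z)\bigr)\bigr| \le 6 c_b e^{-\lambda|t|}\,\|y\|^2$ after collecting the numerical constants from the $2$ in $\nabla V$, the three terms, and the bound $|x|\le\|y\|$.

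Putting the two estimates together,
\[
\tfrac{d}{dt}V(z(t)) \le -2\min(a_{22},a_{33})\,\|y\|^2 + 6c_b e^{-\lambda|t|}\,\|y\|^2 = \bigl(6c_b e^{-\lambda|t|} - 2\min(a_{22},a_{33})\bigr)\|y\|^2,
\]
and for $t\in J$ condition (\ref{eq:t-cond1}) gives $6c_b e^{-\lambda|t|} < \min(a_{22},a_{33}) < 2\min(a_{22},a_{33})$, so the bracket is strictly negative; since $z(t)\in D$ implies $\|y\|>0$, we conclude $\frac{d}{dt}V(z(t))<0$, which is exactly what Definition \ref{def:LapFunc} requires. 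The only real subtlety — the ``main obstacle'' — is tracking the numerical constant so that the threshold comes out as the stated $6c_b$ rather than something larger; this forces the use of $|x|\le\|y\|$ on $D$ at two separate places (once to bound $\|z\|$ by $\|y\|$ and once to bound the $|x|(B(t)z)_1$ term), and one must be slightly careful whether $\|\cdot\|$ denotes the Euclidean or the max norm in (\ref{eq:lin-contraction-bounds}) so the constants are consistent. Everything else is a routine quadratic-form computation.
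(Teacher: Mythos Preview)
Your approach is exactly the paper's: compute $\frac{d}{dt}V$ via (\ref{eq:derAlongSol}), separate the $A$-contribution $-2a_{22}y_1^2-2a_{33}y_2^2\le -2\min(a_{22},a_{33})\|y\|^2$ from the $B$-contribution, and bound the latter on $D$ using $|x|\le\|y\|$ and (\ref{eq:lin-contraction-bounds}). The argument is correct in outline.

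However, your bookkeeping on the perturbation term is off by a factor of two, and this matters because the constant in (\ref{eq:t-cond1}) is tight. Bounding each $(B(t)z)_i$ by $c_b e^{-\lambda|t|}(|x|+|y_1|+|y_2|)$ and then summing gives
\[
\bigl|(B(t)z\,|\,\nabla V(z))\bigr|\;\le\;2c_b e^{-\lambda|t|}\,(|x|+|y_1|+|y_2|)^2\;\le\;2(3+2\sqrt{2})\,c_b e^{-\lambda|t|}\,\|y\|^2,
\]
which is about $11.66$, not $6$. The paper (working with $\tfrac12\nabla V$) expands the bilinear form term by term and uses $|ab|\le\tfrac12(a^2+b^2)$ on the six cross terms $xy_1,xy_2,y_1y_2$; this yields
\[
\bigl((A+B(t))p\,\big|\,\tfrac12\nabla V(p)\bigr)\;\le\;\bigl[6c_b e^{-\lambda|t|}-\min(a_{22},a_{33})\bigr]\|y\|^2,
\]
i.e.\ $\frac{d}{dt}V\le\bigl[12c_b e^{-\lambda|t|}-2\min(a_{22},a_{33})\bigr]\|y\|^2$. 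So the correct coefficient on your full-gradient perturbation bound is $12$, not $6$, and condition (\ref{eq:t-cond1}) is \emph{exactly} the inequality $12c_b e^{-\lambda|t|}<2\min(a_{22},a_{33})$ --- there is no slack of the kind you invoke in the last step. Once you replace your $6$ by $12$ (or, equivalently, carry out the term-by-term expansion as the paper does), the proof is complete and identical to the paper's.
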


\begin{proof}
For $p=\left( x,y_{1},y_{2}\right) $ we compute (skipping the dependence of $%
b_{ij}(t)$ on $t$ to simplify the notation): 
\begin{eqnarray*}
&&\left( \left( A+B\left( t\right) \right) p|\frac{1}{2}\nabla V\left(
p\right) \right) \\
&=&-x^{2}b_{11}+y_{1}^{2}\left( b_{22}-a_{22}\right) +y_{2}^{2}\left(
b_{33}-a_{33}\right) \\
&&+xy_{1}\left( b_{21}-b_{12}\right) +xy_{2}\left( b_{31}-b_{13}\right)
+y_{1}y_{2}\left( b_{23}+b_{32}\right) \\
&\leq &\left\Vert y\right\Vert ^{2}\left\vert b_{11}\right\vert +\left(
y_{1}^{2}+y_{2}^{2}\right) \left( \max \left( \left\vert b_{22}\right\vert
,\left\vert b_{33}\right\vert \right) +\max \left( -a_{22},-a_{33}\right)
\right) \\
&&+\frac{1}{2}\left( x^{2}+y_{1}^{2}\right) \left( \left\vert
b_{21}\right\vert +\left\vert b_{12}\right\vert \right) +\frac{1}{2}\left(
x^{2}+y_{2}^{2}\right) \left( \left\vert b_{31}\right\vert +\left\vert
b_{13}\right\vert \right) \\
&&\qquad +\frac{1}{2}\left( y_{1}^{2}+y_{2}^{2}\right) \left( \left\vert
b_{23}\right\vert +\left\vert b_{32}\right\vert \right) \\
&\leq &\left[ 2c_{b}e^{-\lambda \left\vert t\right\vert }+\max \left(
-a_{22},-a_{33}\right) \right] \left\Vert y\right\Vert ^{2} \\
&&+c_{b}e^{-\lambda \left\vert t\right\vert }\left( \left(
x^{2}+y_{1}^{2}\right) +\left( x^{2}+y_{2}^{2}\right) +\left(
y_{1}^{2}+y_{2}^{2}\right) \right) \\
&\leq &\left[ 2c_{b}e^{-\lambda \left\vert t\right\vert }+\max \left(
-a_{22},-a_{33}\right) \right] \left\Vert y\right\Vert ^{2}+c_{b}e^{-\lambda
\left\vert t\right\vert }\left( 4\left\Vert y\right\Vert ^{2}\right) \\
&=&\left[ 6c_{b}e^{-\lambda \left\vert t\right\vert }-\min \left(
a_{22},a_{33}\right) \right] \left\Vert y\right\Vert ^{2} \\
&<&0,
\end{eqnarray*}%
which concludes the proof.
\end{proof}

\begin{figure}[tbp]
\begin{center}
\includegraphics[height=5cm]{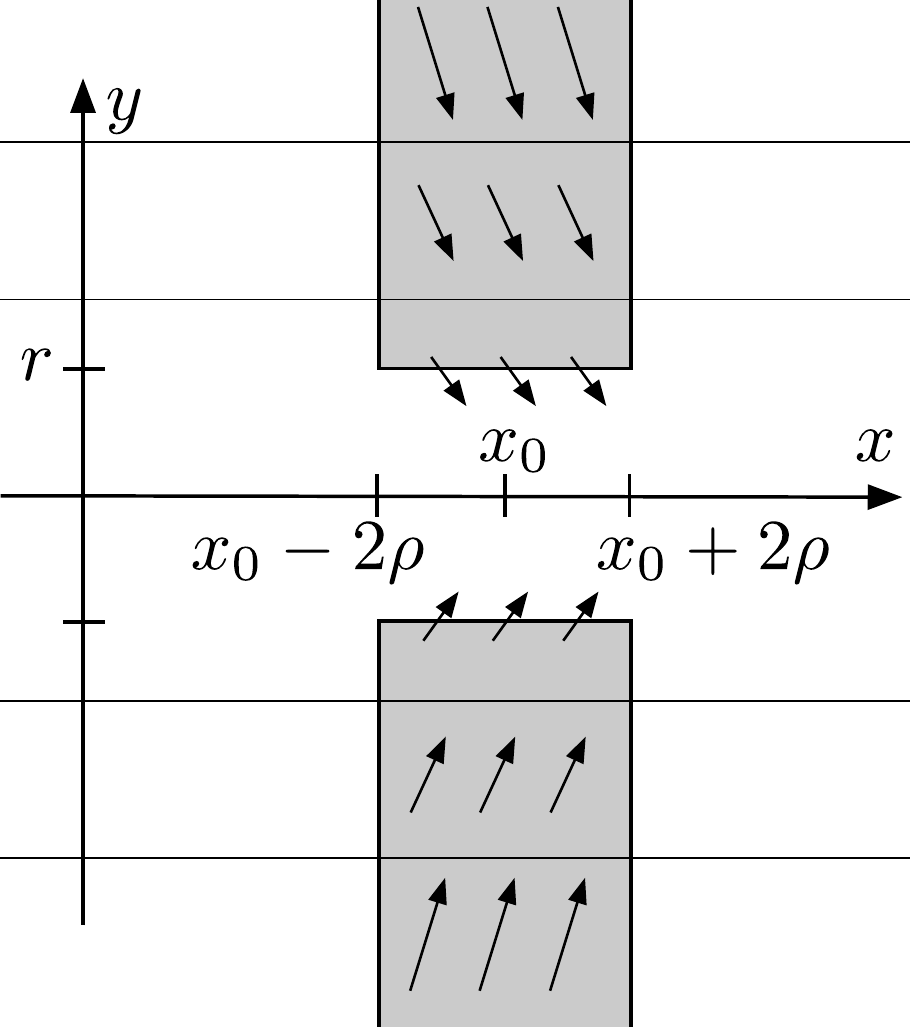}
\end{center}
\caption{The set $D$ from Lemma \protect\ref{lem:Lap-f} is represented by
the two shaded rectangular strips. The horizontal lines are the level sets
of $V$. The arrows indicate the vector field of (\protect\ref{eq:ode}).}
\label{fig:Lyap2}
\end{figure}

\begin{lemma}
\label{lem:Lap-f}Let $x_{0},\rho ,r>0$ and let%
\begin{equation*}
D=\left[ x_{0}-2\rho ,x_{0}+2\rho \right] \times \left\{ \left\Vert
y\right\Vert \geq r\right\} \subset \mathbb{R\times R}^{2}.
\end{equation*}%
Let $J$ be a set of $t\in \mathbb{R}$, such that 
\begin{equation}
2c_{b}e^{-\lambda \left\vert t\right\vert }\left( 1+\frac{x_{0}+2\rho }{r}%
\right) <\min \left( a_{22},a_{33}\right) ,  \label{eq:t-cond}
\end{equation}%
then \correction{24}{24}{
\begin{eqnarray*}
V& :\mathbb{R\times R}^{2}\rightarrow \mathbb{R}, \\
V\left( x,y\right) & =\left\Vert y\right\Vert ^{2}
\end{eqnarray*}}
is a Lyapunov function for (\ref{eq:ode}) on the set $J\times D$.
\end{lemma}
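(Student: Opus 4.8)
The plan is to verify Definition \ref{def:LapFunc} directly: take an arbitrary solution $z(t)=(x(t),y_1(t),y_2(t))$ of (\ref{eq:ode}), differentiate $V(x,y)=\|y\|^2$ along it, and show $\frac{d}{dt}V(z(t))<0$ whenever $z(t)\in D$ and $t\in J$. Since $\nabla V(x,y)=(0,2y_1,2y_2)$, formula (\ref{eq:derAlongSol}) shows that only the $y_1$- and $y_2$-components of $(A+B(t))z$ contribute, so writing $p=(x,y_1,y_2)$ and suppressing the $t$-dependence of $b_{ij}$,
\begin{equation*}
\frac{1}{2}\frac{d}{dt}V(p)= -a_{22}y_1^2-a_{33}y_2^2+b_{22}y_1^2+b_{33}y_2^2+(b_{23}+b_{32})y_1y_2+x(b_{21}y_1+b_{31}y_2).
\end{equation*}

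Next I would estimate the right-hand side term by term, in the same spirit as the proof of Lemma \ref{lem:Lap-f2}. The unperturbed part satisfies $-a_{22}y_1^2-a_{33}y_2^2\le -\min(a_{22},a_{33})\|y\|^2$. For the perturbation, (\ref{eq:lin-contraction-bounds}) gives $b_{22}y_1^2+b_{33}y_2^2\le c_b e^{-\lambda|t|}\|y\|^2$, and $(b_{23}+b_{32})y_1y_2\le c_b e^{-\lambda|t|}\|y\|^2$ after using $2|y_1y_2|\le\|y\|^2$. The one place where the geometry of $D$ enters is the mixed term $x(b_{21}y_1+b_{31}y_2)$: on $D$ we have $|x|\le x_0+2\rho$ while $\|y\|\ge r$, hence $|x|\le\frac{x_0+2\rho}{r}\|y\|$, which together with $|y_1|,|y_2|\le\|y\|$ yields $|x(b_{21}y_1+b_{31}y_2)|\le 2c_b e^{-\lambda|t|}\frac{x_0+2\rho}{r}\|y\|^2$.

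Adding these bounds gives
\begin{equation*}
\frac{1}{2}\frac{d}{dt}V(p)\le\left(2c_b e^{-\lambda|t|}\left(1+\frac{x_0+2\rho}{r}\right)-\min(a_{22},a_{33})\right)\|y\|^2,
\end{equation*}
and for $t\in J$ condition (\ref{eq:t-cond}) makes the coefficient in front of $\|y\|^2$ negative; since $\|y\|\ge r>0$ on $D$, the derivative is strictly negative, which is exactly what Definition \ref{def:LapFunc} requires. I do not anticipate a genuine obstacle: the computation is a direct analogue of Lemma \ref{lem:Lap-f2}, and the only new point is converting the crude bound $|x|\le x_0+2\rho$ into a multiple of $\|y\|$ via the defining constraint $\|y\|\ge r$ of the strip $D$ — this is precisely what keeps the cross-terms controllable and determines the exact form of (\ref{eq:t-cond}).
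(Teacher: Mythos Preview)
Your proof is correct and essentially identical to the paper's: both compute $\tfrac{1}{2}\frac{d}{dt}V(p)$ via (\ref{eq:derAlongSol}), bound the diagonal and $y_1y_2$ perturbation terms by $2c_b e^{-\lambda|t|}\|y\|^2$, and handle the cross-terms $b_{21}xy_1+b_{31}xy_2$ using $|x|\le x_0+2\rho$ together with $\|y\|\ge r$ to arrive at the same final inequality. The only cosmetic difference is that the paper writes $R=\|y\|$ and groups the bounds slightly differently.
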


\begin{proof}
Let $p=\left( x,y_{1},y_{2}\right) $ and let $R^{2}=y_{1}^{2}+y_{2}^{2}\geq
r $. Note that $\pm y_{1}y_{2}\leq \frac{1}{2}R^{2}$ and $\left\vert
y_{1}\right\vert ,\left\vert y_{2}\right\vert \leq R$. We can compute
(skipping the dependence of $b_{ij}(t)$ on $t$ to simplify the notation) 
\begin{eqnarray*}
& \left( \left( A+B\left( t\right) \right) p|\frac{1}{2}\nabla V\left(
p\right) \right) \\
& =\left( -a_{22}+b_{22}\right) y_{1}^{2}+\left( -a_{33}+b_{33}\right)
y_{2}^{2} \\
& \quad +\left( b_{23}+b_{32}\right) y_{1}y_{2}+b_{21}xy_{1}+b_{31}xy_{2} \\
& \leq \max \left( -a_{22}+b_{22},-a_{33}+b_{33}\right) R^{2} \\
& \quad +\frac{1}{2}\left( \left\vert b_{23}\right\vert +\left\vert
b_{32}\right\vert \right) R^{2}+\left\vert b_{21}\right\vert \left\vert
x\right\vert R+\left\vert b_{31}\right\vert \left\vert x\right\vert R \\
& \leq \left( \max \left( -a_{22},-a_{33}\right) +2c_{b}e^{-\lambda
\left\vert t\right\vert }\right) R^{2}+2c_{b}e^{-\lambda \left\vert
t\right\vert }\left( x_{0}+2\rho \right) R \\
& \leq \left[ \max \left( -a_{22},-a_{33}\right) +2c_{b}e^{-\lambda
\left\vert t\right\vert }\left( 1+\frac{x_{0}+2\rho }{r}\right) \right] R^{2}
\\
& <0,
\end{eqnarray*}%
as required.
\end{proof}

Let $\gamma\left( t\right) $ be the solution of (\ref{eq:ode}) with initial
condition $\gamma\left( t_{0}\right) =p$. We shall define%
\begin{equation*}
\phi_{t_{0},t}\left( p\right) :=\gamma\left( t_{0}+t\right) .
\end{equation*}
Note that%
\begin{equation}
\frac{d}{dt}\phi_{t_{0},t}\left( p\right) =\left( A+B\left( t_{0}+t\right)
\right) \phi_{t_{0},t}\left( p\right) .  \label{eq:d-phi}
\end{equation}

\begin{definition}
\label{def:Uxrho} Let $r>0$, $a\geq 0$ and $x \in \mathbb{R}$ we define sets 
$U_{x,a},U_{x,a}^{+},U_{x,a}^{-}\subset \mathbb{R}\times \mathbb{R}^{2}$ by
(see Figure \ref{fig:SetsU})%
\begin{eqnarray*}
U_{x,a}& =\left[ x-a,x+a\right] \times \overline{B}(0,r), \\
U_{x,a}^{+}& =\left[ x-a,x+a\right] \times \partial \overline{B}(0,r), \\
U_{x,a}^{-}& =\partial \left[ x-a ,x+a \right] \times \overline{B}(0,r).
\end{eqnarray*}
\end{definition}

\begin{figure}[tbp]
\begin{center}
\includegraphics[height=5cm]{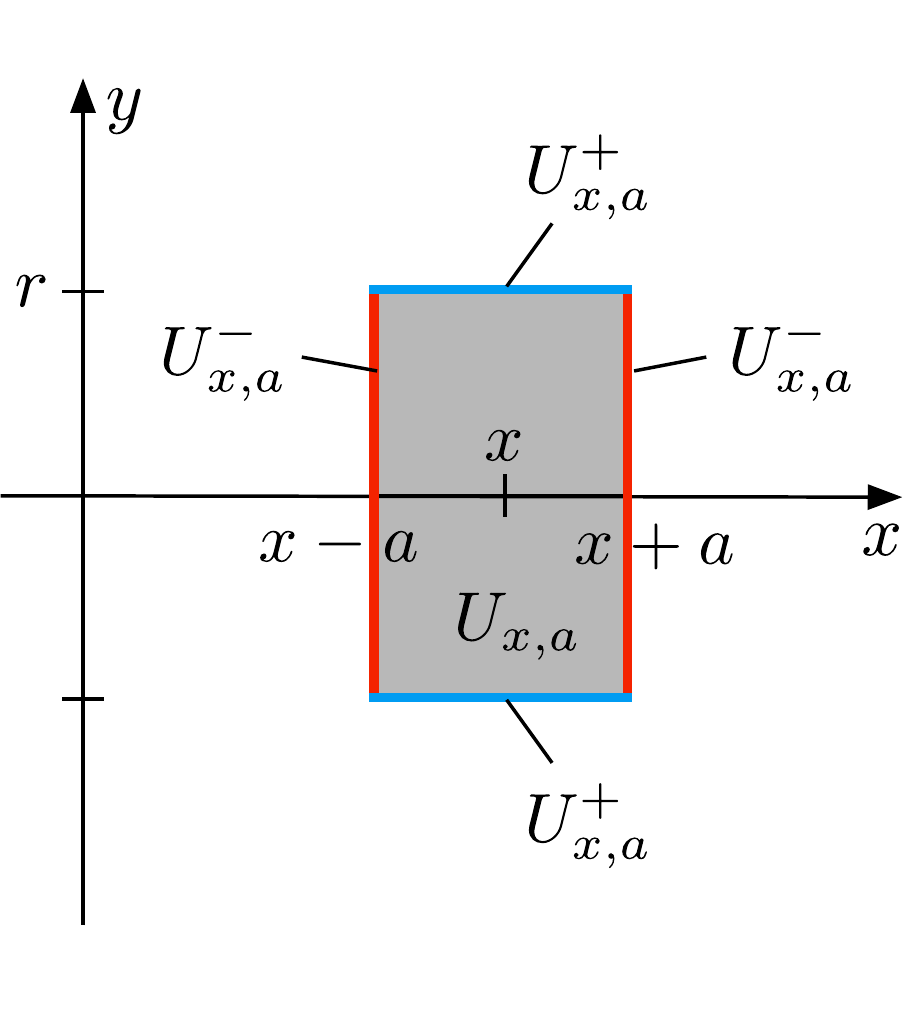}
\end{center}
\caption{The sets $U_{x,a}$, $U_{x,a}^{+}$ and $U_{x,a}^{-}$.}
\label{fig:SetsU}
\end{figure}

\subsection{Behavior for $t \to -\infty$}

\label{subsec:sep-val-minus-infty}

Below lemma allows us to obtain a bound on $x_{-}^{\ast }$ from (\ref%
{eq:x-minus-def}). The idea is to choose some $x^{\ast }\in \mathbb{R}$ and
an initial condition in $U_{x^{\ast },0}$ at an initial time $t^{\ast }<0$.
If $t^{\ast }$ is small enough, then the influence of the matrix $B\left(
t\right) $ on (\ref{eq:ode}) will be small.

\begin{lemma}
\label{lem:backward-sep-val}Let us fix $x^{\ast }>0$, $\rho >0$, $r>0$ and $%
t^{\ast }<0$, such that conditions (\ref{eq:t-cond1}), (\ref{eq:t-cond})
hold true for $x_{0}=x^{\ast }$. If 
\begin{equation}
\frac{1}{\lambda }c_{b}e^{-\lambda \left\vert t^{\ast }\right\vert }\left(
x^{\ast }+2\rho +2r\right) <\rho ,  \label{eq:rho-t-r-link}
\end{equation}%
then there exists a unique point $p^{\ast }\in U_{x^{\ast },0}$ such that $%
\phi _{t^{\ast },t}\left( p^{\ast }\right) $ is convergent as $t\rightarrow
-\infty $. Moreover, 
\begin{equation*}
\lim_{t\rightarrow -\infty }\phi _{t^{\ast },t}\left( p^{\ast }\right)
=\left( x_{-}^{\ast },0,0\right) \in U_{x^{\ast },\rho }.
\end{equation*}
\end{lemma}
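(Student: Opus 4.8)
The plan is to set up the analysis near $t=-\infty$ as a controlled perturbation of the constant-coefficient system $\gamma' = A\gamma$, whose line of equilibria $\{(x,0,0)\}$ is normally hyperbolic with contracting directions $y_1,y_2$. First I would run time backwards, i.e. substitute $\tau = -t$, so that for $\tau$ large the $y$-directions become \emph{expanding} with rates $a_{22},a_{33}$, while the $x$-direction is neutral and only weakly driven by the $O(c_b e^{-\lambda\tau})$ terms of $B$. I would then show that the set of initial conditions $p$ at time $t^\ast$ whose backward orbit stays bounded (equivalently, converges) is exactly a single point of $U_{x^\ast,0}$. The two Lyapunov functions from Lemmas~\ref{lem:Lap-f2} and \ref{lem:Lap-f} do the trapping: the cone $D=\{\|y\|^2\geq x^2\}$ with $V=\|y\|^2-x^2$ shows that once an orbit (in backward time) leaves the cone through $U_{x,a}^+$ it can never come back, because $V$ is increasing along \emph{forward} time there, hence any backward-bounded orbit must stay in the complementary region $\{\|y\| \le |x|\}$ near the line; and the slab version with $V=\|y\|^2$ controls the $\|y\|$-component directly as long as $x$ stays in $[x_0-2\rho,x_0+2\rho]$.

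The core estimate is the drift in the $x$-coordinate. Along any solution, $\dot x = b_{11}x + b_{12}y_1 + b_{13}y_2$, so as long as the orbit stays in a region where $|x|\le x^\ast+2\rho$ and $\|y\|\le r$ (hence $|y_1|,|y_2|\le r$), one gets
\[
\left|\frac{dx}{dt}\right| \le c_b e^{-\lambda|t|}\bigl(|x| + |y_1| + |y_2|\bigr) \le c_b e^{-\lambda|t|}\bigl(x^\ast + 2\rho + 2r\bigr).
\]
Integrating from $t^\ast$ to $-\infty$ and using $\int_{-\infty}^{t^\ast} e^{-\lambda|t|}\,dt = \lambda^{-1} e^{-\lambda|t^\ast|}$ gives a total displacement bounded by $\lambda^{-1} c_b e^{-\lambda|t^\ast|}(x^\ast+2\rho+2r)$, which is $<\rho$ precisely by hypothesis~(\ref{eq:rho-t-r-link}). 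Thus any orbit that starts in $U_{x^\ast,0}$ at time $t^\ast$ and stays in the relevant region has its $x$-coordinate trapped in $[x^\ast-\rho,x^\ast+\rho]$ for all $t\le t^\ast$, and its limit $x_-^\ast$ (if it exists) lies in $[x^\ast-\rho,x^\ast+\rho]$, i.e. the limit point is in $U_{x^\ast,\rho}$ as claimed. Convergence of $y$ to $0$ in backward time is forced by the slab Lyapunov function combined with the uniform expansion rate $\min(a_{22},a_{33})$ dominating the perturbation, which is exactly condition~(\ref{eq:t-cond}): $V=\|y\|^2$ decreases in forward time on the slab, so $\|y(t)\|$ is monotone in backward time while the orbit remains in the slab; together with the $x$-trapping this confines the whole backward orbit to $U_{x^\ast,\rho}$, and a standard argument (the flow is a contraction on this set in backward time transverse to the line) yields $\|y(t)\|\to 0$.

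For existence and uniqueness of the special point $p^\ast$, I would use a shooting/graph-transform argument: for each value of the $x$-coordinate at time $t^\ast$ near $x^\ast$, the set of backward-bounded orbits in the slab, if nonempty, is governed by the stable (in backward time: unstable) behaviour of the normally hyperbolic line; uniqueness in the $y$-fibre follows because two distinct backward-bounded orbits in the same $x$-slab would have their $\|y\|$-difference expanding in backward time by the domination condition, contradicting boundedness, while existence follows from a compactness/Ważewski-type argument on the exit set $U_{x^\ast,0}^+ \cup U_{x^\ast,0}^-$ applied to the box $U_{x^\ast,\rho}$ with its two Lyapunov blocks. The main obstacle I anticipate is handling the interplay between the neutral $x$-direction and the non-autonomous perturbation cleanly: one must show the $x$-drift estimate and the $\|y\|$-contraction estimate are compatible over the same time interval and the same region (that is why the hypotheses bundle $x_0=x^\ast$ into both (\ref{eq:t-cond1}) and (\ref{eq:t-cond}), and why $r$ appears in (\ref{eq:t-cond}) through the ratio $(x_0+2\rho)/r$), and to close the argument one needs the orbit to remain in the domain of validity of \emph{both} Lyapunov lemmas simultaneously for all $t\le t^\ast$ — this is precisely what (\ref{eq:rho-t-r-link}) guarantees by keeping $x$ inside $[x^\ast-\rho,x^\ast+\rho]\subset[x^\ast-2\rho,x^\ast+2\rho]$.
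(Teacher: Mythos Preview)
Your $x$-drift estimate and its use of hypothesis (\ref{eq:rho-t-r-link}) are exactly what the paper does. However, two pieces of your argument are genuinely off.

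First, the role of the cone Lyapunov function (Lemma~\ref{lem:Lap-f2}) is misidentified. It is \emph{not} used to trap individual orbits near the line: an individual backward-bounded orbit sits near $(x^\ast,0,0)$ with $|x|\approx x^\ast \gg \|y\|$, so it never enters the cone $\{\|y\|^2\ge x^2\}$ in the first place, and the lemma says nothing about it. In the paper the cone is used solely for \emph{uniqueness}: given two candidates $p^\ast,p^{\ast\ast}\in U_{x^\ast,0}$, their difference $\xi=p^{\ast\ast}-p^\ast$ is again a solution (linearity is essential here) with $\pi_x\xi=0$, hence $\xi\in\{\|y\|^2>x^2\}$, and then Lemma~\ref{lem:Lap-f2} forces $V(\xi)$ to grow without bound in backward time, contradicting backward-boundedness of both orbits.

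Second, your sentence ``the flow is a contraction on this set in backward time transverse to the line'' is the wrong way round (as you yourself note earlier, the $y$-directions are \emph{expanding} in backward time), so $\|y(t)\|\to 0$ does not follow from a contraction argument. The paper handles this by contradiction: if $\|\pi_y\phi_{t^\ast,s_i}(p^\ast)\|>\delta$ along $s_i\to-\infty$, choose $|t|$ so large that (\ref{eq:t-cond}) holds with $r=\delta/2$; Lemma~\ref{lem:Lap-f} then makes $\|y\|$ strictly increase in backward time from level $\delta$, forcing exit through $U^+$ and contradicting the already-established backward confinement. For existence of $p^\ast$ itself the paper does not run backward at all: it flows $U_{x^\ast,\rho}$ \emph{forward} from an arbitrary $t_0<t^\ast$ to $t^\ast$, uses the slab Lyapunov function plus your $x$-drift bound to keep the image inside $U_{x^\ast,2\rho}$ while the two $U^-$ faces stay on opposite sides of $\{x=x^\ast\}$, so the image must meet $U_{x^\ast,0}$; a compactness limit as $t_0\to-\infty$ produces $p^\ast$. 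Your Wa\.zewski sketch points in this direction but would need this forward-time covering picture to close.
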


\begin{proof}
Since (\ref{eq:t-cond}) holds for $t^{\ast }$, by Lemma \ref{lem:Lap-f} we
see that $U_{x^{\ast },2\rho }^{+}$ is an \textquotedblleft entry set" for
the set $U_{x^{\ast },2\rho }$ (see Figures \ref{fig:Lyap2} ,\ref{fig:SetsU}%
). What we mean by this statement is that for $p\in U_{x^{\ast },2\rho }$
and any $t_{0}<t^{\ast }$ the trajectory $\phi _{t_{0},s}(p)$ can not leave
the set $U_{x^{\ast },2\rho }$ by passing through $U_{x^{\ast },2\rho }^{+}$
going forwards in time for $s\in \left[ 0,t^{\ast }-t_{0}\right] $. The only
way it can exit is by passing through $U_{x^{\ast },2\rho }^{-}$.

We will now show that for $p\in U_{x^{\ast },\rho }$ and any $t_{0}<t^{\ast
} $, the trajectory $\phi _{t_{0},s}(p)$ will never leave $U_{x^{\ast
},2\rho } $ for $s\in \left[ 0,t^{\ast }-t_{0}\right] $. Since we can not
exit through $U_{x^{\ast },2\rho }^{+}$, we need to take care so that for $%
s\in \left[ 0,t^{\ast }-t_{0}\right] $ the $\phi _{t_{0},s}(p)$ does not
reach $U_{x^{\ast },2\rho }^{-}$. Let us use the notation%
\begin{equation*}
\phi _{t_{0},s}\left( p\right) =\left( x\left( s\right) ,y_{1}\left(
s\right) ,y_{2}\left( s\right) \right) ,
\end{equation*}%
As long as $\left\vert \pi _{x}\phi _{t_{0},t}\left( p\right) -x^{\ast
}\right\vert \leq 2\rho $ for $t\in \left[ 0,s\right] $, we have the
following bound (using (\ref{eq:rho-t-r-link}) in the last inequality) 
\begin{eqnarray}
& \left\vert \pi _{x}\phi _{t_{0},s}\left( p\right) -x^{\ast }\right\vert 
\nonumber \\
& \leq \left\vert \pi _{x}p-x^{\ast }\right\vert +\left\vert
\int_{0}^{s}b_{11}\left( t_{0}+t\right) x\left( t\right) +b_{12}\left(
t_{0}+t\right) y_{1}\left( t\right) +b_{13}\left( t_{0}+t\right) y_{2}\left(
t\right) dt\right\vert  \nonumber \\
& \leq \rho +\int_{-\infty }^{t^{\ast }-t_{0}}c_{b}e^{\lambda \left(
t_{0}+t\right) }\left( x^{\ast }+2\rho +2r\right) dt  \nonumber \\
& =\rho +\frac{1}{\lambda }c_{b}e^{\lambda t^{\ast }}\left( x^{\ast }+2\rho
+2r\right)  \nonumber \\
& <\hat{c}2\rho ,  \label{eq:cdelta-bound}
\end{eqnarray}%
for a fixed $\hat{c}\in \left( \frac{1}{2},1\right) $, which does not depend
on $s$. 
\begin{equation*}
\forall t_{0}\leq t^{\ast },\ \forall s\in \lbrack 0,t^{\ast }-t_{0}],\qquad
\quad \Vert \pi _{x}\phi _{t_{0},s}(p)-x^{\ast }\Vert \leq 2\rho .
\end{equation*}

This shows that $\phi _{t_{0},s}$ will not leave $U_{x^{\ast },2\rho }.$
Since trajectories from $U_{x^{\ast },\rho }$ do not leave $U_{x^{\ast
},2\rho }$ and can not touch $U_{x^{\ast },2\rho }^{+}$, we have established
that 
\begin{equation}
\forall t_{0}<t^{\ast },\ \forall s\in (0,t^{\ast }-t_{0}],\qquad \phi
_{t_{0},s}(U_{x^{\ast },\rho })\subset \mathrm{int}U_{x^{\ast },2\rho }.
\label{eq:covering1}
\end{equation}

Mirror estimates to (\ref{eq:cdelta-bound}) together with (\ref%
{eq:rho-t-r-link}) lead to%
\begin{equation*}
|\pi _{x}\phi _{t_{0},s}(p)-\pi _{x}p|\leq \frac{1}{\lambda }c_{b}e^{\lambda
t^{\ast }}(x^{\ast }+2\rho +2r)<\rho ,
\end{equation*}%
hence we proved that 
\begin{equation}
|\pi _{x}\phi _{t_{0},s}(p)-\pi _{x}p|<\rho ,\quad \forall t_{0}<t^{\ast
}\,\forall p\in U_{x^{\ast },\rho }\,\forall s\in \lbrack 0,t^{\ast }-t_{0}].
\label{eq:shift-on-x}
\end{equation}

In particular we obtain for any $y\in \overline{B(0,r)}$, for all $t_{0}\leq
t^{\ast }$ and $s\in \lbrack 0,t^{\ast }-t_{0}]$ 
\begin{equation}
\pi _{x}\phi _{t_{0},s}(x^{\ast }-\rho ,y)<x^{\ast },\quad \pi _{x}\phi
_{t_{0},s}(x^{\ast }+\rho ,y)>x^{\ast }.  \label{eq:on-diff-sides}
\end{equation}%
\begin{figure}[tbp]
\begin{center}
\includegraphics[height=5cm]{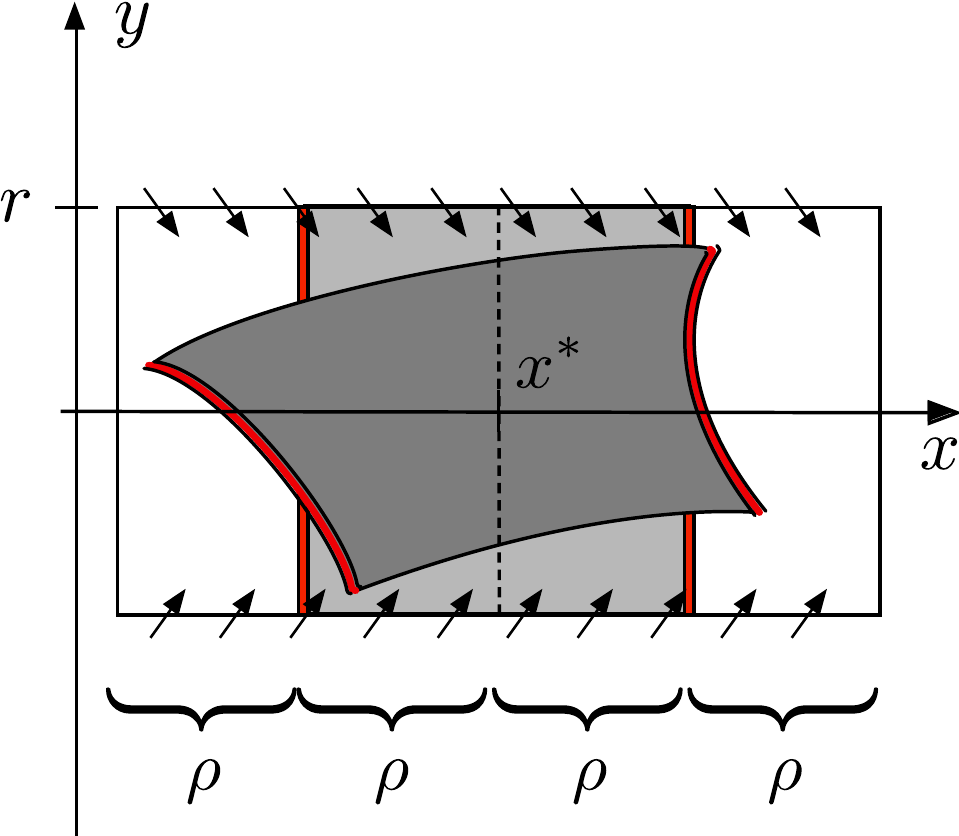}
\end{center}
\caption{The set $U_{x^{\ast },\rho }$ in light grey and 
$\phi
_{t_{0},s}\left( U_{x^{\ast },\rho }\right) $ in dark grey. The set $U_{x^{\ast },0}$ is represented by the dashed vertical line.}
\label{fig:topol}
\end{figure}
From (\ref{eq:covering1}) and (\ref{eq:on-diff-sides}), due to the
topological alignment (see Figure \ref{fig:topol}) of the sets $\phi
_{t_{0},s}\left( U_{x^{\ast },\rho }\right) $ and $U_{x^{\ast },0}$, it
follows that for any $t_{0}<t^{\ast }$ there exist points $q=q\left(
t_{0}\right) \in U_{x^{\ast },\rho }$ and $p=p\left( t_{0}\right) \in
U_{x^{\ast },0}$, such that $\phi _{t_{0},t^{\ast }-t_{0}}\left( q\right) =p$
and $\phi _{t_{0},s}\left( q\right) \in U_{x^{\ast },2\rho }$ for $s\in %
\left[ 0,t^{\ast }-t_{0}\right] $. This implies that 
\begin{eqnarray}
\phi _{t^{\ast },t_{0}-t^{\ast }}\left( p\right) & \in U_{x^{\ast },\rho },
\label{eq:Urho-bound} \\
\phi _{t^{\ast },s}\left( p\right) & \in U_{x^{\ast },2\rho }\qquad \text{%
for }s\in \lbrack t_{0}-t^{\ast },0].  \nonumber
\end{eqnarray}

Let $p_{n}=p\left( -n\right) $ (by taking $t_{0}=-n$).

Since $U_{x^{\ast },0}$ is compact, we can pass to a convergent subsequence \correction{25}{25}{}$
p_{n_{k}}\rightarrow p^{\ast }\in U_{x^{\ast },0}$, and obtain a point for
which 
\begin{equation}
\phi _{t^{\ast },s}\left( p^{\ast }\right) \in U_{x^{\ast },2\rho }\qquad 
\text{for }s\in (-\infty ,0].  \label{eq:U-2rho-holds-trajectory}
\end{equation}%
We now need to show that $\phi _{t^{\ast },s}\left( p^{\ast }\right) $ is
convergent as $s\rightarrow -\infty $.

We first focus on the $x$ coordinate. We will show that for any $\varepsilon 
$, there exists an $S<0$, such that for any $s_{1},s_{2}<S$ holds (the
Cauchy condition for the function $s\mapsto \pi _{x}\phi _{t^{\ast
},s_{1}}\left( p^{\ast }\right) $ for $s\rightarrow -\infty $) 
\begin{equation}
\left\Vert \pi _{x}\phi _{t^{\ast },s_{1}}\left( p^{\ast }\right) -\pi
_{x}\phi _{t^{\ast },s_{2}}\left( p^{\ast }\right) \right\Vert \leq
\varepsilon .  \label{eq:x-convergence-b}
\end{equation}%
Assume that $s_{1}<s_{2}$ and that $u=s_{1}-s_{2}$. Then, by estimates
analogous to (\ref{eq:cdelta-bound}),%
\begin{eqnarray*}
& \left\Vert \pi _{x}\phi _{t^{\ast },s_{1}}\left( p^{\ast }\right) -\pi
_{x}\phi _{t^{\ast },s_{2}}\left( p^{\ast }\right) \right\Vert \\
& =\left\Vert \pi _{x}\phi _{t^{\ast },s_{2}+u}\left( p^{\ast }\right) -\pi
_{x}\phi _{t^{\ast },s_{2}}\left( p^{\ast }\right) \right\Vert \\
& \leq \frac{1}{\lambda }c_{b}e^{s_{2}}\left( x^{\ast }+2\rho +2r\right) .
\end{eqnarray*}%
We see that by choosing negative $s_{2}$, with $\left\vert s_{2}\right\vert $
sufficiently large, we obtain (\ref{eq:x-convergence-b}). This implies the
existence of $\lim_{s\rightarrow -\infty }\pi _{x}\phi _{t^{\ast
},s_{1}}\left( p^{\ast }\right) .$

We will now show that 
\begin{equation}
\lim_{s\rightarrow -\infty }\pi _{y}\phi _{t^{\ast },s}\left( p^{\ast
}\right) =0.  \label{eq:convergence-y}
\end{equation}
Suppose that this is not the case. Thus, there would exist a $\delta $ and a
sequence $s_{i}\rightarrow -\infty $ such that 
\begin{equation*}
\left\vert \pi _{y}\phi _{t^{\ast },s_{i}}\left( p^{\ast }\right)
\right\vert >\delta .
\end{equation*}%
Let us choose $s_{i}$ negative enough so that the $V\left( x,y\right)
=\left\Vert y\right\Vert ^{2}$ would be a Lyapunov function on $J\times D$
with $J=(-\infty, s_i]$ and 
\begin{equation*}
D=\left[ x^{\ast }-2\rho ,x^{\ast }+2\rho \right] \times \left\{ \left\Vert
y\right\Vert \geq \frac{\delta }{2}\right\} .
\end{equation*}
This is possible, since we can apply Lemma \ref{lem:Lap-f} provided that
condition (\ref{eq:t-cond}) is satisfied. The (\ref{eq:t-cond}) will hold
for $r=\frac{\delta }{2}$, provided that $t<T$, for $T$ negative enough.
Since $V$ is a Lyapunov function, by going backward in time, for $s<s_{i}<T$%
, $\phi _{t^{\ast },s}$ will exit $U_{x^{\ast },2\rho }$. This contradicts (%
\ref{eq:U-2rho-holds-trajectory}), hence 
\begin{equation*}
\lim_{s\rightarrow -\infty }\pi _{y}\phi _{t^{\ast },s}\left( p^{\ast
}\right) =0.
\end{equation*}%
We have shown that $\phi _{t^{\ast },s}\left( p^{\ast }\right) $ is
convergent as $s\rightarrow -\infty $, and that the limit lies in $%
U_{x^{\ast },2\rho }$. The fact that the limit is in fact in $U_{x^{\ast
},\rho }$ follows from (\ref{eq:shift-on-x}).

We will now show that the point $p^{\ast }$ is unique. Suppose that we have
a second point $p^{\ast \ast }\in U_{x^{\ast },0}$ for which $%
\lim_{s\rightarrow -\infty }\phi _{t^{\ast },s}\left( p^{\ast \ast }\right) $
exists. Consider the following time dependent change of coordinates $%
(t^{\ast }+t,\xi =(t^{\ast }+t,x,y)-\phi _{t^{\ast },t}\left( p^{\ast
}\right) )$, which means that we just continuously change the location of
the origin, by subtracting some solution. Our equation is linear, hence in
new coordinates the equation is the same, $\xi ^{\prime }=(A+B(t))\xi $.

Let us take an initial condition $\xi \left( t^{\ast }\right) =p^{\ast \ast
}-p^{\ast }$. If $p^{\ast \ast }\neq p^{\ast }$, since by construction $%
p^{\ast \ast },p^{\ast }\in U_{x^{\ast },0},$ we see that $\xi \left(
t^{\ast }\right) \in D=\left\{ \left\Vert y\right\Vert ^{2}>\left\vert
x\right\vert \right\} $. Since we assumed that for $t^{\ast }$ condition (%
\ref{eq:t-cond1}) holds, it also holds for all $t<t^{\ast }$. By Lemma \ref%
{lem:Lap-f2}, this means that for $s<0,$ $\xi \left( t^{\ast }+s\right) $
stays inside $D$. Moreover, since $V(x,y)=\left\Vert y\right\Vert ^{2}-x$ is
a Lyapunov function, $\xi \left( t^{\ast }+s\right) $ must tend to infinity
as $s\rightarrow -\infty $. Since $\xi \left( t^{\ast }+s\right) =\phi
_{t^{\ast },s}\left( p^{\ast \ast }\right) -\phi _{t^{\ast },s}\left(
p^{\ast }\right) $, this means that we can not have both $p^{\ast \ast }\neq
p^{\ast }$ and $\phi _{t^{\ast },s}\left( p^{\ast \ast }\right) $
convergent. We have thus shown that $p^{\ast }$ is the only point in $%
U_{x^{\ast },0}$ for which the limit exists.
\end{proof}

\begin{corollary}
\label{col:cond-to-satisfy} If we consider $t^{\ast }=0$, $x^{\ast }=1$, $%
r=\rho $, and when%
\begin{eqnarray*}
0 &<&\lambda -4c_{b}, \\
0 &<&\min \left( a_{22},a_{33}\right) -6c_{b}, \\
r &>&\max \left( \frac{c_{b}}{\lambda -4c_{b}},\frac{2c_{b}}{\min \left(
a_{22},a_{33}\right) -6c_{b}}\right) ,
\end{eqnarray*}%
then assumptions of Lemma \ref{lem:backward-sep-val} are satisfied.
\end{corollary}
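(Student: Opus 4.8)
The plan is to verify the three hypotheses of Lemma~\ref{lem:backward-sep-val} --- namely that conditions (\ref{eq:t-cond1}) and (\ref{eq:t-cond}) hold at $x_0=x^*$ and that (\ref{eq:rho-t-r-link}) holds --- by simply substituting the prescribed values $t^*=0$, $x^*=1$, $r=\rho$ and checking that each one collapses to one of the three displayed inequalities of the corollary. Note first that the weight $e^{-\lambda|t|}$ occurring in (\ref{eq:t-cond1}), (\ref{eq:t-cond}) and (\ref{eq:rho-t-r-link}) attains its maximum value $1$ at $t=0$, so checking everything at $t^*=0$ is the worst case; the resulting strict inequalities then remain valid for every $t^*\le 0$, which covers the (strictly negative) $t^*$ for which Lemma~\ref{lem:backward-sep-val} is phrased.

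Carrying this out: condition (\ref{eq:t-cond1}) at $|t|=|t^*|=0$ reads $6c_b<\min(a_{22},a_{33})$, i.e.\ the second displayed inequality $0<\min(a_{22},a_{33})-6c_b$. Condition (\ref{eq:t-cond}) at $|t|=0$ with $x_0=x^*=1$ and $r=\rho$ reads $2c_b\bigl(1+\tfrac{1+2\rho}{\rho}\bigr)<\min(a_{22},a_{33})$; since $1+\tfrac{1+2\rho}{\rho}=3+\tfrac1\rho$, this is $6c_b+\tfrac{2c_b}{\rho}<\min(a_{22},a_{33})$, and --- using that $\min(a_{22},a_{33})-6c_b>0$ by the previous step --- it rearranges to $\rho>\tfrac{2c_b}{\min(a_{22},a_{33})-6c_b}$. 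Finally, condition (\ref{eq:rho-t-r-link}) at $t^*=0$ with $x^*=1$, $r=\rho$ reads $\tfrac{c_b}{\lambda}(1+4\rho)<\rho$, equivalently $c_b<\rho(\lambda-4c_b)$; this is solvable for $\rho>0$ precisely when $\lambda-4c_b>0$ (the first displayed inequality), and under that hypothesis it becomes $\rho>\tfrac{c_b}{\lambda-4c_b}$.

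Putting the three reductions together, and recalling $r=\rho$: the bound $r>\max\bigl(\tfrac{c_b}{\lambda-4c_b},\tfrac{2c_b}{\min(a_{22},a_{33})-6c_b}\bigr)$ is exactly the conjunction of the two lower bounds on $\rho$ extracted from (\ref{eq:t-cond}) and (\ref{eq:rho-t-r-link}), while the first two displayed inequalities of the corollary provide the positivity of $\lambda-4c_b$ and of $\min(a_{22},a_{33})-6c_b$ needed to perform those two rearrangements --- and the latter positivity is itself condition (\ref{eq:t-cond1}) at $t^*=0$. Hence all hypotheses of Lemma~\ref{lem:backward-sep-val} are met, which is the claim.

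I do not expect any genuine obstacle here; the argument is essentially bookkeeping. The only two points that need a little care are: (i) the rearrangements of (\ref{eq:t-cond}) and (\ref{eq:rho-t-r-link}) divide through by quantities whose positivity must be checked first, which is exactly the role played by the first two displayed inequalities of the corollary; and (ii) the mild remark that $t^*=0$ maximizes the exponential weights, so the strict inequalities obtained at $t^*=0$ persist for the slightly negative $t^*$ used in Lemma~\ref{lem:backward-sep-val}.
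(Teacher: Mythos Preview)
Your verification is correct and is exactly the routine substitution the paper has in mind; the paper states this corollary without proof, treating the reduction of (\ref{eq:t-cond1}), (\ref{eq:t-cond}) and (\ref{eq:rho-t-r-link}) at $t^{\ast}=0$, $x^{\ast}=1$, $r=\rho$ to the three displayed inequalities as immediate. Your remark (ii) about $t^{\ast}=0$ being the worst case for the exponential weight is a sensible way to reconcile the corollary's choice $t^{\ast}=0$ with the hypothesis $t^{\ast}<0$ in Lemma~\ref{lem:backward-sep-val}, and the paper leaves this point implicit.
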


\subsection{Behavior for $t \to \infty$}

\label{subsec:sep-val-plus-infty}

We now consider a point in $U_{x^{\ast \ast },0}$ with initial time $t^{\ast
\ast }>0$.

\begin{lemma}
\label{lem:forward-sep-val} Let us fix $x^{\ast \ast }>0$, $\rho >0$, $r>0$
and $t^{\ast \ast }>0$, such that conditions (\ref{eq:t-cond1}), (\ref%
{eq:t-cond}) hold true for $x_{0}=x^{\ast \ast }$.

If 
\begin{equation*}
\frac{1}{\lambda }c_{b}e^{-\lambda t^{\ast \ast }}\left( x^{\ast \ast
}+2\rho +2r\right) <\rho ,
\end{equation*}%
then for any $p^{\ast \ast }\in U_{x^{\ast \ast },0}$ the $\phi _{t^{\ast
\ast },t}\left( p^{\ast \ast }\right) $ is convergent as $t\rightarrow
+\infty $. Moreover, 
\begin{equation}
\lim_{t\rightarrow +\infty }\phi _{t^{\ast \ast },t}\left( p^{\ast \ast
}\right) =\left( x_{+}^{\ast \ast },0,0\right) \in U_{x^{\ast \ast },\rho }.
\label{eq:forward-limit}
\end{equation}
\end{lemma}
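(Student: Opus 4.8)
The plan is to mirror the argument of Lemma~\ref{lem:backward-sep-val}, but run the trajectory \emph{forward} in time rather than backward, which is actually simpler because we no longer need a compactness/subsequence argument to extract the distinguished point: here \emph{every} point of $U_{x^{\ast\ast},0}$ converges, so there is nothing to select. First I would fix an arbitrary $p^{\ast\ast}\in U_{x^{\ast\ast},0}$ and write $\phi_{t^{\ast\ast},t}(p^{\ast\ast})=(x(t),y_1(t),y_2(t))$. The key structural fact to establish is that this forward trajectory never leaves $U_{x^{\ast\ast},2\rho}$ for $t\ge 0$. By Lemma~\ref{lem:Lap-f} (applicable since conditions (\ref{eq:t-cond1}), (\ref{eq:t-cond}) hold for $x_0=x^{\ast\ast}$ and, being exponential-in-$|t|$ bounds, persist for all $|t|\ge t^{\ast\ast}$), $V(x,y)=\|y\|^2$ is a Lyapunov function on the relevant strip, so the set $U_{x^{\ast\ast},2\rho}^{+}$ (the lateral $\|y\|=r$ boundary) is an \textbf{exit} set going forward in time — once $\|y\|<r$ it stays $<r$ — so the trajectory cannot escape through the $y$-boundary. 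The only possible escape is through $U_{x^{\ast\ast},2\rho}^{-}$, the $x$-ends; and this is ruled out exactly as in (\ref{eq:cdelta-bound}): as long as $|\pi_x\phi_{t^{\ast\ast},s}(p^{\ast\ast})-x^{\ast\ast}|\le 2\rho$, integrating the $x$-equation $\dot x=b_{11}x+b_{12}y_1+b_{13}y_2$ and using (\ref{eq:lin-contraction-bounds}) gives
\begin{equation*}
|\pi_x\phi_{t^{\ast\ast},s}(p^{\ast\ast})-x^{\ast\ast}|\le 0+\int_{t^{\ast\ast}}^{\infty}c_b e^{-\lambda t}(x^{\ast\ast}+2\rho+2r)\,dt=\tfrac{1}{\lambda}c_b e^{-\lambda t^{\ast\ast}}(x^{\ast\ast}+2\rho+2r)<\rho,
\end{equation*}
which by a standard continuation/bootstrap argument shows the bound $|\pi_x-x^{\ast\ast}|<\rho<2\rho$ holds for all $s\ge 0$, so the trajectory stays in $\operatorname{int}U_{x^{\ast\ast},2\rho}$ (indeed inside $U_{x^{\ast\ast},\rho}$).

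Next I would prove convergence of the two components separately. For the $x$-component, the same integral estimate yields a Cauchy condition as $t\to+\infty$: for $s_2>s_1>0$,
\begin{equation*}
|\pi_x\phi_{t^{\ast\ast},s_2}(p^{\ast\ast})-\pi_x\phi_{t^{\ast\ast},s_1}(p^{\ast\ast})|\le \int_{t^{\ast\ast}+s_1}^{\infty}c_b e^{-\lambda t}(x^{\ast\ast}+2\rho+2r)\,dt=\tfrac{1}{\lambda}c_b e^{-\lambda(t^{\ast\ast}+s_1)}(x^{\ast\ast}+2\rho+2r)\xrightarrow[s_1\to\infty]{}0,
\end{equation*}
so $\lim_{t\to+\infty}\pi_x\phi_{t^{\ast\ast},t}(p^{\ast\ast})=:x_+^{\ast\ast}$ exists, and since the whole trajectory lies in $U_{x^{\ast\ast},\rho}$ we get $x_+^{\ast\ast}\in[x^{\ast\ast}-\rho,x^{\ast\ast}+\rho]$. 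For the $y$-component I would argue $\lim_{t\to+\infty}\pi_y\phi_{t^{\ast\ast},t}(p^{\ast\ast})=0$ by contradiction exactly as in Lemma~\ref{lem:backward-sep-val}: if $\|\pi_y\|>\delta$ along a sequence $t_i\to+\infty$, then for $t$ large enough $V(x,y)=\|y\|^2$ is a Lyapunov function on the strip $[x^{\ast\ast}-2\rho,x^{\ast\ast}+2\rho]\times\{\|y\|\ge\delta/2\}$ (condition (\ref{eq:t-cond}) holds with $r=\delta/2$ for $t$ beyond some threshold), so $V$ strictly decreases along the forward trajectory — but the trajectory keeps returning to $\|y\|>\delta$, a contradiction. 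Hence $\|\pi_y\phi_{t^{\ast\ast},t}(p^{\ast\ast})\|\to 0$, which together with the $x$-limit gives (\ref{eq:forward-limit}) with $(x_+^{\ast\ast},0,0)\in U_{x^{\ast\ast},\rho}$.

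I expect no serious obstacle here: the main point requiring care is the forward-time \emph{direction} of the Lyapunov/isolation argument — in Lemma~\ref{lem:backward-sep-val} $U^{+}$ was an entry set for backward flow, whereas here it is an exit set for forward flow, so the Lyapunov inequality $\frac{d}{dt}V<0$ on $D$ now works \emph{in our favour} (it confines rather than expels), and the compactness argument used to manufacture $p^{\ast}$ in the backward case is simply not needed. The only slightly delicate bookkeeping is verifying that the validity set $J$ of (\ref{eq:t-cond1})–(\ref{eq:t-cond}), which the hypotheses give at time $t^{\ast\ast}$, automatically contains all $t\ge t^{\ast\ast}$ — this is immediate because the left-hand sides of those conditions are decreasing in $|t|$.
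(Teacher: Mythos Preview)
Your proposal is correct and follows essentially the same route as the paper's proof: confinement via Lemma~\ref{lem:Lap-f} plus the integral estimate analogous to (\ref{eq:cdelta-bound}), Cauchy convergence of $\pi_x$, and the contradiction argument with $V(x,y)=\|y\|^2$ for $\pi_y\to 0$. One terminological slip: the paper calls $U_{x^{\ast\ast},2\rho}^{+}$ an \emph{entry} set (trajectories can only enter, not leave, through it in forward time), not an exit set --- but your stated conclusion (``once $\|y\|<r$ it stays $<r$'') is exactly right, so the substance is unaffected.
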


\begin{proof}
The proof follows analogous in steps as the proof of Lemma \ref%
{lem:backward-sep-val}. Since (\ref{eq:t-cond}) holds for $t^{\ast }$, by
Lemma \ref{lem:Lap-f} we see that $U_{x^{\ast \ast },2\rho }^{+}$ is an
\textquotedblleft entry set" for the set $U_{x^{\ast \ast },2\rho }$ (see
Figures \ref{fig:Lyap2} ,\ref{fig:SetsU}). By a similar derivation to (\ref%
{eq:cdelta-bound}) we can establish that trajectories from $U_{x^{\ast },0}$
do not leave $U_{x^{\ast },\rho }$,\correction{26}{26}{
\begin{equation*}
\phi _{t^{\ast \ast },s}(U_{x^{\ast },0})\subset U_{x^{\ast \ast },\rho
}\qquad \text{for all }s>0.
\end{equation*}}
Using the same method (but taking the limit to $+\infty $ instead of $%
-\infty $) as for the proof of (\ref{eq:x-convergence-b}) we can show that
we have convergence of $\pi _{x}\phi _{t^{\ast \ast },t}\left( p^{\ast \ast
}\right) $ as $t$ goes to $+\infty $ for any $p^{\ast \ast }\in U_{x^{\ast
\ast },0}$.

We need to show that 
\begin{equation}
\lim_{s\rightarrow +\infty }\pi _{y}\phi _{t^{\ast \ast },s}\left( p^{\ast
\ast }\right) =0.  \label{eq:convergence-positive-y}
\end{equation}%
Suppose that this is not the case. Then there would exist a $\delta $ and a
sequence $s_{i}\rightarrow +\infty $ such that 
\begin{equation}
\left\vert \pi _{y}\phi _{t^{\ast \ast },s_{i}}\left( p^{\ast \ast }\right)
\right\vert >\delta .  \label{eq:convergence-positive-y-contr}
\end{equation}%
Let us choose $T$ positive enough so that $V\left( x,y\right) =\left\Vert
y\right\Vert ^{2}$ would be a Lyapunov function on $[T,+\infty )\times D$,
for 
\begin{equation*}
D=\left[ x^{\ast \ast }-2\rho ,x^{\ast \ast }+2\rho \right] \times \left\{
\left\Vert y\right\Vert \geq \frac{\delta }{2}\right\} .
\end{equation*}%
This is possible, since we can apply Lemma \ref{lem:Lap-f} provided that
condition (\ref{eq:t-cond}) is satisfied. The (\ref{eq:t-cond}) will hold
for $r=\frac{\delta }{2}$, provided that $T<t$, for $T$ large enough. Since $%
V$ is a Lyapunov function, by going forwards in time $\phi _{t^{\ast \ast
},s}\left( p^{\ast \ast }\right) $ will enter $\left[ x^{\ast \ast }-2\rho
,x^{\ast \ast }+2\rho \right] \times \left\{ \left\Vert y\right\Vert <\frac{%
\delta }{2}\right\} $ and once it enters, it will remain there. This
contradicts (\ref{eq:convergence-positive-y-contr}), hence we have
established (\ref{eq:convergence-positive-y}).
\end{proof}

\begin{corollary}
Consider $t^{\ast \ast }=0$. If%
\begin{eqnarray*}
0 &<&\min \left( a_{22},a_{33}\right) -6c_{b}, \\
0 &<&\lambda -2c_{b},
\end{eqnarray*}%
and%
\begin{eqnarray*}
\rho &<&\frac{1}{2}\left[ r\left( \frac{1}{2c_{b}}\min \left(
a_{22},a_{33}\right) -1\right) -x^{\ast \ast }\right] , \\
\rho &>&\frac{c_{b}\left( x^{\ast \ast }+2r\right) }{\lambda -2c_{b}},
\end{eqnarray*}%
then assumptions of Lemma \ref{lem:forward-sep-val} are satisfied.
\end{corollary}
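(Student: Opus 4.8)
The plan is to substitute $t^{\ast\ast}=0$ into the three hypotheses of Lemma~\ref{lem:forward-sep-val} and then check that each of them becomes, after an elementary rearrangement, one of the four inequalities displayed in the corollary. The only observation that is not pure bookkeeping is that the exponential weight $e^{-\lambda|t|}$ attains its maximum at $t=0$, so that verifying the time-dependent conditions at the single instant $t=0$ already forces them on the whole relevant time axis.

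First I would record that, since $\lambda>0$, one has $e^{-\lambda|t|}\le 1$ for all $t\in\mathbb{R}$, with equality exactly at $t=0$. Hence the left-hand sides of conditions (\ref{eq:t-cond1}) and (\ref{eq:t-cond}) are non-increasing in $|t|$, so that if either inequality holds at $t=0$ it holds for every $t\in\mathbb{R}$; in particular the time sets $J$ in Lemmas~\ref{lem:Lap-f2} and \ref{lem:Lap-f} become all of $\mathbb{R}$, which is more than the proof of Lemma~\ref{lem:forward-sep-val} requires, since there these conditions are only invoked on $[t^{\ast\ast},+\infty)$. Thus it suffices to verify (\ref{eq:t-cond1}) and (\ref{eq:t-cond}) at $t=0$ with $x_0=x^{\ast\ast}$.

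Next I would run the three checks. (i) At $t=0$, (\ref{eq:t-cond1}) reads $6c_b<\min(a_{22},a_{33})$, i.e. the first displayed inequality $0<\min(a_{22},a_{33})-6c_b$. (ii) At $t=0$ with $x_0=x^{\ast\ast}$, (\ref{eq:t-cond}) reads $2c_b\left(1+\frac{x^{\ast\ast}+2\rho}{r}\right)<\min(a_{22},a_{33})$; dividing by $2c_b$, subtracting $1$, multiplying by $r$ and isolating $\rho$ turns this into $\rho<\frac{1}{2}\left[r\left(\frac{1}{2c_b}\min(a_{22},a_{33})-1\right)-x^{\ast\ast}\right]$, the third displayed inequality. (iii) With $t^{\ast\ast}=0$ the last hypothesis of Lemma~\ref{lem:forward-sep-val}, namely $\frac{1}{\lambda}c_b e^{-\lambda t^{\ast\ast}}(x^{\ast\ast}+2\rho+2r)<\rho$, becomes $c_b(x^{\ast\ast}+2\rho+2r)<\lambda\rho$; collecting the $\rho$-terms gives $c_b(x^{\ast\ast}+2r)<(\lambda-2c_b)\rho$, and dividing by $\lambda-2c_b>0$ (the second displayed inequality) yields $\rho>\frac{c_b(x^{\ast\ast}+2r)}{\lambda-2c_b}$, the fourth displayed inequality. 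The remaining data $x^{\ast\ast}>0$, $\rho>0$, $r>0$ are in force by hypothesis (and $c_b>0$ by (\ref{eq:lin-contraction-bounds})), and passing from $t^{\ast\ast}>0$ to $t^{\ast\ast}=0$ is harmless, because the proof of Lemma~\ref{lem:forward-sep-val} uses $t^{\ast\ast}$ only through the bound $e^{-\lambda t^{\ast\ast}}\le 1$ and runs its Lyapunov/trapping arguments on $[0,+\infty)$. Hence all the assumptions of Lemma~\ref{lem:forward-sep-val} hold.

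Since every step above is a one-line algebraic manipulation, I do not expect a genuine obstacle here; the only place that demands (minimal) care is the justification that checking the time-dependent conditions (\ref{eq:t-cond1}) and (\ref{eq:t-cond}) at $t=0$ alone already covers the whole forward time axis, which is exactly where the monotonicity of $e^{-\lambda|t|}$ is used.
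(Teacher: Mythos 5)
Your verification is correct and matches what the paper intends: the corollary is stated there without proof precisely because it amounts to substituting $t^{\ast\ast}=0$ (so $e^{-\lambda|t|}\le e^{0}=1$ handles all $t\ge 0$) into (\ref{eq:t-cond1}), (\ref{eq:t-cond}) and the integral hypothesis of Lemma~\ref{lem:forward-sep-val}, and rearranging exactly as you do. No gaps; your remark that monotonicity of $e^{-\lambda|t|}$ propagates the $t=0$ check to the whole forward time axis is the one non-bookkeeping point, and you handle it correctly.
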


Lemmas \ref{lem:backward-sep-val}, \ref{lem:forward-sep-val} lead to the
following algorithm for establishing bounds for $x_{-}^{\ast }$ and $%
x_{+}^{\ast }$: \medskip

\noindent \textbf{Algorithm 2:}

\begin{enumerate}
\item Fix $r,\rho >0$ and choose $t^{\ast }<0$ small enough to satisfy
assumptions of Lemma \ref{lem:backward-sep-val}. The Lemma \ref%
{lem:backward-sep-val} then ensures that%
\begin{equation*}
x_{-}^{\ast }\in \left[ x^{\ast }-\rho ,x^{\ast }+\rho \right] .
\end{equation*}

\item By rigorous numerical integration, evaluate the bound for $\phi
_{t^{\ast },t}(U_{x^{\ast },0}).$ The $t>0$ needs to be chosen large enough
so that $t^{\ast \ast }=t^{\ast }+t$ satisfies assumptions of Lemma \ref%
{lem:forward-sep-val}. The $t$ also needs to be large enough so that $\phi
_{t^{\ast },t}(U_{x^{\ast },0})\subset U_{x^{\ast \ast },b}$, for some $b>0$.

\item By Lemma \ref{lem:forward-sep-val} $\lim_{t\rightarrow +\infty }\phi
_{t^{\ast },t}\left( p^{\ast }\right) \in U_{x^{\ast \ast },b+\rho }$. This
ensures that%
\begin{equation*}
x_{+}^{\ast }\in \left[ x^{\ast \ast }-b-\rho ,x^{\ast \ast }+b+\rho \right]
.
\end{equation*}

We then have the following bound:%
\begin{equation*}
\frac{x_{+}^{\ast }}{x_{-}^{\ast }}\in \left[ \frac{x^{\ast \ast }-b-\rho }{%
x^{\ast }+\rho },\frac{x^{\ast \ast }+b+\rho }{x^{\ast }-\rho }\right] .
\end{equation*}
\end{enumerate}

\subsection{Application to the Shimizu Morioka system \label%
{sec:CAP-separatrix-value}}

In this section we prove Theorem \ref{th:sep-val}. We will consider a matrix 
$P$ consisting of the eigenvectors of (\ref{eq:S-M-ode2-limit}), of the form%
\begin{equation}
P=\left( 
\begin{array}{lll}
1 & a_{0}+1 & 0 \\ 
-1 & 1 & 0 \\ 
0 & 0 & 1%
\end{array}%
\right) .  \label{eq:P-def}
\end{equation}%
Thus, $P$ is transition from the eigenvectors basis to the standard one. In
coordinates $\gamma $ given by $\eta =P\gamma $, the ODE (\ref{eq:S-M-ode2})
takes form%
\begin{equation}
\gamma ^{\prime }=\left( A+B\left( t\right) \right) \gamma ,
\label{eq:ODE-sep-val-gamma}
\end{equation}%
for 
\begin{eqnarray}
A &=&\left( 
\begin{array}{ccc}
0 & 0 & 0 \\ 
0 & -\left( a_{0}+2\right) & 0 \\ 
0 & 0 & -a_{0}%
\end{array}%
\right) .  \nonumber \\
B\left( t\right) &=&\frac{a_{0}+1}{a_{0}+2}\left( 
\begin{array}{ccc}
-Z_{0}\left( t\right) & Z_{0}\left( t\right) & -2\frac{X_{0}\left( t\right) 
}{a_{0}+1} \\ 
-Z_{0}\left( t\right) & Z_{0}\left( t\right) & -2\frac{X_{0}\left( t\right) 
}{a_{0}+1} \\ 
-\left( a_{0}+2\right) X_{0}\left( t\right) & \left( a_{0}+2\right)
X_{0}\left( t\right) & 0%
\end{array}%
\right) .  \label{eq:B-form}
\end{eqnarray}

Since $a_{0}\approx 1.72$ from the bound on $X_{0}\left( t\right)
,Y_{0}\left( t\right) ,Z_{0}(t)$ from Theorem \ref{th:homoclinic} by (\ref%
{eq:B-form}), we see that each coefficient $b_{ij}\left( t\right) $ of the
matrix $\mathcal{B}\left( t\right) $ is bounded by 
\begin{eqnarray*}
\left\vert b_{ij}\left( t\right) \right\vert &\leq &\left( a_{0}+1\right)
3.5e^{-\xi \left\vert t\right\vert }\left\Vert \left( X_{0}\left( 0\right)
,Y_{0}\left( 0\right) ,Z_{0}(0)\right) \right\Vert \qquad \text{for }t\leq 0,
\\
\left\vert b_{ij}\left( T+t\right) \right\vert &\leq &\left( a_{0}+1\right)
3.5e^{-\xi t}\left\Vert \left( X_{0}\left( T\right) ,Y_{0}\left( T\right)
,Z_{0}(T)\right) \right\Vert \qquad \text{for }t\geq 0.
\end{eqnarray*}%
This finishes establishing the needed ingredients for Algorithm 2.\textbf{\ }
\begin{figure}[tbp]
\begin{center}
\includegraphics[height=4.5cm]{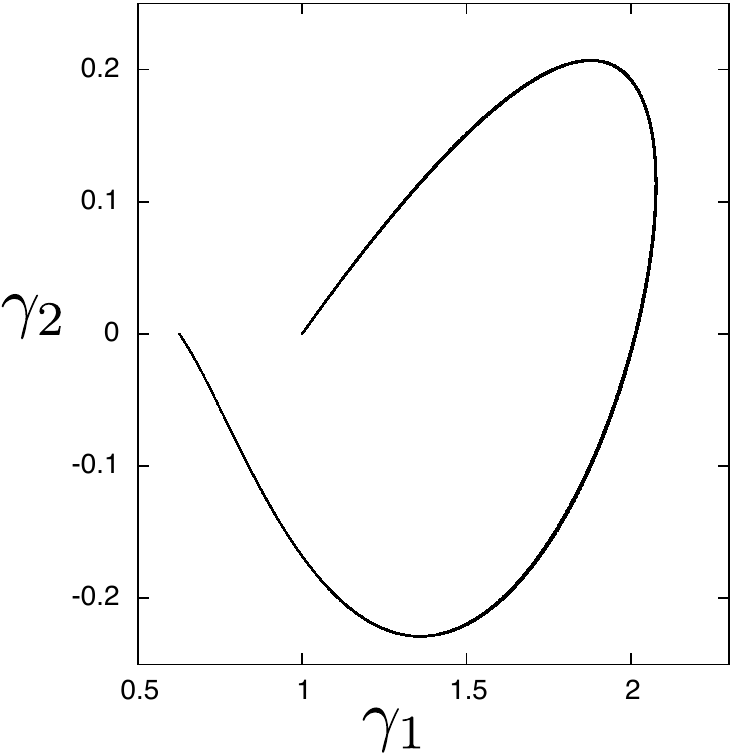}\qquad %
\includegraphics[height=4.5cm]{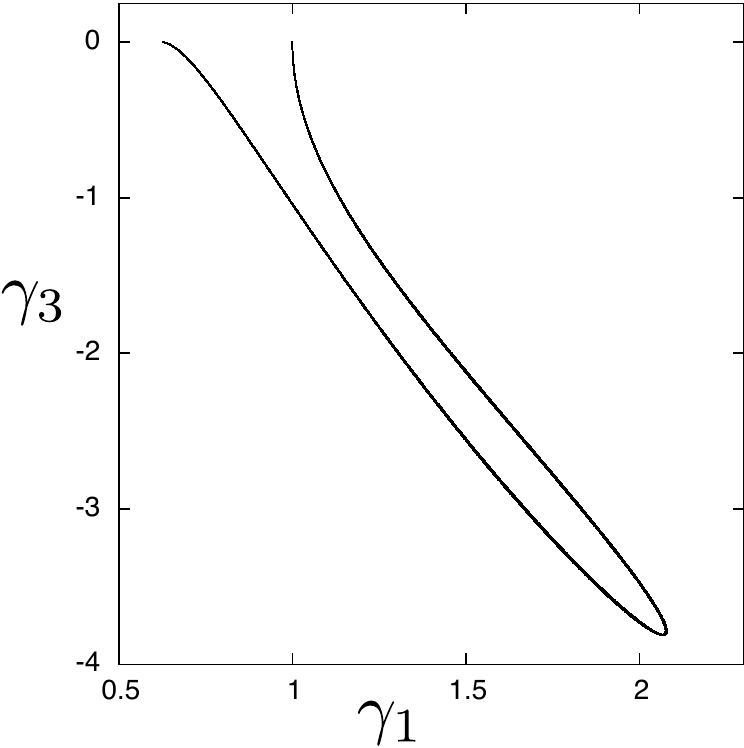}
\end{center}
\caption{The computer assisted bound on the heteroclinic trajectory of (%
\protect\ref{eq:ODE-sep-val-gamma}).}
\label{fig:gamma-trajectory}
\end{figure}

Below is the bound on the set $U_{1,\rho }$ from Lemma \ref%
{lem:backward-sep-val}, 
\begin{equation*}
U_{1,\rho }=\left( 
\begin{array}{c}
\lbrack 0.99984336210766,1.0001566378923] \\ 
\lbrack -0.00015663789234007,0.00015663789234007] \\ 
\lbrack -0.00015663789234007,0.00015663789234007]%
\end{array}%
\right) .
\end{equation*}%
By Lemma \ref{lem:backward-sep-val}, there is a point in $U_{1,0}$, that
converges to $(x_{-}^{\ast },0,0)\in U_{1,\rho }$. We therefore take the set 
$U_{1,0}$ as the initial point from which we integrate (\ref%
{eq:ODE-sep-val-gamma}) forward in time. A bound on all trajectories that
start in $U_{1,0}$ is depicted in Figure \ref{fig:gamma-trajectory}. (We
make also the same plot in coordinates $\eta $ in Figure \ref%
{fig:eta-trajectory}.) A trajectory from such enclosure makes a loop, to
finish at time $T$ closer to the origin, in a cubical enclosure $\Gamma
=\Gamma _{1}\times \Gamma _{2}\times \Gamma _{3}\subset \mathbb{R}^{3}$. We
use $\Gamma $ to compute the bound on the set $U_{\Gamma _{1},\rho }$ from
Lemma \ref{lem:forward-sep-val}, obtaining%
\begin{equation*}
U_{\Gamma _{1},\rho }\subset \left( 
\begin{array}{c}
\lbrack 0.62606812264791,0.62663392848044] \\ 
\lbrack -5.3960913395776e-05,5.3960913395776e-05] \\ 
\lbrack -5.3960913395776e-05,5.3960913395776e-05]%
\end{array}%
\right) .
\end{equation*}%
\begin{figure}[tbp]
\begin{center}
\includegraphics[height=4.5cm]{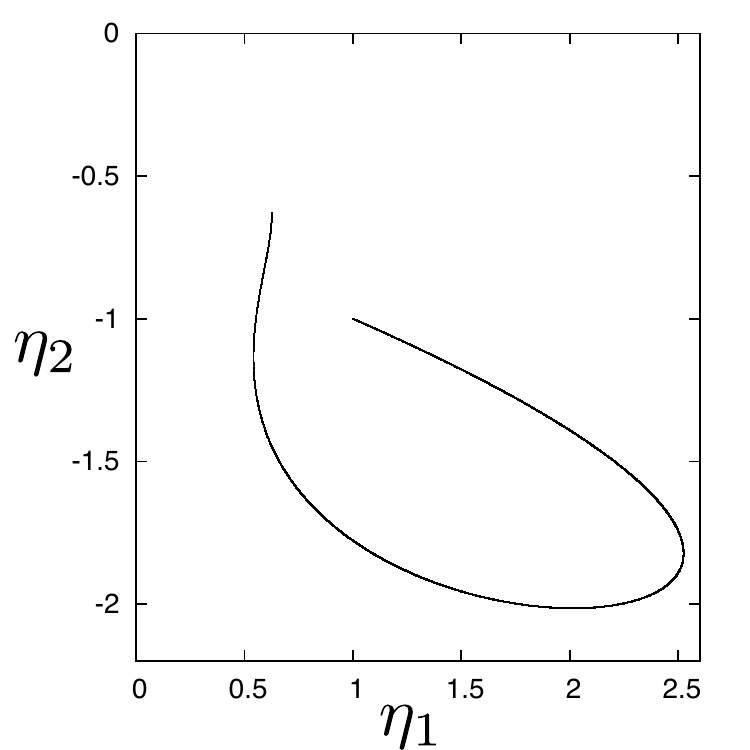}\qquad %
\includegraphics[height=4.5cm]{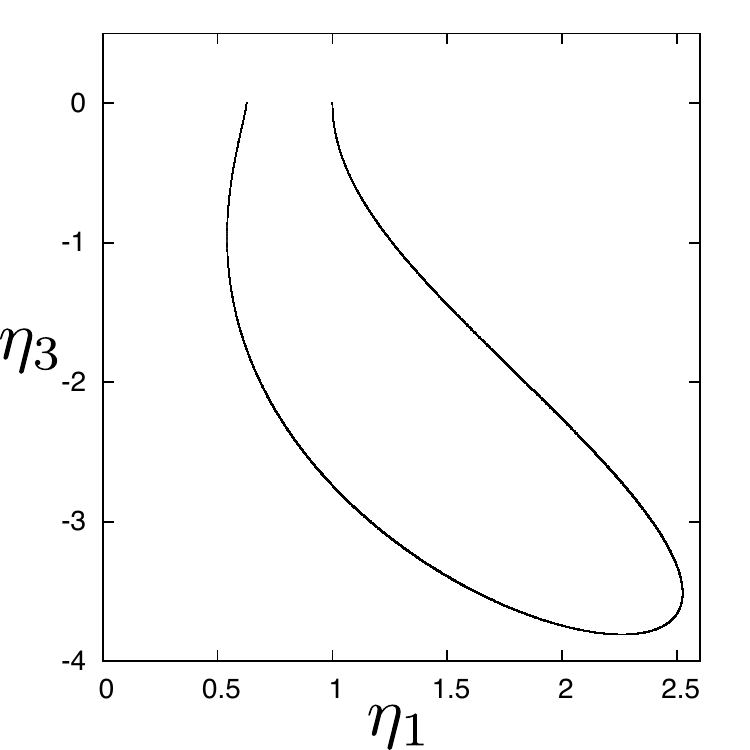}
\end{center}
\caption{The computer assisted bound on the heteroclinic trajectory of (%
\protect\ref{eq:S-M-ode2}).}
\label{fig:eta-trajectory}
\end{figure}
By Lemma \ref{lem:forward-sep-val}, there exists a point $(x_{+}^{\ast
},0,0)\in U_{\Gamma _{1},\rho }$ and a trajectory $\gamma (t)$ of (\ref%
{eq:ODE-sep-val-gamma}), for with 
\begin{eqnarray*}
\lim_{t\rightarrow -\infty }\gamma (t)& =(x_{-}^{\ast },0,0), \\
\lim_{t\rightarrow +\infty }\gamma (t)& =(x_{+}^{\ast },0,0).
\end{eqnarray*}%
From the fact that $\lim_{t\rightarrow \pm \infty }\eta \left( t\right)
=\lim_{t\rightarrow \pm \infty }P\gamma \left( t\right) $, we obtain the
claim of Theorem \ref{th:sep-val}.

The computer assisted proof has been done entirely by using the CAPD%
\footnote{%
computer assisted proofs in dynamics: http://capd.ii.uj.edu.pl/} package and
took under a second on a single core 3Ghz Intel i7 processor.

\section{Acknowledgements} We would like to thank the anonymous Reviewers, as well as the Editors for their comments, suggestions and corrections, which helped us improve our paper.

\section{Appendix}

\subsection{Proof of Theorem \protect\ref{th:Wu-cap-bound}\label%
{sec:Wu-cap-bound-proof}}

Before we give the proof, we introduce some auxiliary tools.

Let 
\begin{equation*}
J_{u}\left( z,L\right) =\left\{ \left( x,y\right) \in \mathbb{R}^{u}\times 
\mathbb{R}^{s}:\left\Vert \pi _{y}z-y\right\Vert \leq L\left\Vert \pi
_{x}z-y\right\Vert \right\} .
\end{equation*}%
The set $J_{u}\left( z,L\right) $ defines a cone of slope $L$, centered at $%
z $. Below theorem establish cone alignment for a map $\phi $, which
satisfies certain bounds on its derivative.

\begin{theorem}
\label{th:cone-alignment-maps}\cite[Theorem 27]{CZnhim} Let $U\subset 
\mathbb{R}^{u}\times \mathbb{R}^{s}$ be a convex neighborhood of zero and
assume that $\phi :U\rightarrow \mathbb{R}^{u}\times \mathbb{R}^{s}$ is a $%
C^{1}$. If for $M>0$%
\begin{eqnarray*}
v &\geq &\sup_{z\in U}\left\{ \left\Vert \frac{\partial \phi _{y}}{\partial y%
}\left( z\right) \right\Vert +\frac{1}{L}\left\Vert \frac{\partial \phi _{y}%
}{\partial x}\left( z\right) \right\Vert \right\} , \\
\zeta  &\leq &m\left[ \frac{\partial \phi _{x}}{\partial x}(U)\right]
-L\sup_{z\in U}\left\Vert \frac{\partial \phi _{x}}{\partial y}\left(
z\right) \right\Vert ,
\end{eqnarray*}%
and 
\begin{equation*}
\frac{\zeta }{v}>1,
\end{equation*}%
then for $z\in U$%
\begin{equation}
\phi \left( J_{u}\left( z,L\right) \cap U\right) \subset \mathrm{int}%
J_{u}\left( \phi \left( z\right) ,L\right) \cup \left\{ \phi \left( z\right)
\right\} .  \label{eq:cone-preservation}
\end{equation}
\end{theorem}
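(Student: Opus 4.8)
The plan is to estimate the two ``halves'' of $\phi(w)-\phi(z)$ separately for an arbitrary $w\in J_u(z,L)\cap U$, using the convexity of $U$ to write $\phi(w)-\phi(z)$ as an integral of $D\phi$ along the segment $[z,w]$, and then to combine these estimates with the cone inequality. First I fix $z\in U$ and $w\in J_u(z,L)\cap U$ with $w\ne z$ and set $\Delta=w-z$; the cone condition reads $\|\pi_y\Delta\|\le L\|\pi_x\Delta\|$, and I note that $\pi_x\Delta\ne 0$, since $\pi_x\Delta=0$ would force $\pi_y\Delta=0$, i.e.\ $w=z$. By convexity of $U$ the segment $\{z+t\Delta:t\in[0,1]\}$ lies in $U$, hence
\[
\phi(w)-\phi(z)=\int_0^1 D\phi(z+t\Delta)\,\Delta\;dt .
\]

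For the $y$-component I bound the integrand block by block and insert $\|\pi_y\Delta\|\le L\|\pi_x\Delta\|$, using the elementary bound $\bigl\|\frac{\partial\phi_y}{\partial x}(z')\bigr\|+L\bigl\|\frac{\partial\phi_y}{\partial y}(z')\bigr\|=L\bigl(\frac{1}{L}\bigl\|\frac{\partial\phi_y}{\partial x}(z')\bigr\|+\bigl\|\frac{\partial\phi_y}{\partial y}(z')\bigr\|\bigr)\le Lv$ valid for $z'\in U$. This gives $\|\pi_y(\phi(w)-\phi(z))\|\le Lv\,\|\pi_x\Delta\|$, and this part is routine.

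The key step is the lower bound on the $x$-component. Writing
\[
\pi_x(\phi(w)-\phi(z))=\Bigl(\int_0^1\frac{\partial\phi_x}{\partial x}(z+t\Delta)\,dt\Bigr)\pi_x\Delta+\int_0^1\frac{\partial\phi_x}{\partial y}(z+t\Delta)\,\pi_y\Delta\;dt ,
\]
I observe that the averaged matrix $\int_0^1\frac{\partial\phi_x}{\partial x}(z+t\Delta)\,dt$ is a convex combination of values of $\frac{\partial\phi_x}{\partial x}$ at points of $U$, so each of its entries lies in the corresponding interval and the matrix itself belongs to the interval hull $\bigl[\frac{\partial\phi_x}{\partial x}(U)\bigr]$; therefore $m$ of it is at least $m\bigl[\frac{\partial\phi_x}{\partial x}(U)\bigr]$. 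Combining this with $\bigl\|\int_0^1\frac{\partial\phi_x}{\partial y}(z+t\Delta)\,\pi_y\Delta\;dt\bigr\|\le L\sup_{z'\in U}\bigl\|\frac{\partial\phi_x}{\partial y}(z')\bigr\|\,\|\pi_x\Delta\|$ yields $\|\pi_x(\phi(w)-\phi(z))\|\ge\zeta\,\|\pi_x\Delta\|$. I expect this passage from the averaged Jacobian block to the constant $m\bigl[\frac{\partial\phi_x}{\partial x}(U)\bigr]$, which relies on the convexity of $U$ together with that of the interval hull, to be the only point needing care.

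Finally I combine the two estimates. The hypothesis $\zeta/v>1$ presupposes $v>0$, hence $\zeta>v>0$, so $\|\pi_x(\phi(w)-\phi(z))\|\ge\zeta\|\pi_x\Delta\|>0$, and in particular $\phi(w)\ne\phi(z)$. Then
\[
\|\pi_y(\phi(w)-\phi(z))\|\le Lv\,\|\pi_x\Delta\|<L\zeta\,\|\pi_x\Delta\|\le L\,\|\pi_x(\phi(w)-\phi(z))\| ,
\]
which is exactly the statement that $\phi(w)$ lies in the interior of $J_u(\phi(z),L)$. The remaining case $w=z$ gives $\phi(w)=\phi(z)$, accounting for the term $\{\phi(z)\}$ in (\ref{eq:cone-preservation}), and the proof is complete.
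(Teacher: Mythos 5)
Your argument is correct. Note that the paper does not prove this statement itself: it is imported verbatim as Theorem 27 of \cite{CZnhim}, so there is no in-paper proof to compare against. Your proof is the standard one for such cone-alignment lemmas: write $\phi(w)-\phi(z)=\int_0^1 D\phi(z+t\Delta)\,\Delta\,dt$ using convexity, bound the $y$-block from above by $Lv\|\pi_x\Delta\|$, bound the $x$-block from below by $\zeta\|\pi_x\Delta\|$ via the observation that the averaged Jacobian block lies in the interval hull $\left[\frac{\partial\phi_x}{\partial x}(U)\right]$ (which is exactly what the paper's definition of $m[\cdot]$ is designed for), and conclude from $\zeta>v>0$; the only cosmetic point is that the paper's $M$ in the hypothesis is a typo for $L$, which you have implicitly and correctly resolved.
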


Note that if the vector field (\ref{eq:ode-wu-ws}) satisfies condition $%
\overrightarrow{\mu }<0<\overrightarrow{\xi }$, then by Theorem \ref%
{th:coeff-ode-map}, for sufficiently small $h>0,$ and for $\mu \left(
h\right) $ and $\xi \left( h\right) $ defined in (\ref{eq:mu-h}--\ref%
{eq:xi-h}), the 
\begin{equation*}
v=\mu \left( h\right) ,\qquad \qquad \zeta =\xi \left( h\right) ,
\end{equation*}%
will satisfy the assumptions of Theorem \ref{th:cone-alignment-maps} for $M=L
$. This will imply (\ref{eq:cone-preservation}) for $\phi =\Phi _{h}$, for
the flow $\Phi $ induced by (\ref{eq:ode-wu-ws}).

We now give the sketch of the proof of Theorem \ref{th:Wu-cap-bound}.

\begin{proof}[Proof of Theorem \protect\ref{th:Wu-cap-bound}]
The proof of the theorem follows from a mirror argument to the proof of
Theorem 30 from \cite{CZmelnikov}. The one important difference is that the
result from \cite{CZmelnikov} is written in the context where in addition to
the hyperbolic directions $x,y$, we also have a center coordinate. Here such
coordinate does not exist, which allows us to obtain better bounds on the
slope of the established manifold. Also, due to the lack of the center
coordinate, we have less inequalities in the assumptions of our theorem
compared to \cite{CZmelnikov}. Instead of repeating the proof of Theorem 30
from \cite{CZmelnikov} we refer the reader to the source, and will focus
here on the Lipschitz bounds of the manifold, which is the improvement of
the current result over \cite{CZmelnikov}.

The proof of Theorem 30 from \cite{CZmelnikov} follows from a graph
transform method \cite{CZnhim}. We start with a flat function 
\begin{equation}
\overline{B}_{u}\left( R\right) \ni x\rightarrow 0\in \overline{B}_{s}\left(
R\right) ,  \label{eq:initial-hor-disc}
\end{equation}%
and propagate it using the graph transform method. The manifold $W^{u}$ is
obtained by passing to the limit. Theorem 30 from \cite{CZmelnikov} ensures
that $W^{u}$ is a graph of the function $w^{u}$, meaning that we have (\ref%
{eq:Wu-as-graph}). (We refer the reader to \cite{CZmelnikov} for the proof
of this procedure that is just outline here.) We will focus here on the
Lipschitz bounds obtained at the limit. From \cite{CZmelnikov} we would
obtain directly the Lipschitz bound $1/L$, but here we will show that the
bound (due to lack of the center coordinate) is in fact $L$.

The important issue is that (\ref{eq:rate-cond}) ensures that the
assumptions of Theorem \ref{th:cone-alignment-maps} are satisfied (with $M=L$%
,) for the map $z\rightarrow \Phi _{h}\left( z\right) ,$ by taking $v=\mu
\left( h,L\right) $, $\zeta =\xi \left( h,L\right) $ and sufficiently small $%
h>0$. This means that the cones $J_{u}$ are preserved along the flow in the
sense of (\ref{eq:cone-preservation}); i.e. that 
\begin{equation*}
\Phi _{h}\left( J_{u}\left( z,M\right) \cap U\right) \subset \mathrm{int}%
J_{u}\left( \Phi _{h}\left( z\right) ,M\right) \cup \left\{ \Phi _{h}\left(
z\right) \right\} .
\end{equation*}%
For any $x\in \overline{B}_{u}\left( R\right) $ the graph of (\ref%
{eq:initial-hor-disc}) is inside of the cone $J_{u}\left( \left( x,0\right)
,L\right) $. The graph of (\ref{eq:initial-hor-disc}) after propagating by $%
\Phi _{h}$ using the graph transform, will also be contained in cones $J_{u}$%
. This implies that, after passing to the limit with the graph transform,
for any $x_{1},x_{2}\in \overline{B}_{u}\left( R\right) $%
\begin{equation*}
\left( x_{1},w^{u}\left( x_{1}\right) \right) \in J_{u}\left( \left(
x_{2},w^{u}\left( x_{2}\right) \right) ,L\right) ,
\end{equation*}%
hence%
\begin{equation*}
\left\Vert w^{u}\left( x_{2}\right) -w^{u}\left( x_{1}\right) \right\Vert
\leq L\left\Vert x_{1}-x_{2}\right\Vert .
\end{equation*}%
This means that $w^{u}$ is Lipschitz with constant $L$, as required.
\end{proof}

\bibliography{bibl}

\end{document}